\documentclass{amsart}

\usepackage{pst-plot}
\usepackage{fullpage, graphicx} 
\usepackage{hyperref} 
\usepackage{amsmath, amssymb, amsthm}
\usepackage{xcolor} 
\usepackage{enumitem}
\usepackage{thmtools}
\usepackage{multirow}
\usepackage{booktabs}

\numberwithin{equation}{subsection}

\frenchspacing
\hypersetup{colorlinks=true, urlcolor=purple, citecolor=blue, linkcolor=red, pdfstartview={XYZ null null 0.90}}

\DeclareMathOperator{\SL}{SL}
\DeclareMathOperator{\GL}{GL}

\newcommand{\Mod}[1]{\ (\mathrm{mod}\ #1)}

\newcommand{\RR}{\mathbb{R}}
\newcommand{\ZZ}{\mathbb{Z}}
\newcommand{\PP}{\mathbb{P}}
\newcommand{\kron}[2]{\left(\frac{#1}{#2}\right)}
\newcommand{\sm}[4]{\ensuremath{\left(\begin{smallmatrix} #1 & #2\\#3 & #4\end{smallmatrix}\right)}}
\newcommand{\lm}[4]{\ensuremath{\left(\begin{matrix} #1 & #2\\#3 & #4\end{matrix}\right)}}
\newcommand{\genmtx}{\sm{a}{b}{c}{d}}

\newcommand{\smcol}[2]{\ensuremath{\left(\begin{smallmatrix}#1\\#2\end{smallmatrix}\right)}}
\newcommand{\lmcol}[2]{\ensuremath{\left(\begin{matrix}#1\\#2\end{matrix}\right)}}
\newcommand{\opart}[1]{\ensuremath{#1^{\text{o}}}}

\newtheorem{theorem}{Theorem}[section]
\newtheorem{conjecture}[theorem]{Conjecture}
\newtheorem{corollary}[theorem]{Corollary}
\newtheorem{lemma}[theorem]{Lemma}
\newtheorem{proposition}[theorem]{Proposition}
\newtheorem*{zaremba}{Zaremba's Conjecture}
\newtheorem*{kontorovichLG}{Local-Global Conjecture for Continued Fraction Alphabets}

\theoremstyle{definition}

\newtheorem{definition}[theorem]{Definition}

\newtheorem{remark}[theorem]{Remark}

\newtheorem{question}[theorem]{Question}

\begin{document}

\title{Reciprocity obstructions in semigroup orbits in $\SL(2, \ZZ)$}

\author{James Rickards}
\address{Saint Mary's University, Halifax, Nova Scotia, Canada}
\email{james.rickards@smu.ca}
\urladdr{https://jamesrickards-canada.github.io/}

\author{Katherine E. Stange}
\address{University of Colorado Boulder, Boulder, Colorado, USA}
\email{kstange@math.colorado.edu}
\urladdr{https://math.katestange.net/}

\date{\today}

\thanks{Rickards and Stange were supported by NSF-CAREER CNS-1652238 (PI Katherine E. Stange); Stange was also supported by NSF DMS-2401580.  This work utilized the Alpine high performance computing resource at the University of Colorado Boulder. Alpine is jointly funded by the University of Colorado Boulder, the University of Colorado Anschutz, and Colorado State University.}

\keywords{Thin semigroup, thin group, reciprocity obstruction, continued fraction, Zaremba's conjecture, local-global conjecture, Hausdorff dimension}

\subjclass{Primary:  11A15, 14G12, 11J70.  Secondary:  11A55.}

\begin{abstract}
	We study orbits of semigroups of $\text{SL}(2,\mathbb{Z})$, and demonstrate \emph{reciprocity obstructions}:  we show that certain such orbits avoid squares, but not as a consequence of obstructions inherited from an algebraic set, and not as a consequence of congruence obstructions.  This is in analogy to the reciprocity obstructions recently used to disprove the Apollonian local-global conjecture.  We give an example of such an orbit which is known exactly, and misses all squares together with an explicit finite list of sporadic values: the corresponding semigroup is not thin, but is dense in an algebraic variety that does not have such obstructions.  We also demonstrate thin semigroups with reciprocity obstructions, including semigroups associated to continued fractions formed from finite alphabets.  Zaremba's conjecture states that for continued fractions with coefficients chosen from $\{1,\ldots,5\}$, every positive integer appears as a denominator.  Bourgain and Kontorovich proposed a generalization of Zaremba's conjecture in the context of semigroups associated to finite alphabets.  We disprove their conjecture.  In particular, we demonstrate classes of finite continued fraction expansions which never represent rationals with square denominator, but not as a consequence of congruence obstructions, and for which the limit set has Hausdorff dimension exceeding $1/2$.  An example of such a class is continued fractions of the form $[0; a_1, a_2, \ldots, a_n,1,1,2]$, where the $a_i$ are chosen from the set $\{4,8,12,\ldots,128\}$.   The object at the heart of these results is a semigroup $\Psi\subseteq\Gamma_1(4)$ which preserves Kronecker symbols.
\end{abstract}

\maketitle

\setcounter{tocdepth}{1}
\tableofcontents

\section{Introduction}

Let $\Gamma\subseteq\SL(d, \ZZ)$ be a multiplicative semigroup, $v\in\ZZ^d$ a primitive vector, and $L:\ZZ^d\rightarrow\ZZ$ a primitive linear functional. We call $\Gamma v$ a semigroup orbit and study the set $L(\Gamma v) \subseteq \ZZ$ of integers it represents.  Of particular interest are \emph{thin orbits}, which grow slowly, arising from semigroups which are too `small' to inherit their behaviour from their Zariski closure.  In analogy to the study of integer and rational points in Diophantine geometry, we study the arithmetic of these orbits.  
Observed obstructions to integers $n$ appearing in $L(\Gamma v)$ for $\Gamma$ a thin semigroup could be classified as follows:
\begin{enumerate}
    \item \textbf{Definiteness:} the orbit may only represent integers above or below a certain cutoff.  This occurs for Apollonian circle packings; see also \cite[Section 1.2]{BourgainKontorovichGAFA2010} for an example in $\SL(2,\ZZ)$.
    \item \textbf{Counting:} the number of vectors in an orbit of size up to $N$ is typically asymptotic to $cN^{\nu}$ for some positive constants $c, \nu$; for $\nu < 1$ it is not possible to represent all integers.  See \cite{ApolloniusZaremba} for many examples.
    \item \textbf{Congruence:} numbers from a congruence class $a\pmod{m}$ may not be represented by the orbit.  In the case of Apollonian circle packings, $m=24$ describes such obstructions.  See \cite[pp. 2-3]{BourgainKontorovichZarembaAnnals} for an example in $\GL(2,\ZZ)$.
    \item \textbf{Inherited:}  we include in this category any obstruction which exists for the integral or rational points of an algebraic set containing the orbit.  For example, the Zariski closure may be an algebraic group with obstructions inherited by its subgroups and subsemigroups; an example with Brauer-Manin obstruction is mentioned in \cite[p. 5]{KontorovichZhang}.  Variations on this theme include integral Brauer-Manin obstructions such as spinor exceptions for quadratic forms \cite{SP}.  In this paper, the Zariski closure is $\SL(2)$, and our orbits lie in the congruence subgroup $\Gamma_1(4)$, but our obstructions do not; see the discussion following Theorem~\ref{thm:TableForPsi}.
\end{enumerate}
Call an integer \emph{admissible} if it satisfies the definiteness condition and is not ruled out by any congruence condition for an orbit. If $\nu>1$, and there are no further inherited obstructions, then one might expect that every large enough admissible integer should appear.  This is called the \emph{local-global principle}.

This has been conjectured in many cases, including by Zaremba for continued fractions built out of certain finite alphabets \cite[p. 76]{Zaremba72}, later refined by Niederreiter in \cite{Niederreiter78}, and by Hensley in \cite{Hensley96}.  The conjecture is most commonly stated as follows. 

\begin{zaremba}
There exists $n > 0$ so that every positive integer is the denominator of a rational number with continued fraction expansion whose coefficients are at most $n$.
\end{zaremba}

Zaremba also conjectured that $n=5$ is sufficient, and Hensley gave evidence towards $n=2$ being sufficient for all sufficiently large integers. More generally, one might consider any finite alphabet $\mathcal{A}$ for the allowable coefficients, and ask whether the set of denominators contains all but finitely many positive integers.  This statement can be phrased as a local-global principle for the semigroup $\Gamma_\mathcal{A} := \langle \sm{0}{1}{1}{a} : a \in \mathcal{A} \rangle^+ \cap \SL(2,\ZZ)$ corresponding to the alphabet $\mathcal{A}$.  Write $\delta_\mathcal{A}$ for the Hausdorff dimension of the corresponding limit set.  Bourgain and Kontorovich conjectured the following in various forms\footnote{Kontorovich assures us the level of generality given here is that which was intended.}.

\begin{kontorovichLG}[{\cite[Conjecture 6.3.1]{KontorovichThin}, \cite[Conjecture 1.11]{BourgainKontorovichIV}]}]
    Let $\mathcal{A} \subseteq \mathbb{N}$, and consider a linear functional $L$. 
 Suppose $\Gamma_\mathcal{A}$ is Zariski dense in $\SL(2)$, has Hausdorff dimension $\delta_\mathcal{A} > 1/2$, and that $L(\Gamma_\mathcal{A} v)$ is infinite.  In terms of a growing parameter $X$, any admissible integer $n \asymp X$ has multiplicity $X^{2\delta_\mathcal{A}-1-o(1)}$ in $L(\Gamma_\mathcal{A} v)$.
\end{kontorovichLG}

For Apollonian circle packings, the local-global conjecture is due to Graham-Lagarias-Mallows-Wilks-Yan and Fuchs-Sanden in \cite{GLMWY02}, \cite{FS11}. Variants of the conjecture for generalizations of circle packings have been given by Zhang \cite{Zhang18} and jointly by Fuchs, Zhang, and the second author \cite{FSZ19}. 

In most cases, the best known result is that a density one of admissible integers appear.  For Zaremba's conjecture, this is due to Bourgain-Kontorovich \cite{BourgainKontorovichZarembaAnnals} and later Frolenkov and Kan \cite{FrolenkovKan}, Huang \cite{Huang15} and Kan \cite{Kan}; for Apollonian circle packings this is due to Bourgain-Kontorovich
\cite{BK14} and later extended by Zhang \cite{Zhang18} and jointly by Fuchs, Zhang, and the second author \cite{FSZ19} for certain families of Kleinian circle packings.  The only proof of the full conjecture is by Kontorovich for Soddy sphere packings \cite{KontorovichSoddy}.  For an introduction, see \cite{ApolloniusZaremba}.

Recently, the local-global conjecture was proven false for many Apollonian circle packings by the authors, in collaboration with Haag and Kertzer \cite{HKRS23}, by identifying \emph{reciprocity obstructions}.  These describe infinite families of admissible integers that do not appear.  The culprit is a reciprocity law, specifically quadratic and quartic reciprocity.

We expect these reciprocity obstructions to exist for many other (thin) (semi)groups. In this paper, we study subsemigroups of $\SL(2, \ZZ)$ and exhibit reciprocity obstructions.  We consider more specifically semigroups whose matrix entries are non-negative, to which we can associate limit sets consisting of real numbers with continued fraction expansions satisfying certain restrictions.  One might refer to these as \emph{continued fraction semigroups}.  

In particular, we disprove the Local-Global Conjecture for Continued Fraction Alphabets.   Precisely, we exhibit reciprocity obstructions for the linear functional $L(x,y) = 3x + 5y$ and the alphabet $\mathcal{A} = \{4,8,12,\ldots,128\}$.  An equivalent way to state this is that rational numbers with continued fractions of the form $[0; a_1, a_2, \ldots, a_n,1,1,2]$, where the $a_i \in \mathcal{A}$, cannot have square denominators, despite squares not being ruled out by congruence, and despite the average multiplicity of denominators tending to infinity.  We do not disprove Zaremba's conjecture itself.

In our context, the notion of \emph{thin} typically means that the semigroup itself or its orbits grow more slowly than those of the Zariski closure.  A precise relationship between orbit growth and Hausdorff dimension is due to Hensley in the case of a semigroup $\Gamma_\mathcal{A} \subseteq\GL(2, \ZZ)$ associated to the collection of continued fractions whose coefficients are chosen from a finite alphabet $\mathcal{A}$.  In this case, Hensley \cite{Hensley90} shows that
\begin{equation}
    \label{eqn:hensley}
\# \{ w \in \Gamma_\mathcal{A} v : ||w||_\infty < N \} \sim C_{\mathcal{A}} N^{2\delta_\mathcal{A}},
\end{equation}
where $\delta_\mathcal{A}$ is the Hausdorff dimension of the associated limit set.  The existence of a local-global principle is a sensible question for such semigroups with Hausdorff dimension exceeding $1/2$, since in this case the average multiplicity of a represented integer tends to $\infty$ as $N\rightarrow\infty$. Although Hensley's result is not proven for all continued fraction semigroups, we choose to state our results in terms of Hausdorff dimension.

A very brief summary of results is contained in the abstract, and the following section contains more detailed statements.

\subsection*{Software}  Computational methods were developed and implemented in C and PARI/GP \cite{PARI} to investigate orbits of semigroups. This package \cite{GHSemigroup} is publicly available on GitHub, and also includes methods to computer-verify many of the claims in this paper (in particular, the function \texttt{runalltests}).

\subsection*{Acknowledgements} The authors would like to thank many people for helpful conversations, among them Alex Kontorovich, Carlos Matheus, Carlo Pagano, Mark Pollicott, Uri Shapira, Julia Slipantschuk, and Bianca Viray.  They are grateful to the referees for their insightful comments.  They are especially indebted to Mark Pollicott and Julia Slipantschuk for generously providing code for computing Hausdorff dimensions.  The authors are also grateful to the Centre international de rencontres math\'ematiques (CIRM) and its 2023 Semester II Chaire Jean-Morlet, Jayadev Athreya, for the opportunities for many of these conversations to take place.

\section{Statements of results}
\label{sec:results}

For a rational number $q$, its continued fraction expansion is
\[
q = [a_0; a_1, a_2, \ldots, a_n] 
:= a_0 + \cfrac{1}{a_1 +  \cfrac{1}{\ddots+\cfrac{1}{a_n}}},
\]
where $a_0\in\ZZ$, $a_i\in\ZZ^+$ for $1\leq i\leq n$. This expression is not unique: instead, there are exactly two ways to express any rational number as a continued fraction. Indeed, if $n=1$ or $a_n>1$, then
\[[a_0;a_1, a_2, \ldots, a_n] = [a_0; a_1, a_2, \ldots, a_n-1, 1].\]
In particular, each rational number has a unique continued fraction with an even length (i.e. with $n$ odd). Call this the \emph{even continued fraction} associated to $q$. By convention, we also permit $\frac{1}{0}=[0;0]$.

If $\Gamma\subseteq \SL(2, \ZZ)$, define $\Gamma^{\geq 0}:=\{\genmtx\in \Gamma:a,b,c,d\geq 0\}$. If $\Gamma$ is a group, then $\Gamma^{\geq 0}$ is a semigroup (in fact, it is a monoid, but the distinction is irrelevant in this paper).

Continued fractions of even type can be described\footnote{The notational choice of $L$ and $R$ corresponds to these matrices acting as ``going left'' or ``going right'' on Conway's topograph \cite{ConwaySensual, HatcherTopologyofNumbers} or Series' Farey tesselation \cite{SeriesModularSurfaceContinuedFractions}.} in terms of the semigroup:
\[
\SL(2, \ZZ)^{\ge 0} = \left\langle
\begin{pmatrix} 1 & 1 \\ 0 & 1 \end{pmatrix} 
,
\begin{pmatrix} 1 & 0 \\ 1 & 1 \end{pmatrix}
\right\rangle^+
=: \langle L, R \rangle^+.
\]
The notation $\langle A_1,A_2,\ldots \rangle^{+}$ indicates the monoid multiplicatively generated by the $A_i$, together with the identity $I$. If $\frac{x}{y}$ is a positive rational number in reduced terms, then
\begin{equation}\label{eqn:evencontfrac}
    \frac{x}{y} = [a_0; a_1, \ldots, a_{2n-1}] \Leftrightarrow \lmcol{x}{y}=L^{a_0}R^{a_1}L^{a_2}R^{a_3}\cdots L^{a_{2n-2}}R^{a_{2n-1}}\lmcol{1}{0}.
\end{equation}

We will discuss subsemigroups $\Gamma \subseteq \SL(2, \ZZ)^{\ge 0}$ and their orbits $\Gamma\smcol{x}{y}$. If $\smcol{u}{v}\in \Gamma\smcol{x}{y}$, call $u$ the numerator and $v$ the denominator.  The orbit of any such subsemigroup can therefore be interpreted as the collection of rational numbers whose continued fraction expansions are restricted in some way. Thus all of the results below have an interpretation in terms of continued fractions; we return to this language towards the end of the section.

Let $\kron{x}{y}$ denote the Kronecker symbol, which is defined for any pair of integers $x,y$; recall that it is $\pm 1$ if $x$ and $y$ are coprime, and $0$ otherwise.  If $\kron{x}{y} = -1$, then neither $x$ nor $y$ is a square.  
The starting point for our results is the observation that certain linear fractional transformations preserve Kronecker symbol.  For example, if $y$ is odd, the transformations $(x,y) \mapsto (x+4y,y)$ and $(x,y) \mapsto (x,4x+y)$ satisfy
\[
\kron{x}{y} = \kron{x+4y}{y} = \kron{x}{4x+y}
\]
by quadratic reciprocity.  As a consequence, the Kronecker symbol $\kron{x}{y}$ is constant on orbits of the semigroup generated by these two transformations.  Thus we have semigroup orbits such as
\[
\left\langle 
\begin{pmatrix} 1 & 0 \\ 4 & 1 \end{pmatrix},
\begin{pmatrix} 1 & 4 \\ 0 & 1 \end{pmatrix}
\right\rangle^+ \begin{pmatrix} 3 \\ 5 \end{pmatrix},
\]
which cannot support squares in either entry.  That squares are not possible in the first entry is clear a priori, as it is always $3 \pmod{4}$.  But it is possible to show that no congruence obstruction can account for the lack of squares in the second position.

Recall that the congruence subgroup $\Gamma_1(4)$ is defined as
\[
\Gamma_1(4) = \left\{ \genmtx  \in \SL(2,\ZZ) :\;  a\equiv d\equiv 1\Mod{4},\; c\equiv 0\Mod{4}\right\}.
\]
The most important subsemigroup we consider is the maximal semigroup which preserves the Kronecker symbol on rationals of odd denominator.

\begin{definition}
    Let $\Psi\subseteq \Gamma_1(4)^{\geq 0}$ be the set defined by
    \[\Psi:=\left\{\genmtx\in\Gamma_1(4)^{\geq 0}:\forall\; x,y\in\ZZ^{\geq 0}\text{ with $y$ odd},\text{ we have }\kron{x}{y}=\kron{ax+by}{cx+dy}\right\}.\]
\end{definition}

As the action of $\Gamma_1(4)^{\geq 0}$ preserves the parity of $y$ and nonnegativity of $x$ and $y$, $\Psi$ is a semigroup. 

It turns out that $\Psi$ has a nice characterization, which demonstrates that not only is it non-trivial, but it is in some sense half of $\Gamma_1(4)^{\geq 0}$.

\begin{proposition}[Proof in Section~\ref{sec:kronpres}]\label{prop:psicharacterization}
    The semigroup $\Psi$ is characterized as
    \[\Psi=\left\{\genmtx\in\Gamma_1(4)^{\geq 0}:\kron{a}{b}=1\right\}.\]
\end{proposition}

Besides the well-known observation that computation of Kronecker symbols can be accomplished as a version of the Euclidean algorithm, we are not aware of similar or related statements in the literature.  The only one the authors have been able to locate is \cite[Proposition 2]{Milovic}, which can be interpreted as giving a specific element of $\Psi$. As a statement about continued fraction expansions, we obtain a striking corollary concerning rationals of ``positive Kronecker symbol'':  the concatenation of their continued fraction expansions is again a rational of positive Kronecker symbol.  This is detailed in Corollary~\ref{cor:QR}.  

\begin{definition}
    We say that an orbit $\Gamma\smcol{x}{y}$ of a subsemigroup $\Gamma \subseteq \SL(2, \ZZ)$ exhibits a \emph{congruence obstruction modulo $n$} if the set of residues of numerators (or, respectively, denominators) modulo $n$ is not the set of all residues modulo $n$. If this happens for at least one orbit, we say $\Gamma$ exhibits congruence obstructions.
\end{definition}

The semigroup $\Psi$ has congruence obstructions modulo $4$:  for example, all numerators of $\Psi \smcol{1}{0}$ are $1 \pmod{4}$, and all denominators are $0 \pmod{4}$. On the other hand, numerators in the orbit $\Psi\smcol{2}{3}$ have no congruence obstructions. 

\begin{definition}
    We say that an orbit $\Gamma \smcol{x}{y}$ of a subsemigroup $\Gamma  \subseteq \SL(2, \ZZ)$ exhibits \emph{reciprocity obstructions} if the set $X$ of numerators (or, respectively, denominators) of $\Gamma \smcol{x}{y}$ satisfy
    \begin{enumerate}
        \item for all $n \ge 1$, the residues modulo $n$ of $X$ include squares; but
        \item $X$ contains no integer squares.
    \end{enumerate}
    If this happens for at least one orbit, we say $\Gamma$ exhibits reciprocity obstructions.
\end{definition}

In other words, a reciprocity obstruction is an obstruction to square values not induced by congruence obstructions.
In this paper we are restricting our attention to the numerator and denominator, but in another context, one might wish to extend the definitions of congruence and reciprocity obstructions to other linear functionals.  We also restrict our attention to squares for the present, but the future may bestow upon us other types of obstructions deserving of the name \emph{reciprocity} (as it did in the Apollonian case, \cite{HKRS23}).

Our main result about $\Psi$ is that it demonstrates reciprocity obstructions.

\begin{theorem}[Proof in Section~\ref{sec:psi-few}]\label{thm:semigroupreciprocity}
	Let $x, y$ be nonnegative coprime integers which satisfy one of the following conditions:
	\begin{itemize}
		\item $y$ is odd and $\kron{x}{y}=-1$;
		\item $(x, y)\equiv (1, 0)\pmod{4}$ and $\kron{x}{y}=-1$;
		\item $(x, y)\equiv (3, 0)\pmod{4}$, and $\kron{x}{y}=-\kron{-1}{y}$.
	\end{itemize}
	Then the numerators and denominators of the orbit $\Psi\smcol{x}{y}$ cannot be squares.
\end{theorem}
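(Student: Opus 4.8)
The plan is to attach to every primitive nonnegative column vector $\smcol{x}{y}$ (outside the case $y\equiv 2\Mod 4$, which all three hypotheses exclude) a sign $\chi(x,y)\in\{\pm1\}$ that is constant along $\Psi$-orbits and equals $-1$ in exactly the three listed cases, and then to observe that a perfect-square numerator or denominator is incompatible with that sign. Concretely, set
\[
\chi(x,y):=\begin{cases}\kron{x}{y}&\text{if $y$ is odd,}\\ \kron{y}{x}&\text{if $y\equiv 0\Mod 4$ (so $x$ is odd).}\end{cases}
\]
Every $\genmtx\in\Gamma_1(4)$ has $c\equiv 0$ and $d\equiv 1\Mod 4$, so if $\smcol{u}{v}=\genmtx\smcol{x}{y}$ then $v\equiv y\Mod 4$; hence the three regimes ($y$ odd, $y\equiv 0\Mod 4$, $y\equiv 2\Mod 4$) are $\Psi$-invariant, and $\chi$ is defined on all of any orbit on which it is defined at the starting vector. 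The three hypotheses are precisely the cases $\chi(x,y)=-1$: the first bullet by definition; for the other two, $x$ is odd, and writing $y=2^{e}m$ with $m$ odd and $e\geq 2$, the identity $\kron{2}{x}=\kron{x}{2}$ (for odd $x$) and quadratic reciprocity for the coprime odd pair $x,m$ give
\[
\kron{y}{x}=\kron{x}{2}^{e}\kron{m}{x}=\kron{x}{y}\cdot(-1)^{\frac{x-1}{2}\frac{m-1}{2}},
\]
which equals $\kron{x}{y}$ when $x\equiv 1\Mod 4$ and equals $\kron{x}{y}\kron{-1}{y}$ when $x\equiv 3\Mod 4$ (using $\kron{-1}{2}=1$, so $(-1)^{\frac{m-1}{2}}=\kron{-1}{m}=\kron{-1}{y}$); since $\kron{-1}{y}=\pm1$, the condition $\kron{x}{y}=-\kron{-1}{y}$ is exactly $\kron{y}{x}=-1$.

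Granting the $\Psi$-invariance of $\chi$, the theorem follows quickly. Take $\smcol{u}{v}=M\smcol{x}{y}$ with $M\in\Psi$; then $u,v$ are nonnegative and coprime (as $\det M=1$), and by invariance $\chi(u,v)=\chi(x,y)=-1$. If $y$ is odd then $v$ is odd and $\kron{u}{v}=-1$; but a square numerator $u=s^{2}$ gives $\kron{u}{v}=\kron{s^{2}}{v}=\kron{s}{v}^{2}\in\{0,1\}$, and a square denominator $v=t^{2}$ gives $\kron{u}{v}=\kron{u}{t}^{2}\in\{0,1\}$, each contradicting $\kron{u}{v}=-1$. If $y\equiv 0\Mod 4$ then $v\equiv 0\Mod 4$, $u$ is odd, and $\kron{v}{u}=-1$; a square $u=s^{2}$ gives $\kron{v}{u}=\kron{v}{s}^{2}\in\{0,1\}$ and a square $v=t^{2}$ gives $\kron{v}{u}=\kron{t}{u}^{2}\in\{0,1\}$, again contradictions. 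This uses only that the Kronecker symbol is completely multiplicative in each argument and takes values in $\{0,\pm1\}$.

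Everything therefore reduces to the $\Psi$-invariance of $\chi$, and the step I expect to be the main obstacle is this invariance on vectors with $y\equiv 0\Mod 4$. For $y$ odd, invariance of $\chi=\kron{x}{y}$ is the defining property of $\Psi$, so nothing is needed. The genuine content is: for every $\genmtx\in\Psi$ and every coprime nonnegative $x,y$ with $y\equiv 0\Mod 4$ (so $x$ odd), show
\[
\kron{cx+dy}{ax+by}=\kron{y}{x}.
\]
One route is to reduce this to the odd-denominator case: applying the known identity $\kron{x'}{y'}=\kron{ax'+by'}{cx'+dy'}$ (valid for odd $y'$) to the swapped pair $x'=y$, $y'=x$ gives $\kron{y}{x}=\kron{ay+bx}{cy+dx}$, so it suffices to prove $\kron{ay+bx}{cy+dx}=\kron{cx+dy}{ax+by}$; the four entries here are values of binary quadratic forms in $x,y$ constrained by $ad-bc=1$ (for instance $(cy+dx)(ax+by)-(cx+dy)(ay+bx)=x^{2}-y^{2}$), and one would try to extract the equality of Kronecker symbols from quadratic reciprocity together with careful bookkeeping of $2$-adic valuations and sign factors — the delicate part. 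A second, more computational route is to fix an explicit monoid generating set for $\Psi$ — which the characterization $\Psi=\{\genmtx\in\Gamma_1(4)^{\geq 0}:\kron{a}{b}=1\}$ should make available — and verify the displayed identity on each generator; for a fixed generator the identity depends only on $(x,y)$ modulo a fixed modulus (dictated by the generator's entries and the conductors of the Kronecker symbols that appear), so it reduces to a finite, readily automated check. Either way the argument must genuinely invoke the condition $\kron{a}{b}=1$: $\Psi$ is a proper sub-semigroup of $\Gamma_1(4)^{\geq 0}$ that does not preserve $\chi$ on all of $\Gamma_1(4)^{\geq 0}$, and it is precisely the interaction between this condition on the columns of $M$ and the even-denominator behaviour of the Kronecker symbol that makes this the crux.
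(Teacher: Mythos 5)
Your reduction is a clean reformulation: define $\chi(x,y) = \kron{x}{y}$ for odd $y$ and $\chi(x,y) = \kron{y}{x}$ for $y \equiv 0 \Mod{4}$, check that $\chi = -1$ exactly in the three listed cases (that computation is correct), and note that a square numerator or denominator forces $\chi = 0$ or $1$. The high-level structure matches the paper in spirit (the paper also evaluates a single Kronecker symbol of the output column and observes a square is incompatible with $-1$), and for the first two bullets your argument and the paper's are essentially the same, both resting on the identity $\kron{ax+by}{cx+dy} = \kron{a}{b}\kron{x}{y}$.

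However, you leave the crux unproved. You explicitly flag that the ``genuine content'' is the $\Psi$-invariance of $\chi$ for vectors with $y \equiv 0 \Mod{4}$, and then offer two candidate routes without carrying either out. That invariance is exactly the nontrivial part: it is (a strengthening of) the paper's Lemma~\ref{lem:kronpreserved}, whose proof is a substantial computation going through the fully general Proposition~\ref{prop:mobiuskron}, with careful tracking of $2$-adic factors and a reduction to the case $\gcd(x,d)=1$ via Lemma~\ref{lem:krondenomshift}. Merely observing that $(cy+dx)(ax+by)-(cx+dy)(ay+bx)=x^2-y^2$ and gesturing at ``bookkeeping'' does not discharge this; the bookkeeping \emph{is} the proof. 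So the proposal reduces the theorem to an unproven lemma that constitutes the bulk of the paper's Section~\ref{sec:kronpres}.

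Your second sketched route also has a concrete flaw: you propose verifying the invariance identity on ``an explicit monoid generating set for $\Psi$,'' reducing to a finite check per generator. But $\Psi$ is not finitely generated (Proposition~\ref{prop:psidim1}, with explicit witnesses in Lemma~\ref{lemma:psi-infinite}), and the moduli at which the check must be carried out grow with the generator's entries, so this is not a finite verification. The paper sidesteps full invariance in the $(3,0)$ case with a cheaper conditional argument: the numerator is $\equiv 3 \Mod 4$ and hence never a square by congruence alone, and \emph{if} the denominator $cx+dy$ were a square then $\opart{(cx+dy)} \equiv 1 \Mod 4$, which collapses the sign $(-1)^\alpha$ in Lemma~\ref{lem:kronpreserved} to $\kron{-1}{y}$ and yields $\kron{ax+by}{cx+dy} = -1$, contradicting squareness. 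You may find it easier to adopt that conditional trick than to establish unconditional $\chi$-invariance in the $(3,0)$ regime, but either way you still need the even-denominator form of Lemma~\ref{lem:kronpreserved}, which you have not supplied.
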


To conclude that certain orbits have reciprocity obstructions, it is necessary to classify the congruence obstructions for the orbit and check that they do not themselves already rule out squares; this happens sometimes, but not always.

\begin{theorem}[Proof in Section~\ref{sec:proof26}]
\label{thm:TableForPsi}
    Orbits of the semigroup $\Psi$ exhibit the congruence and reciprocity obstructions listed in Table~\ref{table:psi1obstructions}, and furthermore, the table represents a complete list of such. The result is also effective: let $X$ denote the set of numerators in the orbit $\Psi\smcol{x}{y}$ (respectively, denominators), and let $r$ denote the unique positive real root to the equation
    \[\frac{\sqrt{r}}{\log{r}\log{\log{r}}}= 7.542795 xy.\]
    Let $n$ be any integer that satisfies the corresponding congruence condition from Table~\ref{table:psi1obstructions}, and also satisfies either of the following conditions:
    \begin{itemize}
        \item $n\geq 2^k\max(r, 3.41\cdot 10^6)$ is not a square, where $2^k$ is the largest power of two at most $8xy$;
        \item $n\geq 8xy$ is a square, and there is no corresponding reciprocity obstruction listed in Table~\ref{table:psi1obstructions}.
    \end{itemize}
    Then $n$ is an element of $X$.
\end{theorem}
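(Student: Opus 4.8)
The plan is to deduce the theorem from three ingredients: the algebraic description of $\Psi$ (Theorem~\ref{thm:psicharacterization}), the reciprocity obstruction (Theorem~\ref{thm:semigroupreciprocity}), and one new analytic input — an effective count of admissible continued–fraction denominators carrying a prescribed Kronecker symbol. Since the completeness assertion about Table~\ref{table:psi1obstructions} will be a formal corollary of the effectivity clause, I treat that clause as the real content and the rest as structural bookkeeping.

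\emph{Step 1 (a closed form for the orbit).} Because $\Psi\subseteq\Gamma_1(4)^{\geq 0}$, each $\smcol{u}{v}$ in $\Psi\smcol{x}{y}$ lies in the $\Gamma_1(4)^{\geq 0}$-orbit of $\smcol{x}{y}$, which admits the usual description in terms of continued–fraction words: $\gcd(u,v)=1$, an inequality placing $u/v$ appropriately far along the Stern--Brocot order from $x/y$, and congruences such as $v\equiv y\pmod 4$ together with a numerator congruence modulo $2$ or $4$ according to $y\bmod 4$. Theorem~\ref{thm:psicharacterization} cuts $\Psi$ out of $\Gamma_1(4)^{\geq 0}$ by the single condition $\kron{a}{b}=1$; combining this with the fact that $\Psi$ preserves Kronecker symbols on odd denominators, one shows via a connectivity argument over the generators that the only further constraint on the orbit is the preserved symbol, so that in the $y$ odd case
\[
\Psi\smcol{x}{y}=\Big\{\smcol{u}{v}\in\Gamma_1(4)^{\geq 0}\smcol{x}{y}\;:\;\kron{u}{v}=\kron{x}{y}\Big\},
\]
with the analogous sign convention in the two even cases. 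The one delicate piece here is this ``only obstruction'' statement: a word with $\kron{a}{b}=-1$ must always be reroutable through $\Psi$ whenever the endpoint symbol matches $\kron{x}{y}$.

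\emph{Step 2 (the two obstruction types).} From Step 1 the residues modulo $n$ of the numerators (resp.\ denominators) are explicit, and the only unconditional constraints are the mod-$2$-power ones, which give exactly the congruence rows of Table~\ref{table:psi1obstructions}. For the reciprocity rows: in the three families of Theorem~\ref{thm:semigroupreciprocity} the quantity $\kron{u}{v}$ is constantly $-1$ over the orbit, so a square value $m^2$ coprime to its partner would force $\kron{m^2}{\cdot}=-1$, impossible; and this is genuinely not a congruence obstruction, since the single relation $\kron{u}{v}=\epsilon$ constrains neither $u\bmod n$ nor $v\bmod n$ alone. That the table is a \emph{complete} list — no further congruence or reciprocity obstruction — then follows formally from the effectivity clause of Step 3, because that clause places arbitrarily large elements of $X$ in every residue class not listed and produces square elements whenever no reciprocity obstruction is listed.

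\emph{Step 3 (effectivity — the core — and the main difficulty).} Fix $n$ meeting the congruence condition; by Step 1 it suffices, in the denominator case (the numerator case being parallel after reversing the continued–fraction word), to find $u$ with $\gcd(u,n)=1$, the prescribed residue modulo the relevant power of $2$, lying in the admissible range, and with $\kron{u}{n}=\epsilon$ — the range and residue constraints being controlled by the size of $\smcol{x}{y}$, which is why powers of $2$ up to $8xy$ and the product $xy$ enter the thresholds. When $n$ is not a square, $u\mapsto\kron{u}{n}$ is a nontrivial real character, so the required $u$ exists in the admissible interval once $n$ is large, and an explicit character–sum estimate (of P\'olya--Vinogradov/Burgess type, written out with constants) converts ``large'' into precisely $n\geq 2^k\max(r,3.41\cdot 10^6)$ with $r$ as in the statement; the shape $\sqrt r/(\log r\log\log r)=7.542795\,xy$ is exactly what that estimate returns. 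When $n=m^2$ is a square with no reciprocity obstruction, $\epsilon\in\{0,1\}$ is forced and one builds the representation directly, for which the weaker bound $n\geq 8xy$ suffices; the finitely many residual small cases $n<3.41\cdot 10^6$ are closed by direct machine verification with the accompanying software. The crux is this step: although $\Psi$ is algebraically benign (full Hausdorff dimension, hence not thin, so surjectivity modulo the two obstruction types is to be expected), turning that expectation into an explicit threshold demands simultaneous control of which continued–fraction words land in $\Psi$ (the combinatorics of $\Gamma_1(4)^{\geq 0}$ and of positivity) and of an explicit character–sum bound governing the Kronecker condition, followed by careful tracking of the dependence on $x$ and $y$; the secondary difficulty is the connectivity claim of Step 1, namely that $\kron{a}{b}=1$ is the \emph{only} obstruction beyond those already present in $\Gamma_1(4)^{\geq 0}$.
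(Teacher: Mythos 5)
Your overall architecture is reasonable (Kronecker characterization of $\Psi$ $\to$ obstructions $\to$ effectivity via a character-sum bound $\to$ completeness as a corollary), and it matches the paper at the level of outline. But the heart of the theorem --- your Step~3 --- is where the paper does essentially all of its real work, and your sketch leaves a genuine gap there.

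You assert that ``it suffices $\ldots$ to find $u$ with $\gcd(u,n)=1$, the prescribed residue modulo the relevant power of $2$, lying in the admissible range, and with $\kron{u}{n}=\epsilon$.'' This one-variable reformulation is precisely what needs proving, and it is not close to a triviality. The membership criterion the paper actually uses (Lemma~\ref{lem:repcond}) is not a description of $\Psi\smcol{x}{y}$ as a subset of $\Gamma_1(4)^{\ge 0}\smcol{x}{y}$; it characterizes representability of $n$ via the \emph{top row} $(u,v)$ of a matrix in $\Psi$, so that one ranges over nonnegative lattice solutions of $ux+vy=n$ in a prescribed residue class and must find one with $\kron{u}{v}=1$. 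Converting the two-variable condition $\kron{u}{v}=1$ along that lattice into a condition of the form $\kron{\cdot}{\opart{n}}=\pm 1$ on a block of consecutive integers requires the full M\"obius--Kronecker reciprocity formula of Proposition~\ref{prop:mobiuskron}, with its $(-1)^\alpha$ and correction factors $\mu_1,\mu_2$; this forks into many sub-cases keyed to $v_2(x)$, $v_2(y)$, $v_2(n)$ and whether certain auxiliary quantities are $\equiv 2 \pmod{4}$ (Lemmas~\ref{lem:denomeventual}--\ref{lem:numsquareseventual}). It is exactly this case analysis that produces the constants $8xy$, the factor $2^k$, and the separate treatment of $n=2m^2$. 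You have asserted the conclusion of that reduction without supplying the argument, and the square case (your ``one builds the representation directly'') has its own nontrivial branching (Lemmas~\ref{lem:denomsquareseventual} and~\ref{lem:numsquareseventual}).

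Two smaller points. Your Step~1 flags ``connectivity'' as the delicate part, but given Theorem~\ref{thm:psicharacterization} and equation~\eqref{eqn:kronchange} the set-theoretic identification you write down follows immediately; the difficulty is rather that this characterization is not directly usable, since deciding whether $\smcol{n}{m}\in\Gamma_1(4)^{\ge 0}\smcol{x}{y}$ for some $m$ is awkward, whereas the paper's top-row criterion linearizes the problem. Also, the theorem makes no assertion about $n < 3.41\cdot 10^6$; there are no ``residual small cases'' in this statement to close by machine, so that part of your plan addresses a claim the theorem does not make.
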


\begin{table}[htb]
	\centering
	\caption{Complete characterization of obstructions for $\Psi$.}\label{table:psi1obstructions}
\begin{tabular}{cccccc} 
\toprule
\multicolumn{2}{c}{Orbit type} & \multicolumn{4}{c}{Obstructions} \\
\toprule
    \multirow{2}{*}{$(x, y)\pmod{4}$} & \multirow{2}{*}{$\kron{x}{y}$}    & \multicolumn{2}{c}{Numerator} & \multicolumn{2}{c}{Denominator} \\ 
    &    & congruence  & reciprocity & congruence  & reciprocity \\ 
    \toprule
    \multirow{2}{*}{$(1, 0)$}     & $-1$             & \multirow{2}{*}{$\equiv 1\pmod{4}$} & $\neq n^2$ & \multirow{2}{*}{$\equiv 0\pmod{4}$} & $\neq n^2$ \\ 
                                  & $1$              &                              & -     &                              & -     \\ \midrule
    \multirow{2}{*}{\vspace{-0.25cm}$(3, 0)$}     & $-\kron{-1}{y}$  & \multirow{2}{*}{\vspace{-0.25cm}$\equiv 3\pmod{4}$} & -     & \multirow{2}{*}{\vspace{-0.25cm}$\equiv 0\pmod{4}$} & $\neq n^2$ \\ 
                                  & $\kron{-1}{y}$   &                              & -     &                              & -     \\ \midrule
    \multirow{2}{*}{$(\ast, 1)$}  & $-1$             & \multirow{2}{*}{-}        & $\neq n^2$ & \multirow{2}{*}{$\equiv 1\pmod{4}$} & $\neq n^2$ \\ 
                                  & $1$              &                              & -     &                              & -     \\ \midrule
                    $(\ast, 2)$   &                  & $\equiv 1\pmod{2}$                  & -     & $\equiv 2\pmod{4}$                  & -     \\ \midrule
    \multirow{2}{*}{$(\ast, 3)$}  & $-1$             & \multirow{2}{*}{-}        & $\neq n^2$ & \multirow{2}{*}{$\equiv 3\pmod{4}$} & -     \\ 
                                  & $1$              &                              & -     &                              & -     \\
\bottomrule
\end{tabular}
\end{table}

We wish to emphasize that these reciprocity obstructions are \emph{not} present in orbits of the Zariski closure $\SL(2)$, or even in $\Gamma_1(4)^{\ge 0}$.   The following is a statement of strong approximation (for background, see \cite{Rapinchuk}): 

\begin{proposition}\label{prop:introstrong}
Let $G = \Gamma_1(4)$ as a scheme over $\operatorname{Spec} \ZZ$.
    Then $\Psi$ is dense in $\Gamma_1(4)(\widehat{\ZZ})$.
\end{proposition}

\begin{proof}
    Note that modulo $2$, $\Psi$ and $\Gamma_1(4)$ coincide.  The rest is shown in Theorem~\ref{thm:strongapprox}.
\end{proof}

Observe that $\Gamma_1(4)$ has the same congruence obstructions as $\Psi$ but does \emph{not} have the further reciprocity obstructions of Theorem~\ref{thm:TableForPsi} (this can be verified directly).  Thus, the reciprocity obstructions in this paper are not inherited from any algebraic set.

That the reciprocity obstructions listed in Table~\ref{table:psi1obstructions} do occur for $\Psi$ is a simple application of Theorem~\ref{thm:semigroupreciprocity}, but that no other obstructions occur requires more work.

As Theorem~\ref{thm:TableForPsi} is effective, a full local-global result can be proven for specific orbits by combining the result with a computation. We do so in a simple case (this computation can be recreated with the code provided at \cite{GHSemigroup}).

\begin{theorem}[Proof in Section~\ref{sec:explicit}]\label{thm:psi123orbit}
The set of positive integers that are not numerators in the orbit $\Psi\smcol{2}{3}$ is equal to
\[\{n^2:n\in\ZZ^+\}\cup\{3, 6, 7, 10, 12, 15, 18, 19, 27, 31, 34, 55, 63, 99, 115\}.\]
\end{theorem}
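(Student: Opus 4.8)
The plan is to combine the effective local-global statement of Theorem~\ref{thm:TableForPsi} with a finite computation, exactly as the theorem is designed to be used. First I would determine which row of Table~\ref{table:psi1obstructions} governs the orbit $\Psi\smcol{2}{3}$: here $(x,y) = (2,3)$, so $(x,y) \equiv (2,3) \pmod 4$, i.e. the orbit falls into the $(\ast,3)$ case. Since $\kron{2}{3} = -1$, we are in the first sub-row of that block, which records that the numerators have \emph{no} congruence obstruction and a reciprocity obstruction ruling out squares $n^2$. (Note this is consistent with the hypotheses of Theorem~\ref{thm:semigroupreciprocity}: $y=3$ is odd and $\kron{x}{y}=-1$, so that theorem already tells us numerators in $\Psi\smcol{2}{3}$ are never squares.) Thus a priori the only integers that can fail to be numerators are the squares together with finitely many small sporadic values, and the problem reduces to pinning down exactly which small non-squares occur.

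Next I would extract the explicit bound. With $x=2$, $y=3$, we have $xy = 6$, so $7.542795\,xy = 45.25677$, and $r$ is the unique positive root of $\sqrt{r}/(\log r \log\log r) = 45.25677$; this is a concrete (if large) number that one computes numerically. Also $8xy = 48$, so the largest power of two at most $48$ is $2^k = 32$. Theorem~\ref{thm:TableForPsi} then guarantees that every non-square integer $n \ge 32\max(r, 3.41\cdot 10^6)$ lying in the (empty) congruence class restriction — i.e. every sufficiently large non-square — is a numerator of $\Psi\smcol{2}{3}$, and every square $n \ge 48$ is \emph{not} a numerator (by the reciprocity obstruction), while squares are never numerators at all by Theorem~\ref{thm:semigroupreciprocity}. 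So the set of non-numerators is precisely $\{n^2 : n \in \ZZ^+\}$ together with whatever non-squares below the threshold $N := 32\max(r, 3.41\cdot 10^6)$ fail to appear.

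The remaining step is the finite computation: enumerate enough of the orbit $\Psi\smcol{2}{3}$ to certify, for every non-square $n < N$, whether or not $n$ is a numerator. Concretely, using the characterization $\Psi = \{\genmtx \in \Gamma_1(4)^{\ge 0} : \kron{a}{b} = 1\}$ from Theorem~\ref{thm:psicharacterization} one generates matrices in $\Psi$ by a breadth-first search on products of generators (or by directly building up continued-fraction data), applies them to $\smcol{2}{3}$, and collects the resulting numerators up to size $N$; any non-square $n < N$ not collected is a genuine non-numerator, and one checks the list of such values is exactly $\{3, 6, 7, 10, 12, 15, 18, 19, 27, 31, 34, 55, 63, 99, 115\}$. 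Because the orbit has positive growth rate, the search terminates; correctness of the "not a numerator" conclusions for the listed sporadic values follows because once a numerator $m \le N$ is produced it is certified, and Theorem~\ref{thm:TableForPsi} guarantees all non-squares $\ge N$ appear, so nothing outside the computed window can be missing. This computation is the one referenced as reproducible via the code at \cite{GHSemigroup}.

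The main obstacle I anticipate is not conceptual but practical: the threshold $N = 32\max(r, 3.41\cdot 10^6)$ is extremely large (the root $r$ of $\sqrt{r}/(\log r\log\log r) = 45.25677$ is already on the order of $10^5$–$10^6$, and after multiplying by $32$ and comparing with $3.41\cdot 10^6$ one is certifying numerators up to roughly $10^8$), so one must organize the orbit enumeration efficiently — pruning matrices whose image under the functional already exceeds $N$, and exploiting that every integer is represented with high multiplicity so the search saturates quickly — in order to make the verification feasible and to be confident the sporadic list is complete rather than merely a lower bound. A secondary, purely bookkeeping point is to double-check that the reciprocity obstruction in the relevant table row indeed excludes \emph{all} squares (not just squares above some bound), which is exactly what Theorem~\ref{thm:semigroupreciprocity} provides for this orbit, so that the statement "the non-numerators are the squares plus the sporadic list" holds with no exceptions among small squares.
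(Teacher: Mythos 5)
Your overall strategy is exactly the paper's: apply the effective Theorem~\ref{thm:TableForPsi} to get a concrete threshold (the paper computes $r \approx 3.41\cdot 10^6$ from $\sqrt{r}/(\log r\log\log r)=45.25677$, so $2^k \max(r,3.41\cdot 10^6) \approx 1.0912\cdot 10^8$), invoke Theorem~\ref{thm:semigroupreciprocity} to exclude all squares unconditionally, and then do a finite verification below the threshold. Your arithmetic checks out and your identification of the table row and of the role of Theorem~\ref{thm:semigroupreciprocity} is correct.

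The one place where your plan diverges from the paper, and where it would run into trouble as written, is the finite computation. You propose a breadth-first search ``on products of generators'' of $\Psi$, but $\Psi$ is \emph{not finitely generated} (Lemma~\ref{lemma:psi-infinite}), so there is no generating set to BFS over; one would instead have to BFS over $\langle L, R\rangle^+$ with pruning and test the cumulative matrix for $\Psi$-membership afterward, which is awkward and slow at the scale of $10^8$. The paper instead uses Lemma~\ref{lem:repcond}, which reduces ``is $n$ a numerator of $\Psi\smcol{2}{3}$'' to the elementary question: does there exist a nonnegative pair $(u,v)$ with $2u+3v=n$, $u\equiv 1\pmod{4}$, and $\kron{u}{v}=1$? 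This turns the verification into a direct scan over lattice points on a line for each $n$, with no orbit enumeration at all, and is what makes the computation finish in seconds rather than being a heavy search. Your argument would be complete if you replaced the BFS step with an appeal to Lemma~\ref{lem:repcond}.
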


Although the reciprocity obstructions in $\Psi$ are not a feature of $\Gamma_1(4)$, it is nevertheless quite a large semigroup.

\begin{proposition}[Proof in Section~\ref{sec:epsilon-dim}]
\label{prop:psidim1}
    The semigroup $\Psi$ is not finitely generated, and has Hausdorff dimension $1$.
\end{proposition}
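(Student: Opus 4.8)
The plan is to prove the two assertions separately, deducing both from the characterization $\Psi = \{\genmtx \in \Gamma_1(4)^{\geq 0} : \kron{a}{b} = 1\}$ of Theorem~\ref{thm:psicharacterization}.

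For the Hausdorff dimension, the key is to exhibit, inside $\Psi$, a sub-semigroup whose limit set already has dimension $1$ (or dimension arbitrarily close to $1$). Since $\Psi \subseteq \Gamma_1(4)^{\geq 0} \subseteq \SL(2,\ZZ)^{\geq 0}$, the limit set of $\Psi$ is contained in $[0,\infty)$, so its dimension is at most $1$; the work is the lower bound. First I would look for a single element $\gamma = \genmtx \in \Psi$ with $a,b,c,d$ all large, together with, say, the parabolic-type elements $L^{4}=\sm{1}{4}{0}{1}$ and $R^{4}=\sm{1}{0}{4}{1}$, which lie in $\Gamma_1(4)^{\geq 0}$; one checks directly that $\kron{1}{4}=1$ so $L^4 \in \Psi$, and one must arrange matters (multiplying by suitable generators, or passing to $L^4$-conjugates) so that enough such elements land in $\Psi$. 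Because $\Psi$ is defined by the single open-type condition $\kron{a}{b}=1$, and because the Kronecker symbol $\kron{a}{b}$ of a product can be computed from those of the factors via the reciprocity/multiplicativity identities underlying Theorem~\ref{thm:psicharacterization}, there is a coset-like structure: roughly half of all words in $L,R$ (of a fixed combinatorial shape) lie in $\Psi$. Concretely, I expect the cleanest route is: take the free sub-semigroup generated by $L^{4N}$ and $R^{4N}$ for large $N$ — no wait, those alone fail the Kronecker condition in general — so instead take words $w$ in $\{L^{4},R^{4}\}$ of even length and observe that the subset satisfying $\kron{a}{b}=1$ still contains a free sub-semigroup on two generators of the form $w_1, w_2$ (products of boundedly many $L^4,R^4$), whose bounded-distortion limit set has positive dimension; then thicken the alphabet by allowing $L^{4k},R^{4k}$ for $k$ in a long arithmetic progression while maintaining the Kronecker constraint, and let the alphabet size grow, pushing the dimension to $1$ via Hensley-type or Mauldin–Urbański estimates for infinite iterated function systems (the dimension of the full continued-fraction IFS is $1$, and a density-$1/2$ sub-system still limits to $1$). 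The main obstacle is precisely this bookkeeping: showing that the Kronecker-symbol constraint, as the alphabet grows, removes at most a controlled (roughly half) proportion of words so that the surviving sub-semigroup's limit set still has dimension tending to $1$. I would handle this by fixing a ``base'' element $b_0 \in \Psi$ and noting that right-multiplication by a fixed matrix permutes Kronecker symbols in a controlled way (again via Theorem~\ref{thm:psicharacterization}), so that for any finite alphabet $\mathcal{A}$ of $L^{4k}$-type generators, either $\Gamma_{\mathcal{A}}$ or $b_0\Gamma_{\mathcal{A}}$ (or a bounded union of such translates) meets $\Psi$ in a sub-semigroup of comparable limit set.

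For non-finite-generation, I would argue by contradiction: suppose $\Psi = \langle M_1, \dots, M_r\rangle^{+}$. The semigroup $\SL(2,\ZZ)^{\geq 0} = \langle L, R\rangle^{+}$ is itself finitely generated, and $\Gamma_1(4)^{\geq 0}$ is a finite-index sub-semigroup; but $\Psi$ is an ``index-two-like'' subset cut out by a non-congruence condition (the Kronecker symbol $\kron{a}{b}=1$ is not determined by $a,b$ modulo any fixed integer), which should obstruct finite generation. The cleanest implementation: $\Psi$ contains $L^4$ and $R^4$ (both have $\kron{a}{b}=1$), hence contains all $L^{4}R^{4k_1}L^{4k_2}\cdots$ words that satisfy the constraint; in particular it contains matrices whose top-left entry $a$ has arbitrarily large smallest prime factor. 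If $\Psi$ were generated by $M_1,\dots,M_r$, then every element's entries would be polynomially bounded products of the entries of the $M_i$, and one can find an explicit element of $\Psi$ — e.g. built so that its $(a,b)$ pair realizes a Kronecker symbol configuration incompatible with being a product of the $M_i$ (since each $M_i$ imposes $\kron{a_i}{b_i}=1$ and the symbol of a product is a product of symbols twisted by reciprocity signs depending only on residues mod $4$ of the intermediate entries, which the $M_i$ also fix) — that cannot be so expressed. More robustly, I expect the intended argument is shorter: $\Psi$ is not finitely generated because its ``complement'' $\Gamma_1(4)^{\geq 0}\setminus\Psi$ is also a union of $\Psi$-cosets inside $\Gamma_1(4)^{\geq 0}$ but is not a semigroup and not finitely generated, and a finitely generated semigroup cutting out exactly the fibre $\kron{a}{b}=1$ over $\Gamma_1(4)^{\geq 0}$ would force $\kron{\cdot}{\cdot}$ to factor through a congruence condition, contradicting quadratic reciprocity (primes split ``randomly'' between residues, so $\kron{a}{b}=1$ is genuinely non-congruence). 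I would phrase this last point via the characterization: if $\Psi$ were finitely generated, then membership in $\Psi$ would be decidable by a finite automaton reading a word in $L,R$, hence by a congruence condition on the entries, contradicting that $\kron{a}{b}=1$ is not such a condition (exhibited by two matrices in $\Gamma_1(4)^{\geq 0}$ with the same entries mod $M$ for every $M$ but opposite Kronecker symbols — e.g. varying a large prime entry).

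I expect the dimension lower bound to be the genuine technical heart, and I would lean on the code of Pollicott–Slipantschuk (acknowledged in the paper) to both guide and, if a clean closed-form proof proves elusive, to certify the dimension computation for an explicit exhausting family of finite sub-alphabets of $\Psi$; the non-finite-generation statement, by contrast, should follow quickly once the non-congruence nature of the defining condition is pinned down using Theorem~\ref{thm:psicharacterization}.
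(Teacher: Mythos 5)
Your proposal does not find the paper's mechanism, and both halves have genuine gaps.

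For the Hausdorff dimension, your concrete plan — build up from words in $L^{4}, R^{4}$ (or $L^{4k}, R^{4k}$ for growing $k$) and push $\delta$ to $1$ via infinite-IFS theory — cannot reach $1$: the semigroup generated by \emph{all} powers $L^{4k}, R^{4k}$ ($k\geq 1$) is exactly $\Psi_2 = \langle L^4, R^4\rangle^+$, whose dimension is about $0.605$. (Incidentally, these matrices all lie in $\Psi$ automatically since $\kron{1}{4k}=1$; your worry that they ``fail the Kronecker condition'' is misplaced, but it doesn't help you.) The ``density-$1/2$ sub-system still has dimension $1$'' heuristic is correct, but you supply no mechanism for it; without one, you are left with hard pressure-function estimates. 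The paper's mechanism is a coset decomposition (Lemma~\ref{lemma:coset}): if $\Gamma = \bigcup_i \Gamma_i$ with each $\Gamma_i$ mapped into each $\Gamma_j$ by a fixed left translation in $\SL(2,\ZZ)^{\ge 0}$, then $\delta(\Gamma_i)=\delta(\Gamma)$ for all $i$, because $\Lambda(\Gamma)=\bigcup_i\Lambda(\Gamma_i)$, dimension of a finite union is the maximum, and M\"obius maps preserve dimension. Applied once to the mod-$4$ congruence classes of $\SL(2,\ZZ)^{\ge 0}$ this gives $\delta(\Gamma_1(4)^{\ge 0})=1$; applied again to $\Gamma_1(4)^{\ge 0} = \Psi \cup \Psi'$ — using \eqref{eqn:kronchange} to see that left multiplication by any $\gamma\in\Psi'$ swaps $\Psi$ and $\Psi'$ — it gives $\delta(\Psi)=1$. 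This is precisely the ``left multiplication by a fixed element permutes Kronecker symbols'' structure you gestured at, but cast as a clean two-piece decomposition rather than an alphabet-growing scheme.

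For non-finite-generation, the step ``finitely generated $\Rightarrow$ membership decidable by a finite automaton $\Rightarrow$ congruence condition on entries'' breaks at the second arrow. A regular language of $L,R$-words need not be a congruence condition on matrix entries (``word begins with $L$'' is regular and not congruence), so no contradiction with the non-congruence nature of $\kron{a}{b}=1$ follows. Your entry-size sketch also does not work: entries of length-$n$ words grow exponentially, so there is no polynomial bound to violate. The paper instead argues either (a) analytically: by Lemma~\ref{lem:subwordsdistintimages} any finite generating set can be pruned to one with no initial-subword relations, whence by Lemma~\ref{lemma:lessthanone} the corresponding IFS is disjoint and (via Beardon's general Cantor sets) has dimension $<1$, contradicting $\delta(\Psi)=1$; or (b) constructively (Lemma~\ref{lemma:psi-infinite}): the matrices $M_k = RL^{4k}R^3 \in \Psi$ have no nontrivial proper initial subword in $\Gamma_1(4)^{\ge 0}$, because modulo $4$ left multiplication by $L$ cycles $R^3$ with period $4$ never landing in $\Gamma_1(4)$. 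Hence each $M_k$ must itself appear in any generating set, which is therefore infinite. Either of these is shorter and airtight where your automaton argument is not.
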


It is interesting to consider subsemigroups which have Hausdorff dimension $1/2 < \delta < 1$: large enough that one might ask about the local-global principle, but small enough that such a question is challenging to answer. It has been shown that congruence obstructions can occur in such continued fraction semigroups $\Gamma$ of $\GL(2, \ZZ)$ \cite[pp. 2-3]{BourgainKontorovichZarembaAnnals}. 
We can generate subsemigroups with reciprocity obstructions by choosing generators from $\Psi$.

 \begin{theorem}[Proof in Section~\ref{sec:epsilon-dim}]
 \label{thm:epsilon-dim}
     Given any $\epsilon>0$, there exists a finitely generated subsemigroup $\Gamma\subseteq\SL(2, \ZZ)^{\geq 0}$ with Hausdorff dimension $1-\epsilon < \delta < 1$ that exhibits reciprocity obstructions.
 \end{theorem}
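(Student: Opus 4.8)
The plan is to realize $\Gamma$ as a finitely generated subsemigroup of $\Psi$. Any such $\Gamma$ automatically misses squares in suitable orbits: fix once and for all a base vector $\smcol{x}{y}$ with $y$ odd and $\kron{x}{y}=-1$, for instance $\smcol{2}{5}$. Then Theorem~\ref{thm:semigroupreciprocity} already tells us that no numerator (or denominator) of $\Psi\smcol{x}{y}$, hence none of the sub-orbit $\Gamma\smcol{x}{y}$, is a perfect square. So the whole task is to produce a finitely generated $\Gamma\subseteq\Psi$ with two further properties: $1-\epsilon<\delta(\Gamma)<1$, and the orbit $\Gamma\smcol{x}{y}$ has \emph{no} congruence obstruction to squares --- that is, its numerators meet a square residue modulo every $n$ --- which is precisely what is needed to promote ``misses squares'' to ``exhibits reciprocity obstructions''.

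For the dimension I would lean on Proposition~\ref{prop:psidim1}, which gives $\delta(\Psi)=1$. For the conformal (continued-fraction) limit sets under consideration, the Hausdorff dimension of the limit set of $\Psi$ equals the supremum of the Hausdorff dimensions of the limit sets of its finitely generated subsemigroups; hence there is a finitely generated $\Gamma_0\subseteq\Psi$ with $\delta(\Gamma_0)>1-\epsilon$. On the other hand every such $\Gamma_0$ is thin: a finitely generated subsemigroup of $\Gamma_1(4)^{\geq 0}$ realizes only continued fractions spellable from a fixed finite block set, and since no element of $\Gamma_1(4)$ equals $\sm{1}{0}{1}{1}$ the digit $1$ never occurs freely, so such a semigroup cannot produce all continued fractions and, by the dimension theory of continued fractions with restricted partial quotients, has $\delta(\Gamma_0)<1$.

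The remaining point --- and the main obstacle --- is the congruence condition, which I would arrange by adjoining finitely many further generators from $\Psi$. Since $\Psi$ is a semigroup, its image in $\SL(2,\ZZ/n\ZZ)$ is a \emph{subgroup} $H_n$ for every $n$ (a subsemigroup of a finite group is a subgroup); and by Table~\ref{table:psi1obstructions} the orbit $\Psi\smcol{x}{y}$ has no numerator congruence obstruction, so the first coordinates of $H_n\smcol{x}{y}$ run over all of $\ZZ/n\ZZ$, squares included. The group generated by $\Psi$ is Zariski dense in $\SL(2)$ --- it contains the non-commuting unipotents $\sm{1}{4}{0}{1}$ and $\sm{1}{0}{4}{1}$, both of which lie in $\Psi$ by Theorem~\ref{thm:psicharacterization} --- so by strong approximation for $\SL(2)$, together with the fact that an open subgroup of the profinite group $\SL(2,\widehat{\ZZ})$ is topologically finitely generated, there is a finite set $\mathcal{F}\subseteq\Psi$ whose reduction modulo every $n$ generates $H_n$. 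In a finite group the subsemigroup generated by a generating set is the whole group, so $\langle\mathcal{F}\rangle^{+}$ also has image $H_n$ modulo every $n$. Now set $\Gamma:=\langle\Gamma_0\cup\mathcal{F}\rangle^{+}$. Then $\Gamma\subseteq\Psi$ is finitely generated; the orbit $\Gamma\smcol{x}{y}$ reduces onto $H_n\smcol{x}{y}$ modulo every $n$, so its numerators have no congruence obstruction yet contain no squares; and $1-\epsilon<\delta(\Gamma_0)\leq\delta(\Gamma)<1$, the upper bound again because $\Gamma$ is a finitely generated (hence thin) subsemigroup of $\Gamma_1(4)^{\geq 0}$. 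This gives the required $\Gamma$.

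I expect the technical heart to be the congruence step: producing a \emph{single} finite generating set surjective modulo all $n$ simultaneously, and verifying that the passage from $\Gamma_0$ to $\Gamma$ does not push the Hausdorff dimension up to $1$. If the construction underlying Proposition~\ref{prop:psidim1} is carried out so that its approximating subsemigroups already have full congruence image --- for example by taking the subsemigroup generated by all elements of $\Psi$ whose entries lie below a large bound --- then one may dispense with the enlargement entirely and read off both properties at once.
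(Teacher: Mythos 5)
Your proposal pursues the same broad strategy as the paper's: realize $\Gamma$ as a finitely generated subsemigroup of $\Psi$, so that avoidance of squares in a suitable orbit comes for free from Theorem~\ref{thm:semigroupreciprocity}, and then separately control the Hausdorff dimension and the congruence obstructions. But the primary route you sketch leaves two real gaps, both of which you yourself flag, and the simplification you mention only at the very end is in fact exactly what the paper does.

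First, the upper bound $\delta(\Gamma_0)<1$ for finitely generated $\Gamma_0\subseteq\Gamma_1(4)^{\geq 0}$. Your justification (``the digit $1$ never occurs freely $\ldots$ cannot produce all continued fractions'') is neither precise nor adequate: missing some continued fractions does not by itself force dimension below $1$, and the digit $1$ does occur in orbits of, say, $\Psi_1=\langle L,R^4\rangle^+$ (only the odd-indexed partial quotients are forced into $4\ZZ^+$). The paper's argument is genuinely geometric: within $\Gamma_1(4)^{\geq 0}$ one can always reduce a finite generating set so that no generator is an initial $LR$-subword of another; by Lemma~\ref{lem:subwordsdistintimages} this makes the contraction intervals $T_\gamma([0,1])$ pairwise disjoint, and Lemma~\ref{lemma:lessthanone} then places the limit set inside a general Cantor set in the sense of Beardon, giving $\delta<1$. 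Second, your congruence step via adjoining $\mathcal{F}$ is heavier machinery than needed and still has a hole: even granting strong approximation and topological finite generation of the relevant open subgroup of $\SL(2,\widehat{\ZZ})$, you must show that a topological generating set can be drawn from $\rho(\Psi)$ itself rather than its profinite closure, which requires a Frattini-type argument you do not supply. The paper sidesteps all of this by taking $\Gamma'_n$ to be generated by every matrix in $\Psi$ with entries bounded by $n$, so that automatically $\Psi_1\subseteq\Gamma'_n\subseteq\Psi$ for $n\geq 4$; since Proposition~\ref{prop:psi1table} (resting on the explicit strong approximation Theorem~\ref{thm:strongapprox}) shows $\Psi_1$ and $\Psi$ have identical congruence obstructions, the sandwiched $\Gamma'_n$ inherits them with no further work, and the reciprocity obstructions descend. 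Adopting this as your primary construction removes the second gap immediately; the first would still need the Cantor-set argument in place of the heuristic.
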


Two such subsemigroups of particular interest are as follows.

\begin{definition}
    Define
    \[\Psi_1:=\left\langle\lm{1}{1}{0}{1},\lm{1}{0}{4}{1}\right\rangle^+,\qquad\Psi_2:=\left\langle\lm{1}{4}{0}{1},\lm{1}{0}{4}{1}\right\rangle^+.\]
\end{definition}

We estimate the respective Hausdorff dimensions of their limit sets as $\delta_1\approx 0.719$ and $\delta_2=0.6045578\pm 10^{-7}$, both of which exceed $1/2$. One example of reciprocity obstruction for each of $\Psi_1$ and $\Psi_2$ is as follows.

\begin{proposition}[Proof in Section~\ref{sec:twoexamples}]\label{prop:twoexamples}
    Numerators in the orbit $\Psi_1\smcol{2}{3}$ have no congruence obstructions, yet cannot be square. Denominators in the orbit $\Psi_2\smcol{3}{8}$ are only restricted to be $0\pmod{4}$, yet cannot be square. In particular, these are both instances of reciprocity obstructions.
\end{proposition}

We show that Table~\ref{table:psi1obstructions} is also valid for $\Psi_1$.

\begin{proposition}[Proof in Section~\ref{sec:twoexamples}]
\label{prop:psi1table}
    Let $x,y$ be coprime nonnegative integers. Then the congruence obstructions described in Table~\ref{table:psi1obstructions} still hold for the orbit $\Psi_1\smcol{x}{y}$. Furthermore, if a reciprocity obstruction is listed, then it occurs in the orbit $\Psi_1\smcol{x}{y}$ as well.
\end{proposition}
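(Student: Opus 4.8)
The plan is to derive the proposition from two facts about $\Psi_1$: that it is contained in $\Psi$, and that it has the same reduction modulo $n$ as $\Psi$ for every $n\geq 1$.

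First, $\Psi_1\subseteq\Psi$. Both generators $\sm{1}{1}{0}{1}$ and $\sm{1}{0}{4}{1}$ lie in $\Gamma_1(4)^{\geq 0}$ and satisfy $\kron{1}{1}=\kron{1}{0}=1$, so by Theorem~\ref{thm:psicharacterization} they lie in $\Psi$; as $\Psi$ is a semigroup, $\Psi_1\subseteq\Psi$, and hence $\Psi_1\smcol{x}{y}\subseteq\Psi\smcol{x}{y}$ for all coprime nonnegative $x,y$. Any residue class modulo $n$ missed by the numerators (resp.\ denominators) of $\Psi\smcol{x}{y}$ is therefore also missed by those of $\Psi_1\smcol{x}{y}$, so the congruence obstructions of Table~\ref{table:psi1obstructions} remain valid for $\Psi_1$; and since by Theorem~\ref{thm:semigroupreciprocity} the numerators and denominators of $\Psi\smcol{x}{y}$ are never squares in the cases where a reciprocity obstruction is listed, the same is true of $\Psi_1\smcol{x}{y}$. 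This gives condition (2) in the definition of a reciprocity obstruction; it remains to establish condition (1), namely that for every $n\geq 1$ the numerators (resp.\ denominators) of $\Psi_1\smcol{x}{y}$ hit a square residue modulo $n$.

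For this I would show that $\Psi_1$, $\Psi$ and $\Gamma_1(4)$ all have the same image in $\SL(2,\ZZ/n)$ for every $n$. Since a finite set of matrices and the group it generates have the same image in a finite group, $\Psi_1\bmod n$ equals the image of the group $\langle\sm{1}{1}{0}{1},\sm{1}{0}{4}{1}\rangle$; and here one invokes the classical fact that $\Gamma_1(4)$ is free of rank two on exactly these two parabolic generators (it is torsion-free, since $-I\notin\Gamma_1(4)$, and $X_1(4)$ has genus $0$ with three cusps and no elliptic points, so $\Gamma_1(4)\cong F_2$, with $\sm{1}{1}{0}{1}$ and $\sm{1}{0}{4}{1}$ generating the parabolic subgroups at the cusps $\infty$ and $0$). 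Thus $\Psi_1\bmod n=\Gamma_1(4)\bmod n$; combined with the containments $\Psi_1\subseteq\Psi\subseteq\Gamma_1(4)$ this forces $\Psi_1\bmod n=\Psi\bmod n=\Gamma_1(4)\bmod n$, hence $\Psi_1\smcol{x}{y}\bmod n=\Psi\smcol{x}{y}\bmod n$ for all $n$. In particular the two orbits have the same set of numerator (resp.\ denominator) residues modulo $n$, so condition (1) for $\Psi_1\smcol{x}{y}$ is literally condition (1) for $\Psi\smcol{x}{y}$, which is part of the content of Theorem~\ref{thm:TableForPsi}.

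The main obstacle is not any of these steps individually but making sure the completeness half of Theorem~\ref{thm:TableForPsi} is available in the right form: I need it to assert that the set of residues of $\Psi\smcol{x}{y}$ modulo every $n$ is as large as congruence permits (equivalently, that it is determined by $\Psi\bmod n$), so that the information transports verbatim to $\Psi_1$. All the genuine difficulty there is $2$-adic, tracking which classes modulo $2^k$ are hit by the $\Gamma_1(4)$-orbit of $\smcol{x}{y}$; for odd prime powers $p^k$ there is nothing to do, since $\sm{1}{a}{0}{1}$ and $\sm{1}{0}{4b}{1}$ run over all elementary matrices modulo $p^k$ ($4$ being a unit) and hence generate $\SL(2,\ZZ/p^k)$, so the orbit of the primitive vector $\smcol{x}{y}$ is every primitive vector and each coordinate takes every value in $\ZZ/p^k$, squares included. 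If one prefers an argument not citing Theorem~\ref{thm:TableForPsi}, the alternative is to repeat its $2$-adic case analysis for the orbit of $\Gamma_1(4)\bmod 2^k$ on $\smcol{x}{y}$, which is a finite computation for each orbit type appearing in Table~\ref{table:psi1obstructions}.
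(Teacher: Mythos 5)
Your proof is correct, but it follows a genuinely different route than the paper's. The paper proves Proposition~\ref{prop:psi1table} by invoking Theorem~\ref{thm:strongapprox} (explicit strong approximation for $\Psi_1$), which is itself built from the hands-on $2$-adic generator manipulations in Lemmas~\ref{lem:coprimecombine}, \ref{lem:oddsurj}, and \ref{lem:4enough}. You instead deduce the needed identity $\Psi_1/n = \Gamma_1(4)/n$ from two observations: (i) the image of a finitely generated submonoid in a finite group is a subgroup, so $\Psi_1/n$ equals $\langle L, R^4\rangle/n$; and (ii) $\langle L, R^4\rangle = \Gamma_1(4)$, a classical fact about level-$4$ congruence subgroups. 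The sandwich $\Psi_1/n \subseteq \Psi/n \subseteq \Gamma_1(4)/n$ then collapses, the orbit residues of $\Psi_1\smcol{x}{y}$ and $\Psi\smcol{x}{y}$ coincide modulo every $n$, and the completeness half of Theorem~\ref{thm:TableForPsi} transports verbatim — a clean way to avoid redoing any orbit analysis. Your approach is more conceptual and shorter in isolation; the paper's explicit approach is more self-contained and is needed anyway to establish $e_2 = 4$ for $\Psi_2$, which has no analogous shortcut.

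One step in your sketch deserves to be tightened. You justify $\langle L, R^4\rangle = \Gamma_1(4)$ by observing that $\Gamma_1(4)$ is torsion-free, $X_1(4)$ has genus $0$ with three cusps, hence $\Gamma_1(4) \cong F_2$, and that $L$, $R^4$ generate the parabolic stabilizers at $\infty$ and $0$. Those facts alone do not yet imply that $L$ and $R^4$ generate the full group: a free group of rank $2$ has many non-generating pairs, and there is a third cusp whose parabolic subgroup must be accounted for. What closes the argument is the standard product relation among the parabolic generators of a genus-zero Fuchsian group (so that any two of the three cusp parabolics, suitably oriented, form a free basis), or more concretely the fact that conjugating the free presentation of $\bar{\Gamma}(2)$ on $\sm{1}{2}{0}{1}, \sm{1}{0}{2}{1}$ by $\sm{1}{0}{0}{2}$ identifies it with $\Gamma_1(4) = \langle L, R^4\rangle$. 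The fact is well known and correct, so citing it is legitimate, but the reasoning as written leaves the decisive step implicit.
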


While we were able to show that Table~\ref{table:psi1obstructions} completely characterized the orbits of $\Psi$, we are not able to do so for $\Psi_1$. Instead, we collect computational evidence towards this result, and formulate the following conjecture.

\begin{conjecture}\label{conj:psi1localglobal}
    Table \ref{table:psi1obstructions} completely characterizes the numbers that can appear as numerators or denominators in orbits of $\Psi_1$. In other words, every sufficiently large integer that is not ruled out by the given congruence and reciprocity obstructions will appear in the orbit.
\end{conjecture}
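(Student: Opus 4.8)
The plan is to read Conjecture~\ref{conj:psi1localglobal} as a local--global statement of Zaremba type and to separate its two halves. One inclusion is already in hand: Proposition~\ref{prop:psi1table} shows that every congruence and reciprocity obstruction in Table~\ref{table:psi1obstructions} genuinely occurs for $\Psi_1$, and, since $\Psi_1=\langle L,R^4\rangle^+\subseteq\Psi$, the reciprocity obstructions are simply inherited via Theorem~\ref{thm:semigroupreciprocity} and Theorem~\ref{thm:psicharacterization}. What remains --- the real content --- is the converse: for each orbit $\Psi_1\smcol{x}{y}$ and each residue class not excluded by the table, show that every sufficiently large non-square in that class is a numerator (resp.\ denominator). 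So the target reduces to a pure local--global theorem for the thin semigroup $\Psi_1$.

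To attack that, I would run the Bourgain--Kontorovich circle method for Zaremba-type semigroups \cite{BourgainKontorovichZarembaAnnals, Huang15}, using that $\Psi_1$ is freely generated by $L$ and $R^4$, so that points of $\Psi_1\smcol{x}{y}$ are parametrized by finite words in these two matrices. For a growing parameter $N$, write the representation count of a target integer $n$ as $\int_0^1 S_N(\theta)\,e(-n\theta)\,d\theta$, where $S_N$ is the exponential sum over the numerators (resp.\ denominators) of $\{w\smcol{x}{y}:w\in\Psi_1,\ \|w\|_\infty<N\}$, and split the circle into major arcs near rationals of small denominator and the complementary minor arcs. The major-arc term should factor as a volume term of size $N^{2\delta_1-1-o(1)}$ --- positive and growing because $\delta_1\approx 0.719>1/2$, by a Hensley-type counting asymptotic of the shape \eqref{eqn:hensley} --- times a singular series $\mathfrak{S}(n)$ assembled from the reductions of $\Psi_1\smcol{x}{y}$ modulo prime powers. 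The delicate local input is to prove that these reductions hit exactly the residues allowed by Table~\ref{table:psi1obstructions}, so that $\mathfrak{S}(n)>0$ precisely for admissible $n$; this is a strong-approximation-type question for the free semigroup $\Psi_1$, and --- in contrast to the group case --- one must here actively verify that no extra $\GL(2,\ZZ)$-style congruence obstruction of the sort observed in \cite{BourgainKontorovichZarembaAnnals} is hiding. The minor arcs would be dispatched by $\ell^2$ / bilinear estimates together with expansion (spectral gap) bounds for $\Psi_1$ in its congruence quotients and sum--product input, which is where the hypothesis $\delta_1>1/2$ is essential.

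The main obstacle is exactly the one that leaves Zaremba's conjecture open: this method yields only that a density-one set of admissible integers appears (each with multiplicity $N^{2\delta_1-1-o(1)}$), not that \emph{every} sufficiently large admissible integer appears. Upgrading ``almost all'' to ``all large'' for a thin semigroup of Hausdorff dimension strictly below $1$ is beyond current technology --- the one full local--global theorem of this flavour, Kontorovich's for Soddy sphere packings \cite{KontorovichSoddy}, exploits special geometric structure not visible for $\langle L,R^4\rangle^+$. So the honest deliverable of this plan is the density-one version of Conjecture~\ref{conj:psi1localglobal}, necessarily with an ineffective threshold --- unlike the effective Theorem~\ref{thm:TableForPsi} for $\Psi$, which went through only because $\Psi$ has Hausdorff dimension $1$ and is large enough to borrow from ambient results. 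Settling the conjecture as stated, or making it effective, would need a genuinely new idea, presumably one that exploits the combinatorial structure of the continued fraction expansions forced by the generators $L$ and $R^4$.
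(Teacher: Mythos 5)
This statement is explicitly a \emph{conjecture} in the paper, not a theorem; the authors write that they ``are not able'' to prove completeness of Table~\ref{table:psi1obstructions} for $\Psi_1$, and support Conjecture~\ref{conj:psi1localglobal} only by the computation \texttt{test\_table1\_psi1\_many} (checking orbits $\Psi_1\smcol{x}{y}$ with $1\le x,y\le 200$ for square values below $400^2$). You correctly recognize this, correctly reduce one direction to Proposition~\ref{prop:psi1table} (which is indeed how the obstruction direction is handled in the paper), and your assessment of the converse direction is sound: the standard Bourgain--Kontorovich circle-method machinery for such thin semigroups yields only a density-one statement, not ``every sufficiently large admissible integer,'' so your proposal stops short of the conjecture, as you yourself say.

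One refinement worth flagging in your diagnosis of why the $\Psi$ case succeeded where $\Psi_1$ does not: it is not really that $\Psi$ ``has Hausdorff dimension 1 and is large enough to borrow from ambient results.'' The mechanism in Theorem~\ref{thm:TableForPsi} is not a circle method at all --- it is the explicit membership criterion of Lemma~\ref{lem:repcond} (flowing from Theorem~\ref{thm:psicharacterization}, which characterizes $\Psi$ as \emph{all} of $\Gamma_1(4)^{\geq 0}$ with $\kron{a}{b}=1$), which turns representability into an elementary solvability problem for $ux+vy=n$ subject to congruence and Kronecker conditions, resolved by the P\'olya--Vinogradov-type Lemma~\ref{lem:fnupperbound}. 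That closed-form membership test is what makes the argument both elementary and effective, and it evaporates for the proper, freely generated subsemigroup $\Psi_1=\langle L,R^4\rangle^+$: there is no comparably clean description of which $(u,v)$ lie in an orbit. So the gap is not merely that $\delta_1<1$ blocks an effective circle method; it is that the entire proof strategy of Section~\ref{sec:psi-many}--\ref{sec:psi-few} is unavailable, which is precisely why the authors leave this as a conjecture.
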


Based on computational evidence, an example explicit conjecture is the following.

\begin{conjecture}\label{conj:twoexamples}
    Every non-square integer $n > 10569$ appears as a numerator in the orbit $\Psi_1\smcol{2}{3}$.
\end{conjecture}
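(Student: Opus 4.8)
The plan is to mirror the proof of Theorem~\ref{thm:psi123orbit}: split the problem into a finite verification for small $n$ and an asymptotic statement for the tail, then glue the two together at an explicit crossover. \textbf{Reduction.} First apply Proposition~\ref{prop:psi1table} to $(x,y)=(2,3)$. Since $y=3$ is odd and $\kron{2}{3}=-1$, this orbit lies in the $(\ast,3)$, $\kron{x}{y}=-1$ row of Table~\ref{table:psi1obstructions}: numerators have no congruence obstruction but cannot be squares. Hence the admissible integers for this functional are exactly the non-squares, and the conjecture is precisely the assertion that every sufficiently large admissible integer is represented, with $10569$ the explicit threshold. \textbf{Finite part.} Using \cite{GHSemigroup}, enumerate $\Psi_1\smcol{2}{3}$ by matrix height $\|w\|_\infty\le B$ and collect the numerators. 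By \eqref{eqn:hensley} with $\delta_1\approx 0.719$, the orbit elements of height $\le B$ number $\asymp B^{2\delta_1}$, and one verifies computationally that for a modest $B$ every non-square in a window $(10569, N_1]$ is attained while no non-square $\le 10569$ persists as a sporadic value; this pins down the constant $10569$.

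What remains is the tail: every non-square $n>N_1$ must be a numerator of $\Psi_1\smcol{2}{3}$, and this is the main obstacle. For the dimension-one semigroup $\Psi$, the corresponding statement is exactly the effective Theorem~\ref{thm:TableForPsi}, whose proof can exploit Hensley's exact orbit count together with comparatively elementary covering and character-sum estimates (hence the closed-form root condition appearing there). But $\Psi_1$ is genuinely thin, $\delta_1<1$, so that route is unavailable: one is forced into the Bourgain--Kontorovich circle-method framework --- a major/minor-arc decomposition of the orbit's representation function, the expander / spectral-gap input controlling the congruence quotients of $\Psi_1$, and an ensemble average over the orbit --- to produce first a positive proportion, and then, with substantially more work, a density-one or ``all sufficiently large'' set of admissible integers, with every constant made explicit. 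Since $2\delta_1-1\approx 0.44>0$ the average multiplicity diverges, so this is morally within reach of existing technology, but carrying it out effectively with the stated numerical threshold is exactly the step we cannot presently complete; this is why the statement is posed as a conjecture rather than a theorem.

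Two fallback observations. First, a \emph{conditional} proof is immediate: granting Conjecture~\ref{conj:psi1localglobal}, or merely its effective form for this single orbit (which furnishes some explicit $N_1$ beyond which all non-squares appear), the finite computation above closes the gap below $N_1$ and yields the conjecture unconditionally relative to that input. Second, one might hope to shortcut the tail by a bootstrapping argument internal to the semigroup: applying $\sm{1}{1}{0}{1}$ repeatedly sends a numerator $u$ with denominator $v$ to the progression $u, u+v, u+2v,\dots$, so realizing, for each required residue, a single orbit element of controlled size would fill the remaining progressions; but controlling the denominators well enough to cover all large non-squares with an explicit bound runs into essentially the same effectivity barrier, so I expect the circle-method input to be unavoidable.
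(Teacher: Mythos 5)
You have correctly recognized that this statement is a \emph{conjecture} in the paper, not a theorem, and that no proof is given: the paper supports it only by computing the numerators of $\Psi_1\smcol{2}{3}$ up to $10^7$ and observing that the largest missing non-square is $10569$ (Section~\ref{sec:twoexamples}, function \texttt{test\_psi1\_23orbit}). Your analysis of \emph{why} it must remain a conjecture is accurate and consistent with the paper's implicit reasoning: the reduction via Proposition~\ref{prop:psi1table} and Table~\ref{table:psi1obstructions} correctly identifies the admissible set as the non-squares; the finite-range computation is exactly what the paper performs; and you are right that the effective machinery of Theorem~\ref{thm:TableForPsi} is unavailable because $\Psi_1$ is thin ($\delta_1 < 1$), so one cannot run through \emph{every} solution of the linear equation as in Lemma~\ref{lem:repcond}, and current Bourgain--Kontorovich technology gives at best density-one, ineffective results. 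Your observation that the statement follows conditionally from Conjecture~\ref{conj:psi1localglobal} plus a finite check is also correct and is, in effect, the paper's point in placing the two conjectures side by side. In short: there is no proof to compare against, your diagnosis of the obstruction to proving it is sound, and your account of the computational evidence matches what the paper actually does.
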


Finally, we turn to statements in terms of continued fractions, and compare our results to Zaremba's conjecture. The orbits of both $\Psi_1$ and $\Psi_2$ have interesting descriptions in terms of continued fractions.

\begin{proposition}[Proof in Section~\ref{sec:ctd}]\label{prop:psi12incontfrac}
    Let $x,y$ be coprime nonnegative integers, where the even continued fraction expansion of $\frac{x}{y}$ is $[0;a_1,a_2,\ldots,a_{2n-1}]$. For coprime nonnegative integers $u, v$, we have $\smcol{u}{v}\in\Psi_2\smcol{x}{y}$ if and only if the even continued fraction of $\frac{u}{v}$ has the form
    \[
    \frac{u}{v}=
        [4b_0;4b_1, 4b_2, \ldots, 4b_{2m}, 4b_{2m+1} + a_1, a_2, \ldots, a_{2n-1}],
    \]
    where the $b_i$ are nonnegative integers with $b_i>0$ if $1\leq i\leq 2m$.
    Similarly, $\smcol{u}{v}\in\Psi_1\smcol{x}{y}$ if and only if the even continued fraction of $\frac{u}{v}$ has the form
    \[
    \frac{u}{v}=
        [b_0;4b_1, b_2, 4b_3,\ldots, b_{2m-2}, 4b_{2m-1}, b_{2m}, 4b_{2m+1} + a_1, a_2, \ldots, a_{2n-1}],
    \]
    where the $b_i$ are nonnegative integers with $b_i>0$ if $1\leq i\leq 2m$.
\end{proposition}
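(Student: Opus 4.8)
The plan is to translate membership in the orbit directly into a matrix factorization statement via~\eqref{eqn:evencontfrac}, and then show that the allowed factorizations are exactly the ones giving the stated continued fraction shapes. Recall that $\Psi_2 = \langle L^4, R^4\rangle^+$ where $L = \sm{1}{1}{0}{1}$ and $R = \sm{1}{0}{1}{1}$, and $\Psi_1 = \langle L, R^4\rangle^+$. By~\eqref{eqn:evencontfrac}, writing $\frac{x}{y}=[0;a_1,\dots,a_{2n-1}]$ means $\smcol{x}{y} = L^0 R^{a_1} L^{a_2}\cdots R^{a_{2n-1}}\smcol{1}{0} = R^{a_1}L^{a_2}\cdots R^{a_{2n-1}}\smcol{1}{0}$ (using $a_0=0$ since $x<y$). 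An element $\smcol{u}{v}\in\Psi_2\smcol{x}{y}$ is then $W\smcol{x}{y}$ for some $W\in\langle L^4,R^4\rangle^+$; I will argue that such $W$ can be written (uniquely) as $L^{4b_0}R^{4b_1}L^{4b_2}\cdots R^{4b_{2m+1}}$ with $b_i>0$ for $1\le i\le 2m$ — the interior exponents are forced positive because consecutive identical generators merge, while $b_0$ and $b_{2m+1}$ may vanish — and then multiply this through: $W\smcol{x}{y}$ has the word $L^{4b_0}R^{4b_1}\cdots L^{4b_{2m}}R^{4b_{2m+1}+a_1}L^{a_2}\cdots R^{a_{2n-1}}\smcol{1}{0}$, since $R^{4b_{2m+1}}$ abuts $R^{a_1}$. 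By~\eqref{eqn:evencontfrac} applied in reverse (and checking the resulting word still has the alternating $L\dots R$ form of even length, reading off $a_0 = 4b_0$), this is exactly the claimed even continued fraction; conversely every continued fraction of that shape unwinds to such a $W$. The $\Psi_1$ case is identical except the generating word for $W\in\langle L, R^4\rangle^+$ has unconstrained $L$-exponents and $4\ZZ$-valued $R$-exponents, giving $[b_0;4b_1,b_2,4b_3,\dots]$.

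The key steps, in order, are: (i) record the normal form for words in $\langle L^4,R^4\rangle^+$ and in $\langle L,R^4\rangle^+$ — every element has a unique reduced expression as an alternating product of powers of $L$-type and $R$-type generators, with the relevant exponents in $4\ZZ^{\ge 0}$ or $\ZZ^{\ge 0}$, and interior exponents strictly positive — which follows from the fact that $\SL(2,\ZZ)^{\ge 0}=\langle L,R\rangle^+$ is a free monoid on $L,R$ (uniqueness of even continued fractions is exactly this); (ii) concatenate the normal form of $W$ with the fixed word $R^{a_1}L^{a_2}\cdots R^{a_{2n-1}}$ coming from $\frac{x}{y}$, noting that the junction merges an $R$-block with $R^{a_1}$ and so changes only that one coefficient to $4b_{2m+1}+a_1$; (iii) verify the concatenated word again has even length and alternating form so that~\eqref{eqn:evencontfrac} applies and reads off the even continued fraction of $\frac{u}{v}$; (iv) run the argument backwards: given $\frac{u}{v}$ of the stated form, peel off the prescribed prefix of $L^4$- and $R^4$-blocks (resp. $L$- and $R^4$-blocks) to exhibit the witnessing $W$, the only subtlety being that $4b_{2m+1}+a_1 \ge a_1$ guarantees we can split off $R^{4b_{2m+1}}$ with $b_{2m+1}\ge 0$.

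The main obstacle, such as it is, is bookkeeping at the two boundaries of the word: one must be careful that $b_0$ and $b_{2m+1}$ are allowed to be zero (so the prefix may be empty or start/end with the "wrong" letter) while all the interior $b_i$ with $1\le i\le 2m$ are forced to be positive by the reducedness of the normal form, and that after concatenation the total word still corresponds to an \emph{even} continued fraction (odd number of coefficients $a_1,\dots,a_{2n-1}$, plus the even-indexed $4b_i$, plus $a_0=4b_0$ or $b_0$). One also needs the elementary observation that $x<y$ forces $a_0=0$ in the even continued fraction of $\frac{x}{y}$, so that the word for $\smcol{x}{y}$ genuinely begins with $R^{a_1}$ and the merge in step (ii) is with an $R$-block as claimed; the case $x=0$ (so $\frac{x}{y}$ has the degenerate expansion, handled by $n=0$ or small conventions) should be checked separately but is immediate. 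Everything else is a direct computation with the free monoid $\langle L,R\rangle^+$ and the dictionary~\eqref{eqn:evencontfrac}.
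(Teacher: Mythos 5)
Your proposal is correct and follows essentially the same route as the paper: write $\smcol{u}{v}=W_1\smcol{x}{y}$ with $W_1$ a word in the generators, express $\smcol{x}{y}=W_2\smcol{1}{0}$ via~\eqref{eqn:evencontfrac}, and read the even continued fraction of $u/v$ off the concatenation $W_1W_2$, with the merge of the trailing $R$-block of $W_1$ and leading $R^{a_1}$ of $W_2$ producing the $4b_{2m+1}+a_1$ coefficient. Your additional care about the normal form in the free monoid, the positivity of the interior exponents, and the boundary cases ($b_0$ or $b_{2m+1}=0$, $x<y$ forcing $a_0=0$) is a slightly more explicit rendering of what the paper leaves to the reader, but there is no substantive difference in the argument.
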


Stated more informally, for $\Psi_2$, the continued fraction is built from the alphabet $\{4, 8, 12, 16, \ldots\}$, with the tail being the continued fraction of $\frac{u}{v}$. For $\Psi_1$, we have a similar description, except only every other term must be a multiple of 4.  This allows us to describe denominators of rationals built from the alphabet $4\ZZ^+$ in terms of an orbit of $\Psi_2$.

\begin{corollary}[Proof in Section~\ref{sec:ctd}]\label{cor:psi2and0mod4}
    Let $S$ be the set of denominators of rational numbers in $(0, 1)$ which have continued fractions that can be built from the alphabet $4\ZZ^+$. Then $S$ is the union of the numerators and denominators of the orbit $\Psi_2\smcol{0}{1}$.
\end{corollary}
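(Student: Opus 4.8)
The plan is to read the orbit $\Psi_2\smcol{0}{1}$ off its two generators, identify exactly which continued fractions occur, and then match that list against the definition of $S$. This is essentially the ``empty tail'' instance of Proposition~\ref{prop:psi12incontfrac} (take $\tfrac xy=\tfrac01$), but it is cleanest to argue directly from \eqref{eqn:evencontfrac}. Since $\lm{1}{0}{4}{1}\smcol{0}{1}=\smcol{0}{1}$, every vector of $\Psi_2\smcol{0}{1}$ equals $w\smcol{0}{1}$ for a word $w$ in $L^4,R^4$ that is trivial or, after deleting trailing copies of $R^4$, ends on the right in $L^4$. Writing a nontrivial such $w$ in reduced alternating form $X_1^{4e_1}X_2^{4e_2}\cdots X_\ell^{4e_\ell}$ with $X_i\in\{L,R\}$, $X_i\neq X_{i+1}$, $X_\ell=L$, $e_i\ge 1$, equation~\eqref{eqn:evencontfrac} and its odd-length analogue give $w\smcol{0}{1}=\smcol{u}{v}$ with
\[
\frac{u}{v}=\begin{cases}[0;4e_1,4e_2,\ldots,4e_\ell]&\text{if }X_1=R\ (\ell\text{ even}),\\[2pt][4e_1;4e_2,\ldots,4e_\ell]&\text{if }X_1=L\ (\ell\text{ odd}),\end{cases}
\]
and conversely every fraction of this shape occurs (take $w=R^{4e_1}L^{4e_2}\cdots L^{4e_\ell}$, resp.\ $w=L^{4e_1}R^{4e_2}\cdots L^{4e_\ell}$). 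So, apart from $\smcol{0}{1}$ itself, the orbit consists of the $\smcol{u}{v}$ for which $\tfrac uv=[a_0;a_1,\ldots,a_{2r}]$ for some $r\ge 0$ with $a_0\in\{0,4,8,\ldots\}$, $a_1,\ldots,a_{2r}\in 4\ZZ^+$, and $r\ge 1$ whenever $a_0=0$; here $\tfrac uv\in(0,1)$ forces $a_0=0$ and $\tfrac uv>1$ forces $a_0\ge 4$.

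\textbf{Orbit contained in $S$.} Take a nontrivial $\smcol uv\in\Psi_2\smcol{0}{1}$. If $\tfrac uv=[0;a_1,\ldots,a_{2r}]\in(0,1)$ then $v\in S$ by definition, and $u$ is the numerator of this fraction, hence the denominator of $[0;a_2,\ldots,a_{2r}]\in(0,1)$ (deleting the first partial quotient inverts the fraction and drops the leading term), which is again built from $4\ZZ^+$, so $u\in S$. If $\tfrac uv=[a_0;a_1,\ldots,a_{2r}]>1$, its reciprocal $\tfrac vu=[0;a_0,a_1,\ldots,a_{2r}]$ lies in $(0,1)$ and is built from $4\ZZ^+$ (now $a_0\ge 4$ is a genuine partial quotient), so its denominator $u\in S$, and its numerator $v$ is the denominator of $[0;a_1,\ldots,a_{2r}]$, which lies in $S$ provided $r\ge 1$. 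The only genuinely exceptional vectors are $\smcol{0}{1}$ and the vectors $\smcol{4e}{1}$ (the case $r=0$), whose second coordinate $1$ is not a denominator of any rational in $(0,1)$; these must be reconciled with the boundary convention governing $S$ (their first coordinates being $0$ and $4e$, with $4e$ the denominator of $\tfrac{1}{4e}$).

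\textbf{$S$ contained in the orbit.} Let $n\in S$, witnessed by $\tfrac wn=[0;4c_1,\ldots,4c_k]\in(0,1)$ with $c_i\ge 1$, $k\ge 1$. If $k$ is even, $[0;4c_1,\ldots,4c_k]$ has odd length and is of the first type above, so $\smcol wn\in\Psi_2\smcol{0}{1}$ and $n$ occurs as a denominator. If $k$ is odd, pass to the reciprocal $\tfrac nw=[4c_1;4c_2,\ldots,4c_k]$, which has odd length $k$ with leading term $\ge 4$ and all remaining terms in $4\ZZ^+$, hence is of the second type above, so $\smcol nw\in\Psi_2\smcol{0}{1}$ and $n$ occurs as a numerator. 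Either way $n$ lies among the numerators and denominators of the orbit, which together with the previous paragraph gives the claimed identification (up to the boundary cases above).

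\textbf{The main obstacle.} The delicate point is the parity mismatch between the two descriptions: the orbit (equivalently Proposition~\ref{prop:psi12incontfrac}) only produces continued fractions of one parity of length, whereas membership in $S$ merely requires \emph{some} expansion from $4\ZZ^+$, whose length has the wrong parity exactly half the time. What rescues the argument is that passing to the reciprocal of a $(0,1)$-fraction simultaneously flips the parity of its length and interchanges its numerator and denominator; so each $n\in S$ is realised as a denominator (when $k$ is even) or as a numerator (when $k$ is odd) of an orbit vector. Carrying this out also requires the routine verifications flagged above: that every fraction of the two listed shapes actually occurs in the orbit, and the disposal of the $r=0$ and trivial-word vectors against whatever precise convention for $S$ is in force.
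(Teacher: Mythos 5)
Your argument is correct (up to the boundary vectors $\smcol{0}{1}$ and $\smcol{4e}{1}$ that you flag, a looseness the paper's own phrasing shares) and is essentially the paper's approach unrolled: the paper deduces the corollary by specializing Proposition~\ref{prop:psi2nomoreevencontfrac} to $x/y = 0/1$, and that proposition is in turn Proposition~\ref{prop:psi12incontfrac} combined with the reciprocal-flips-length-parity observation you isolate as the ``main obstacle.'' You have simply reproved these special cases directly from \eqref{eqn:evencontfrac} rather than citing the intermediate propositions.
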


Since orbits of $\Psi_2$ exhibit reciprocity obstructions, this shows that such obstructions exist in a context with the flavour of Zaremba's conjecture. However, Zaremba's conjecture as stated remains untouched: it deals with continued fractions built from a finite alphabet $\mathcal{A}=\{1, 2, \ldots, A\}\not\subseteq 4\ZZ^+$, as well as the orbit of $\smcol{0}{1}$.  These do not exhibit reciprocity obstructions of the type we study.

We give a more explicit example class of orbits with reciprocity obstructions.

\begin{theorem}[Proof in Section~\ref{sec:ctd}]\label{thm:zarembareciprocity}
    Let $\mathcal{A}$ be a subset of $4\ZZ^+$ containing $4$ and $8$, and let $\Psi_{\mathcal{A}} \subseteq \GL(2,\ZZ)$ be the semigroup corresponding to continued fractions built from this alphabet, i.e.
    \[
     \Psi_\mathcal{A} = \langle \sm{0}{1}{1}{a} : a \in \mathcal{A} \rangle^+.
    \]
    Then $\Psi_{\mathcal{A}}\smcol{x}{y}$ exhibits reciprocity obstructions in the denominator if $y\equiv 1\pmod{4}$, $x$ is odd, and $\kron{x}{y}=-1$.
\end{theorem}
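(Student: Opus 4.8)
We must establish two things about the set $X$ of denominators of the orbit $\Psi_{\mathcal{A}}\smcol{x}{y}$: that $X$ contains no perfect square, and that for every $N\geq 1$ some element of $X$ is a square modulo $N$ (so that the obstruction is genuinely of reciprocity, not congruence, type). Each generator has determinant $-1$, hence preserves coprimality; since $\gcd(x,y)=1$, every vector in the orbit is primitive and its ``denominator'' makes sense throughout.

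The plan for the first point is to push everything onto the semigroup $\Psi$ and invoke Theorem~\ref{thm:semigroupreciprocity}. The key structural fact is that a product of \emph{two} generators already lies in $\Psi$: for $a,b\in 4\ZZ^+$ we have $\sm{0}{1}{1}{a}\sm{0}{1}{1}{b}=\sm{1}{b}{a}{ab+1}\in\Gamma_1(4)^{\geq 0}$, and $\kron{1}{b}=1$, so it lies in $\Psi$ by Theorem~\ref{thm:psicharacterization}; hence every even-length word in the generators lies in $\Psi$. I would then split the orbit by word parity. An even-length word sends $\smcol{x}{y}$ into $\Psi\smcol{x}{y}$, and since $y$ is odd with $\kron{x}{y}=-1$, the first bullet of Theorem~\ref{thm:semigroupreciprocity} forbids square denominators there (this also disposes of $\smcol{x}{y}$ itself, as $\Psi$ contains the identity). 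An odd-length word $P$ factors as $P=Q\sm{0}{1}{1}{a}$ with $Q$ of even length, so $P\smcol{x}{y}=Q\smcol{y}{x+ay}$ lies in $\Psi\smcol{y}{x+ay}$; here $x+ay$ is odd (since $x$ is odd and $4\mid ay$) and coprime to $y$, so it remains only to check $\kron{y}{x+ay}=-1$ to apply Theorem~\ref{thm:semigroupreciprocity} to this orbit as well. This is the one step with real content: as $y$ and $x+ay$ are odd and positive, the Kronecker symbol agrees with the Jacobi symbol and reciprocity gives $\kron{y}{x+ay}=(-1)^{\frac{y-1}{2}\cdot\frac{(x+ay)-1}{2}}\kron{x+ay}{y}$; the sign is trivial because $y\equiv 1\Mod{4}$, and $\kron{x+ay}{y}=\kron{x}{y}=-1$ since $x+ay\equiv x\Mod{y}$.

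For the second point, fix $N$ and reduce modulo $N$. Since every generator has determinant $-1$, the image $\overline{\Psi_{\mathcal{A}}}$ in $\GL(2,\ZZ/N)$ is a subsemigroup of a finite group, hence a subgroup; and as $4,8\in\mathcal{A}$ it contains $\overline{L^4}=\overline{\sm{0}{1}{1}{4}}^{\,-1}\overline{\sm{0}{1}{1}{8}}$ and $\overline{R^4}=\overline{\sm{0}{1}{1}{8}}\,\overline{\sm{0}{1}{1}{4}}^{\,-1}$. For $N$ odd, $4$ is a unit, so $\overline{L^4}$ and $\overline{R^4}$ generate the same subgroup as $L$ and $R$, which reduce to generators of $\SL(2,\ZZ/N)$; adjoining the determinant $-1$ element $\overline{\sm{0}{1}{1}{4}}$ then forces $\overline{\Psi_{\mathcal{A}}}=\SL^{\pm}(2,\ZZ/N)$, which acts transitively on primitive vectors, so some word sends $\smcol{x}{y}$ to a vector with bottom entry $\equiv 0\Mod{N}$, a square. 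For $N=2^{a}$ I would argue by hand: using either the identity or $\sm{0}{1}{1}{4}^{2}=\sm{1}{4}{4}{17}$ — according as $y\equiv 1$ or $5\Mod{8}$ — one reaches a denominator $\equiv 1\Mod{8}$, and any such integer is a square modulo every power of $2$. For general $N=2^{a}M$ with $M$ odd, I would glue these by the Chinese Remainder Theorem: since $\overline{\Psi_{\mathcal{A}}}$ modulo $M$ is $\SL^{\pm}(2,\ZZ/M)$, whose abelianisation is just $\ZZ/2$ detected by the determinant, a Goursat-type analysis (using that $\overline{\Psi_{\mathcal{A}}}$ contains elements of both determinants) shows that the mod-$M$ and mod-$2^{a}$ behaviours can be prescribed simultaneously up to the single constraint that the determinants agree — enough to meet both requirements at once.

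Granting Theorems~\ref{thm:psicharacterization} and~\ref{thm:semigroupreciprocity}, the conceptual content of the first point is just the one-line Jacobi-symbol identity $\kron{y}{x+ay}=-1$; the rest of that point is formal. The hard part will be the second point: controlling the reduction $\overline{\Psi_{\mathcal{A}}}$ modulo $N$ uniformly in $N$ — in particular the $2$-adic part and its gluing to the odd part via the Chinese Remainder Theorem — precisely enough to conclude that no congruence obstruction rules out squares.
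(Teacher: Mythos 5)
Your argument is correct overall, but it takes a genuinely different route from the paper on both halves, and the congruence half is under-justified at the Goursat step.

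For the first half (no square denominators), the paper reduces to $\Psi_2$ via continued fraction concatenation (Proposition~\ref{prop:psi2nomoreevencontfrac}) and then invokes the symmetry of $\Psi_2$ to land in an orbit of $\Psi$. Your direct decomposition by word parity is cleaner and works: $C_aC_b=\sm{1}{b}{a}{ab+1}\in\Gamma_1(4)^{\geq 0}$ with $\kron{1}{b}=1$, hence in $\Psi$ by Theorem~\ref{thm:psicharacterization}; and for odd-length words $QC_a$ one lands in $\Psi\smcol{y}{x+ay}$, to which Theorem~\ref{thm:semigroupreciprocity} applies after the one-line check $\kron{y}{x+ay}=\kron{x+ay}{y}=\kron{x}{y}=-1$, using $y\equiv 1\Mod{4}$. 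This avoids the case analysis around $a_0$ and the continued-fraction detour entirely.

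For the second half, the paper's route is to observe that the even-length subsemigroup $\Psi_\mathcal{A}^2$ reduces modulo any $m$ to the same image as $\Psi_2$ (Lemma~\ref{lem:zarembareicprocitydenom}) and then quote the strong approximation Theorem~\ref{thm:strongapprox} already proved for $\Psi_2$; you instead re-prove the needed splitting directly. Your plan is sound, but the decisive point you never state is that $H_1:=\overline{\Psi_\mathcal{A}}\bmod 2^a$ is a \emph{$2$-group}: every generator $C_a$ with $a\in 4\ZZ^+$ reduces to $\sm{0}{1}{1}{0}$ modulo $4$, so $H_1$ maps onto only $\ZZ/2$ inside $\GL(2,\ZZ/2)=S_3$, and the congruence kernel is a $2$-group. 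This is what forces the Goursat quotient between $H_1$ and $\SL^{\pm}(2,\ZZ/M)$ to be a $2$-group, hence trivial or $\ZZ/2$. Quoting only that the abelianisation of $\SL^{\pm}(2,\ZZ/M)$ is $\ZZ/2$ does not suffice, because the Goursat quotient need not be abelian; without the observation that $H_1$ is a $2$-group, a non-abelian common quotient such as $S_3$ (which is a quotient of both $\GL(2,\ZZ/2^a)$ and $\GL(2,\mathbb{F}_3)$) could not be excluded when $3\mid M$. Once you record that $H_1$ is a $2$-group, the rest of the argument — the only $\ZZ/2$-quotient of $\SL^{\pm}(2,\ZZ/M)$ is via $\det$ (since $\SL(2,\ZZ/M)^{\mathrm{ab}}$ is a $3$-group), so $N_2=\SL(2,\ZZ/M)$ and $N_1=\ker(\det|_{H_1})$, so $H=\{(h_1,h_2):\det h_1=\det h_2\}$ — goes through, and pairing $h_1\in\{I,C_4^2\}$ with a suitable $h_2\in\SL(2,\ZZ/M)$ produces the required square residue.

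In short: a correct, more self-contained alternative to the paper's proof, at the cost of re-deriving the strong approximation facts rather than quoting $\Psi_2$'s; the missing observation that $H_1$ is a $2$-group should be added to close the Goursat step rigorously.
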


Requiring the alphabet to contain $4,8$ ensures the congruence obstructions admit squares, giving rise to a reciprocity obstruction. One example of such an orbit is $\Psi_\mathcal{A} \smcol{3}{5}$. The congruence condition on the denominator is $1\pmod{2}$, which admits squares, but no square denominators occur. Since $3/5 = [0;1,1,2]$, when stated in terms of continued fractions, we obtain the following result.

\begin{corollary}[Proof in Section~\ref{sec:ctd}]\label{cor:psi2threefiveorbitcontfracwhydoesjamesusesuchlongnames}
    Let $\mathcal{A}$ be a subset of $4\ZZ^+$ containing $4$ and $8$. 
    
    Consider the set $S$ of rational numbers which have a continued fraction expansion of the form
    \[
    [0; a_1, a_2, \ldots, a_n,1,1,2],
    \]
    where $a_i\in\mathcal{A}$. Then all positive odd numbers are admissible as denominators for elements of $S$, yet no denominator is a square.

Equivalently, consider the set $S'$ of rational numbers which have a continued fraction expansion of the form
    \[
    [0; a_1, a_2, \ldots, a_n],
    \]
    where $a_i\in\mathcal{A}$. Let $L(x/y) = 3x+5y$.  Then all positive odd numbers are admissible for $L(S')$, yet $L(S')$ contains no squares.
\end{corollary}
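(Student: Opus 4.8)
The plan is to recognize $S$ as the set of rationals occurring in the orbit $\Psi_\mathcal{A}\smcol{3}{5}$ of the semigroup $\Psi_\mathcal{A}=\langle\sm{0}{1}{1}{a}:a\in\mathcal{A}\rangle^+$, to apply Theorem~\ref{thm:zarembareciprocity} to this orbit, and then to translate the conclusion into the two continued-fraction languages, the second via continued-fraction reversal. First I would verify the hypotheses of Theorem~\ref{thm:zarembareciprocity} for $(x,y)=(3,5)$: we have $5\equiv 1\pmod 4$, $x=3$ is odd, and $\kron{3}{5}=-1$ since $3$ is a non-square modulo $5$. Hence $\Psi_\mathcal{A}\smcol{3}{5}$ exhibits reciprocity obstructions in the denominator; in particular, no denominator of the orbit is a perfect square. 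I would also record the congruence structure of the denominators: every $a\in\mathcal{A}\subseteq 4\ZZ^+$ is even, and a generator acts by $\smcol{p}{q}\mapsto\smcol{q}{p+aq}$, which sends a pair of odd coordinates to a pair of odd coordinates; since $\smcol{3}{5}$ has both coordinates odd, every denominator in the orbit is odd, and — using the congruence analysis underlying Theorem~\ref{thm:zarembareciprocity} (summarized in the paragraph before the corollary) — this is the \emph{only} congruence restriction, i.e.\ every odd residue modulo every $m$ is attained. Thus every positive odd integer is admissible as a denominator of the orbit, while none is a square.

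Next I would set up the dictionary between the orbit and $S$. Since $\smcol{3}{5}=\sm{0}{1}{1}{1}\sm{0}{1}{1}{1}\sm{0}{1}{1}{2}\smcol{0}{1}$, and since $\sm{0}{1}{1}{c_1}\cdots\sm{0}{1}{1}{c_k}\smcol{0}{1}$ is (by an immediate induction on $k$) the reduced numerator–denominator vector of $[0;c_1,\ldots,c_k]$ — coprimality being preserved because these matrices lie in $\GL(2,\ZZ)$ — an arbitrary element of $\Psi_\mathcal{A}\smcol{3}{5}$ is $\sm{0}{1}{1}{a_1}\cdots\sm{0}{1}{1}{a_n}\smcol{3}{5}$ with $a_i\in\mathcal{A}$, and this is the reduced numerator–denominator vector of $[0;a_1,\ldots,a_n,1,1,2]$. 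Therefore $S$ is exactly the set of rationals appearing in $\Psi_\mathcal{A}\smcol{3}{5}$, the denominators of $S$ are exactly the denominators of the orbit, and the first assertion of the corollary follows from the previous paragraph.

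For the equivalent formulation with $S'$ and $L(x/y)=3x+5y$, I would use continued-fraction reversal. If $p/q=[0;a_1,\ldots,a_n]$ in lowest terms with penultimate convergent $p'/q'=[0;a_1,\ldots,a_{n-1}]$ (with the convention $p'/q'=1/0$ when $n=0$), then appending the value $[1;1,2]=5/3$ gives $[0;a_1,\ldots,a_n,1,1,2]=\frac{5p+3p'}{5q+3q'}$, and one checks $\gcd(5p+3p',\,5q+3q')=1$ from $pq'-p'q=\pm1$, so its denominator is $5q+3q'$. On the other hand the reversed word satisfies $[0;a_n,\ldots,a_1]=q'/q$, which lies in $S'$, and $L$ takes the value $3q'+5q$ there. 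Hence the denominator of each element of $S$ equals $L$ of the reversal of the corresponding element of $S'$; since $\mathcal{A}$ is a set, the family $S'=\{[0;a_1,\ldots,a_n]:n\ge 0,\ a_i\in\mathcal{A}\}$ is closed under reversal, so the set of denominators of $S$ equals $L(S')$, and the second formulation is word-for-word the first.

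The main obstacle is not in this corollary itself, which is bookkeeping on top of Theorem~\ref{thm:zarembareciprocity}; rather, care is needed in two places. One is to be certain that the congruence obstruction on the denominators of $\Psi_\mathcal{A}\smcol{3}{5}$ is exactly ``odd'' and nothing finer, so that ``admissible'' genuinely means ``positive and odd'' and squares are genuinely not congruence-obstructed — this is precisely where the hypothesis $4,8\in\mathcal{A}$ is used, through the congruence bookkeeping in the proof of Theorem~\ref{thm:zarembareciprocity}. The other is to keep the two non-uniqueness phenomena straight — the two continued-fraction representations of a given rational, and the many words producing the same rational — which is handled cleanly by phrasing each identification as an equality of sets rather than as a bijection.
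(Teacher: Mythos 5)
Your proof is correct and takes essentially the same route as the paper: specialize Theorem~\ref{thm:zarembareciprocity} to the orbit $\Psi_\mathcal{A}\smcol{3}{5}$ for the first formulation, then pass to the second formulation via the transpose symmetry of $\Psi_\mathcal{A}$, which you phrase as continued-fraction reversal and the paper phrases as the matrix identity $L(\Psi_\mathcal{A}\smcol{0}{1}) = (0\;1)\Psi_\mathcal{A}\smcol{3}{5}$ --- the same observation in two dialects. Your version also supplies the convergent-recursion and coprimality bookkeeping that the paper elides, and it incidentally makes clear that the displayed equation in the paper's proof, which reads $(2\;3)$ and $\smcol{2}{3}$, should read $(3\;5)$ and $\smcol{3}{5}$ to match $L(x/y)=3x+5y$ and $3/5=[0;1,1,2]$.
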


Which of the two phrasings one prefers is a matter of taste, but the second phrasing is an example subject to the Local-Global Conjecture for Continued Fraction Alphabets, so we immediately obtain the following result.

\begin{theorem}
\label{thm:conjfalse}
    The Local-Global Conjecture for Continued Fraction Alphabets is false.
\end{theorem}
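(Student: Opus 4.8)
The plan is to exhibit an explicit alphabet, base vector, and linear functional that satisfy all the hypotheses of the Local-Global Conjecture for Continued Fraction Alphabets, and then to contradict its conclusion using the reciprocity obstruction already established in Corollary~\ref{cor:psi2threefiveorbitcontfracwhydoesjamesusesuchlongnames}. Take the finite alphabet $\mathcal{A}=\{4,8,12,\ldots,128\}$, which is a subset of $4\ZZ^+$ containing $4$ and $8$; the semigroup $\Gamma_\mathcal{A}=\langle\sm{0}{1}{1}{a}:a\in\mathcal{A}\rangle^+$; the primitive vector $v=\smcol{0}{1}$; and the functional $L(x,y)=3x+5y$. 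Under the continued-fraction dictionary, the orbit $\Gamma_\mathcal{A}v$ is identified with the set of rationals whose continued fraction expansion uses only digits from $\mathcal{A}$, so $L(\Gamma_\mathcal{A}v)$ is precisely the set $L(S')$ of Corollary~\ref{cor:psi2threefiveorbitcontfracwhydoesjamesusesuchlongnames}. That corollary supplies the two facts we need: every positive odd integer is admissible for this orbit and functional, while $L(\Gamma_\mathcal{A}v)$ contains no perfect squares whatsoever.

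Next I would check that $(\mathcal{A},v,L)$ meets the hypotheses of the conjecture. Certainly $\mathcal{A}\subseteq\mathbb{N}$. The orbit $\Gamma_\mathcal{A}v$ is infinite and $L$ is a nonzero functional taking arbitrarily large positive values on it, so $L(\Gamma_\mathcal{A}v)$ is infinite. The semigroup $\Gamma_\mathcal{A}$ is Zariski dense in $\SL(2)$: its determinant-one elements contain, for instance, the two distinct hyperbolic matrices $\sm{0}{1}{1}{4}\sm{0}{1}{1}{8}$ and $\sm{0}{1}{1}{8}\sm{0}{1}{1}{4}$, with disjoint fixed-point sets, and hence generate a non-elementary subgroup whose Zariski closure is all of $\SL(2)$. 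Finally --- and this is the ingredient that makes the conjecture's prediction non-vacuous --- the Hausdorff dimension $\delta_\mathcal{A}$ of the limit set of $\Gamma_\mathcal{A}$ satisfies $\delta_\mathcal{A}>1/2$; this is a finite numerical computation of the type discussed in the introduction and implemented in \cite{GHSemigroup}. (One could equally run the argument with the infinite alphabet $4\ZZ^+$, whose limit set has the already-estimated dimension $\delta_2\approx 0.6046>1/2$.)

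Assuming the conjecture, for every $\epsilon>0$ there is $X_0$ so that each admissible integer $n\asymp X\geq X_0$ appears in $L(\Gamma_\mathcal{A}v)$ with multiplicity at least $X^{2\delta_\mathcal{A}-1-\epsilon}$. Choosing $\epsilon=\delta_\mathcal{A}-\tfrac12>0$, every admissible $n\asymp X$ with $X$ large therefore lies in $L(\Gamma_\mathcal{A}v)$ with multiplicity at least $X^{\delta_\mathcal{A}-1/2}\geq 1$. But for every sufficiently large odd $m$, the integer $n=m^2$ is a positive odd integer, hence admissible, and $n\asymp m^2$; by Corollary~\ref{cor:psi2threefiveorbitcontfracwhydoesjamesusesuchlongnames} it is a perfect square and so does \emph{not} belong to $L(\Gamma_\mathcal{A}v)$. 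This contradicts the conjecture, completing the proof.

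The substantive work --- the reciprocity obstruction itself (Theorem~\ref{thm:semigroupreciprocity}, transported to $\Psi_\mathcal{A}$ in Theorem~\ref{thm:zarembareciprocity}), the continued-fraction translation in Corollary~\ref{cor:psi2threefiveorbitcontfracwhydoesjamesusesuchlongnames}, and the bound $\delta_\mathcal{A}>1/2$ --- is all in place before this theorem, so what remains is the bookkeeping above. I expect the two points requiring genuine care to be: first, confirming that the excluded integers are truly \emph{admissible} in the sense of the conjecture, which is exactly why one imposes $4,8\in\mathcal{A}$, so that the only congruence constraint is $\equiv 1\pmod 2$ and the odd squares are not congruence-obstructed; and second, that the $o(1)$ in the conjectured exponent cannot absorb the discrepancy --- it cannot, because $\delta_\mathcal{A}-1/2$ is a fixed positive constant, which is precisely why the Hausdorff-dimension estimate is indispensable rather than cosmetic. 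A more minor point is reconciling orbit conventions (the full monoid in $\GL(2,\ZZ)$ versus its determinant-one part), which changes $\Gamma_\mathcal{A}v$ at most by passing to a finite union of orbits and does not affect square-freeness.
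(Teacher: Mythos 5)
Your proof is correct and takes essentially the same approach as the paper: the paper deduces Theorem~\ref{thm:conjfalse} directly from Corollary~\ref{cor:psi2threefiveorbitcontfracwhydoesjamesusesuchlongnames} together with the rigorous computation $\delta_{\mathcal{A}}>1/2$ for $\mathcal{A}=\{4,8,\ldots,128\}$, treating the implication as immediate. You merely unpack the details the paper compresses (Zariski density, the choice $\epsilon=\delta_{\mathcal{A}}-\tfrac12$ defeating the $o(1)$, and the admissibility of odd squares), and you are right to flag the $\GL(2,\ZZ)$ monoid versus $\SL(2,\ZZ)$ semigroup bookkeeping as the one point the paper does not spell out either.
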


The smallest subalphabet of $4\ZZ^+$ for which the Hausdorff dimension exceeds $1/2$ is $\mathcal{A} = \{4,8,12,\ldots,128\}$, where $\Psi_\mathcal{A}$ has Hausdorff dimension $\delta_{\mathcal{A}}=0.500890640842\pm 10^{-12}>0.5$ (many thanks go to Julia Slipantschuk, who shared with us her code to rigorously compute Hausdorff dimensions for finite alphabets; see \cite{pollicott2023effective, bandtlow2020lagrange}).

Finally, we can combine orbits to find a larger family of continued fractions which miss squares.

\begin{corollary}[Proof in Section~\ref{sec:ctd}]\label{cor:bigcontfracnosquares}
    Let $S$ be the set of rational numbers which have a continued fraction expansion of the form
    \[
    [0; 4a_1, 4a_2, \ldots, 4a_n,a_{n+1},1,2],
    \]
    where the $a_i$ are positive integers. Then all positive integers are admissible as denominators for elements of $S$, yet no denominator is a square.
\end{corollary}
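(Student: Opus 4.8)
The plan is to realize $S$ as a union of orbits of the semigroup $\Psi_2$, or rather a family of such orbits indexed by the ``tail'' data $a_{n+1}$, and then invoke the reciprocity obstruction for $\Psi_2$ (Theorem~\ref{thm:twoexamples}, or more precisely the analysis behind Table~\ref{table:psi1obstructions} applied to $\Psi_2$). First I would rewrite a continued fraction $[0;4a_1,\ldots,4a_n,a_{n+1},1,2]$ in matrix form via \eqref{eqn:evencontfrac}: normalizing to even length, the denominator is the bottom entry of a product $L^{0}R^{4a_1}L^{4a_2}\cdots$ applied to $\smcol{1}{0}$, and one separates off the prefix $L^{4a_1}R^{4a_2}\cdots$ (which lies in $\Psi_2$, since $\Psi_2 = \langle L^4, R^4\rangle^+$ up to the $L\leftrightarrow R$ convention) from the fixed suffix corresponding to $[\,\cdot\,;a_{n+1},1,2]$. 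Thus the denominators of $S$ with a given value of the suffix form exactly the set of denominators (equivalently, by symmetry of the orbit under transpose-like moves, numerators) of an orbit $\Psi_2\smcol{x}{y}$, where $\smcol{x}{y}$ is determined by $[\,\cdot\,;a_{n+1},1,2]$ and $a_{n+1}$ ranges over $\ZZ^+$. Here I would lean on Proposition~\ref{prop:psi12incontfrac}, which already gives the precise dictionary between orbits of $\Psi_2$ and continued fractions of the stated shape, so most of this bookkeeping is a direct citation rather than new work.

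Next I would run through the possible classes $(x,y)\bmod 4$ together with $\kron{x}{y}$ that arise as $a_{n+1}$ varies, and check two things against Table~\ref{table:psi1obstructions} (valid for $\Psi_2$ by the same reasoning used for $\Psi$, cf.\ Proposition~\ref{prop:psi1table} and the remark that the table characterizes $\Psi$'s obstructions): (i) in every such class there is a reciprocity obstruction in the denominator (so no square denominators appear, for any $a_{n+1}$); and (ii) the union over all $a_{n+1}$ of the admissible denominators is \emph{all} positive integers — i.e.\ although each individual orbit has a congruence obstruction modulo $4$, the congruence classes hit depend on $a_{n+1}$ and together exhaust $\ZZ^+$. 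Concretely, $3/5 = [0;1,1,2]$ has $(x,y)\equiv(3,1)\bmod 4$, and varying $a_{n+1}$ one also reaches representatives in the $(\ast,3)$, $(\ast,2)$, and $(1,0)$/$(3,0)$ rows; one must verify that the reciprocity column of Table~\ref{table:psi1obstructions} forbids squares in each relevant row while the union of the congruence columns covers all residues, so ``all positive integers are admissible.'' This is the step I expect to be the main obstacle: it requires care to confirm that \emph{every} tail $a_{n+1}$ lands in a row whose denominator-reciprocity entry is $n^2$ (and none in the rows with entry ``$-$'', such as $(\ast,3)$ or $(3,0)$ with $\kron{-1}{y}$ sign making the obstruction vanish), and separately to confirm the admissibility union; the sign of $\kron{x}{y}$ as a function of $a_{n+1}$ must be tracked through the continued-fraction-to-matrix conversion.

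Finally, for the admissibility claim ``all positive integers are admissible as denominators,'' I would note that admissibility is the conjunction of the definiteness condition (vacuous here, since these orbits are not definite) and the absence of a congruence obstruction ruling the integer out; since $S$ is the union of the relevant orbits, an integer is admissible for $S$ precisely when it is admissible for at least one constituent orbit, and the computation in the previous step shows this holds for all of $\ZZ^+$. Combining (i) and (ii) gives the corollary. I would remark that, unlike Corollary~\ref{cor:psi2threefiveorbitcontfracwhydoesjamesusesuchlongnames} where a single orbit suffices, the role of the free parameter $a_{n+1}$ is exactly to wash out the mod-$4$ congruence obstruction while preserving the reciprocity obstruction — this is the ``combine orbits'' idea flagged before the statement — so the proof is essentially a finite case-check over the rows of Table~\ref{table:psi1obstructions} once the orbit decomposition is in place.
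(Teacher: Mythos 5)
Your overall strategy — realize $S$ as a union of orbits indexed by the tail, then reduce (via the machinery of Theorem~\ref{thm:zarembareciprocity} and Proposition~\ref{prop:psi12incontfrac}) to the orbits $\Psi\smcol{x}{y}$ and consult Table~\ref{table:psi1obstructions} — is the right one and matches the paper. But the case check you anticipate would not go the way you expect, and this is a genuine gap rather than a bookkeeping detail.

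The fixed tail $[\,\cdot\,;a_{n+1},1,2]$ gives the vector $\smcol{3}{3k+2}$ (with $k=a_{n+1}$), since $[0;k,1,2]=3/(3k+2)$, and you must analyze the orbit $\Psi\smcol{3}{3k+2}$ according to $k\pmod 4$. Your plan asserts that \emph{every} value of $a_{n+1}$ must land in a row of Table~\ref{table:psi1obstructions} whose denominator-reciprocity entry reads $n^2$, and explicitly flags the rows $(\ast,3)$ and the $\kron{x}{y}=\kron{-1}{y}$ subcase of $(3,0)$ as rows that had better \emph{not} occur. That expectation is wrong: for $k\equiv 3\pmod 4$ one has $y\equiv 3\pmod 4$, so the orbit falls in the $(\ast,3)$ row with \emph{no} reciprocity obstruction on the denominator; and for $k\equiv 0\pmod 4$ one lands in the $(\ast,2)$ row, again with no reciprocity obstruction on the denominator. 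What saves the argument in those two cases is that the \emph{congruence} obstruction there (denominators are $3\pmod4$, resp.\ $2\pmod4$) already excludes all squares outright — a congruence class can forbid squares without there being any reciprocity obstruction. The paper's proof is precisely a four-way split on $k\pmod4$: two cases killed by congruence, two by reciprocity. Written as you describe, you would hit those two rows and conclude something had gone wrong.

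Secondly, you correctly flag that "the sign of $\kron{x}{y}$ as a function of $a_{n+1}$ must be tracked" as the main obstacle, but you do not resolve it. The resolution hinges on the fact that $y=3k+2$ satisfies $y\equiv 2\pmod 3$ for \emph{all} $k$, which pins down $\kron{3}{y}$ (via quadratic reciprocity) in the two residue classes where a reciprocity obstruction is needed: for $k\equiv1\pmod4$ one gets $\kron{3}{y}=\kron{y}{3}=\kron{2}{3}=-1$, and for $k\equiv 2\pmod4$ one checks $\kron{3}{y}=-\kron{-1}{y}$. Without this observation, the sign of $\kron{x}{y}$ would genuinely appear to vary uncontrollably with $a_{n+1}$, and there would be no reason to believe that squares are excluded in those cases. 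You also need a one-line argument that the numerator of $\Psi\smcol{3}{3k+2}$ is never a square either (it is $3\pmod4$ by Lemma~\ref{lem:kronpreserved} / Table~\ref{table:psi1obstructions}), since Theorem~\ref{thm:zarembareciprocity}'s reduction only tells you the denominator of $S$ is a numerator \emph{or} denominator of that $\Psi$-orbit.

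The admissibility claim (ii) in your plan — that the union over $a_{n+1}$ of the orbit-level congruence classes covers all of $\ZZ^+$ — is fine and matches what the paper does via Lemma~\ref{lem:zarembareicprocitydenom}.
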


A computation suggests the following conjecture.

\begin{conjecture}\label{conj:contfraceventuallyall}
    For all non-square integers $v> 7968219670470$, there exists a coprime positive integer $u<v$ for which
    \[\frac{u}{v}=[0; 4a_1, 4a_2, \ldots, 4a_n,a_{n+1},1,2],\]
    where the $a_i$ are positive integers.
\end{conjecture}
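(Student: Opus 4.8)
The final statement is a local--global conjecture for the restricted continued fractions of Corollary~\ref{cor:bigcontfracnosquares}; like Zaremba's conjecture it lies beyond current technology, but here is the line of attack I would pursue.

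\emph{Step 1 (reduction to a thin-orbit counting problem).} First I would translate the statement into the language of the semigroup $\Psi_2$. Applying Proposition~\ref{prop:psi12incontfrac} with $x/y = [0; t, 1, 2] = \tfrac{3}{3t+2}$, a rational $u/v \in (0,1)$ has a continued fraction of the form $[0; 4a_1, \ldots, 4a_n, a_{n+1}, 1, 2]$ exactly when $\smcol{u}{v} \in \Psi_2 \smcol{3}{3t+2}$ for some $t \ge 1$ (the unconstrained coefficient $a_{n+1}$ is absorbed into the bridging term $4b_{2m+1} + t$ of Proposition~\ref{prop:psi12incontfrac}). So the set $S$ of Corollary~\ref{cor:bigcontfracnosquares} is, up to the routine distinction between a rational's two continued-fraction expansions, the union over $t \ge 1$ of the orbits $\Psi_2\smcol{3}{3t+2}$ (numerators and denominators together), and the conjecture becomes: every sufficiently large non-square $v$ is a numerator or denominator of $\bigcup_t \Psi_2\smcol{3}{3t+2}$. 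Crucially, the associated limit set has Hausdorff dimension $\delta_2 \approx 0.6045 > 1/2$, so one is in the regime where a local--global principle is expected. I would then record, using the congruence analysis behind Table~\ref{table:psi1obstructions} and Corollary~\ref{cor:bigcontfracnosquares}, that the only local obstruction on these values is that squares are excluded; every non-square residue class is attained. Hence ``admissible'' means ``large non-square'', and the target is: every large non-square is represented.

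\emph{Step 2 (circle method; the main term).} Next I would run the Bourgain--Kontorovich dispersion/circle method, as in \cite{BourgainKontorovichZarembaAnnals}, possibly after passing to a large finite subalphabet $\{4, 8, \ldots, 4M\} \subseteq 4\ZZ^+$ (still with $\delta > 1/2$) to obtain uniform contraction. For a growing parameter $N$, let $R_N(v)$ count the semigroup words $w$ of the allowed type with $\|w\|_\infty \asymp N$ whose associated denominator equals $v$, and write $R_N(v) = \int_0^1 \big(\sum_w e(\theta\, d(w))\big) e(-\theta v)\, d\theta$, splitting $[0,1]$ into major arcs around $a/q$ with $q \le N^\eta$ and a minor-arc remainder. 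On the major arcs, strong approximation for the relevant semigroup --- its reductions modulo $q$ (for $q$ odd) surject onto the congruence subgroups of $\SL(2, \ZZ/q\ZZ)$ cut out by the fixed level $4$ and the Kronecker-preservation constraint --- together with Hensley-type counting~\eqref{eqn:hensley} gives a main term of size $\asymp N^{2\delta - 1}\,\mathfrak{S}(v)$, with $\mathfrak{S}(v) \ge 0$ a singular series bounded below exactly when $v$ is congruence-admissible. Since $\delta > 1/2$, this main term grows with $N$ --- this is the quantitative reason the conjecture should hold, matching the ``multiplicity $\to \infty$'' discussion in the introduction.

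\emph{Step 3 (minor arcs; the main obstacle).} The heart of the matter is bounding the minor-arc contribution, where a spectral gap is essential: the Cayley/Schreier graphs of the semigroup modulo $q$ expand (Bourgain--Gamburd), and feeding this through the transfer operator of the underlying continued-fraction dynamics --- exactly as Bourgain and Kontorovich do for Zaremba --- should yield $\int_{\mathfrak m} \ll N^{2\delta - 1 - \kappa}$ for some $\kappa > 0$, but only after an $L^2$ (large-sieve) average over $v$. That delivers the density-one statement --- all but $o(X)$ non-square $v \le X$ are represented --- and I expect this much to be attainable once the existing technology is adapted to the subshift-of-finite-type structure here. The genuine obstacle is the final upgrade: passing from density one to \emph{every} sufficiently large non-square, and then down to the explicit threshold $10^{13}$. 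This requires a pointwise, rather than averaged, minor-arc bound valid for each individual $v$ --- precisely the barrier that leaves Zaremba's conjecture and the Apollonian local--global conjecture open. The extra flexibility here (the unconstrained coefficient $a_{n+1}$, and the freedom to enlarge the prefix alphabet within $4\ZZ^+$ while keeping $\delta$ well above $1/2$) is encouraging, and a proof would plausibly need to exploit it in the spirit of Kontorovich's Soddy-sphere argument~\cite{KontorovichSoddy} or the recent progress on Zaremba with thick alphabets, and then be made effective. As for the constant $10^{13}$: that is a computational threshold (the finitely many exceptional non-squares below it found by exhaustive search), whereas the analytic argument would only establish representability for $v$ beyond some enormous --- and likely ineffective --- constant $C$; closing the gap $10^{13} \le v \le C$ is itself a serious additional problem.
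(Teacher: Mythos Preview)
This statement is a \emph{conjecture}, not a theorem, and the paper does not prove it. The paper's only support is computational: a depth-first search enumerated all continued fractions of the required shape with denominator up to $B=6.5\times 10^{12}$ (roughly 50{,}000 CPU hours), and the largest missing non-square denominator found was about $4.1\times 10^{12}$ (Table~\ref{table:contfraclast}); the threshold $10^{13}$ is simply a round number safely above that. There is no argument in the paper to compare against.

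You recognize this correctly at the outset --- you say the statement ``lies beyond current technology'' --- and what you present is a research programme, not a proof. As such it is a sensible outline: the reduction to a thin-orbit problem, the Bourgain--Kontorovich circle method with a main term of size $N^{2\delta-1}$, and the identification of the pointwise minor-arc bound as the genuine obstacle are all the right moves, and you are honest that Step~3 is exactly the barrier that keeps Zaremba and the Apollonian local--global conjectures open. One small bookkeeping point in Step~1: the paper's own reduction (proof of Corollary~\ref{cor:bigcontfracnosquares}) passes through $\Psi_{4\ZZ^+}\smcol{3}{2+3k}$ rather than $\Psi_2\smcol{3}{3t+2}$, and then invokes Proposition~\ref{prop:psi2nomoreevencontfrac} to land in $\Psi_2$ up to swapping numerator and denominator; your version elides this but is essentially equivalent.
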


In Section \ref{sec:kronpres}, we cover some technical results concerning Kronecker symbols and their behaviour under M\"obius transformation, which proves Proposition~\ref{prop:psicharacterization}.  These results rule out squares in certain orbits of $\Psi$ without too much work, but to claim an interesting obstruction, we intend to show everything else sufficiently large will appear.  To this end, the very technical Section~\ref{sec:psi-many} applies the results of Section~\ref{sec:kronpres} to give sufficient conditions for integers to appear in orbits of $\Psi$; and in turn, we can then use these to give complete orbits in the following Section~\ref{sec:psi-few}, where the main results for $\Psi$ (namely, Theorems~\ref{thm:semigroupreciprocity} and \ref{thm:TableForPsi}) are proven. The explicit example of Theorem~\ref{thm:psi123orbit} is covered in Section~\ref{sec:explicit}. Subsemigroups of smaller Hausdorff dimension are covered in Sections~\ref{sec:thin} and \ref{sec:psi12}, and consequences for continued fractions are addressed in Section~\ref{sec:ctd}.  Some remarks related to finding sub\emph{groups} of $\SL(2, \ZZ)$ exhibiting reciprocity obstructions are considered in Section \ref{section:groups}.

\section{Behaviour of the Kronecker symbol under M\"{o}bius transformation}\label{sec:kronpres}

In this section, we prove Proposition~\ref{prop:psicharacterization}. Along the way, we develop the main machine (Proposition~\ref{prop:mobiuskron}), which is an expression for $\kron{ax+by}{cx+dy}$ in terms of $\kron{x}{y}$, and is used extensively throughout the paper.

Recall from the introduction that
    \begin{equation*}
    \Psi :=\left\{\genmtx\in\Gamma_1(4)^{\geq 0}:\forall x,y\in\ZZ^{\geq 0}\text{ with $y$ odd},\text{ we have }\kron{x}{y}=\kron{ax+by}{cx+dy}\right\}.
     \end{equation*}
As observed there, this is a sub-semigroup of $\Gamma_1(4)^{\ge 0}$.  In order to make it explicit, however, we wish to show that, equivalently,
\[\Psi=\left\{\genmtx\in\Gamma_1(4)^{\geq 0}:\kron{a}{b}=1\right\}.\]
This is the statement of 
Proposition~\ref{prop:psicharacterization}.  We begin by showing that, in $\Gamma_1(4)^{\ge 0}$, the Kronecker symbols of rows and columns all agree.

\begin{lemma}\label{lem:allkronequal}
	Let $\genmtx\in\Gamma_1(4)^{\geq 0}$. Then
	\[\kron{a}{b}=\kron{c}{d}=\kron{a}{c}=\kron{b}{d}.\]
\end{lemma}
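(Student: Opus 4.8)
The plan is to exploit the fact that $ad - bc = 1$ together with the congruence conditions $a \equiv d \equiv 1$, $c \equiv 0 \pmod 4$, and the nonnegativity of all entries, and then to push everything through the basic multiplicativity and reciprocity properties of the Kronecker symbol. The four quantities $\kron{a}{b}$, $\kron{c}{d}$, $\kron{a}{c}$, $\kron{b}{d}$ all make sense since consecutive entries of an $\SL(2,\ZZ)$ matrix are coprime, so each symbol is $\pm 1$ (with the usual caveat about a possibly-even lower argument, which I will need to track carefully). The idea is to establish the chain of equalities one link at a time.

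First I would handle $\kron{a}{b} = \kron{a}{c}$. Since $c \equiv 0 \pmod 4$, write the relevant comparison using $bc = ad - 1 \equiv ad - 1$; because $a \equiv 1 \pmod 4$ we have $ad \equiv d \pmod 4$, so $bc \equiv d - 1 \pmod 4$. Modulo the odd part, one wants $b \equiv $ (something) relating to $c$. The cleaner route: note $\kron{a}{b}\kron{a}{c} = \kron{a}{bc}$ by multiplicativity in the lower argument (valid since $a$ is odd and positive), and $bc = ad-1$, so $\kron{a}{bc} = \kron{a}{ad-1} = \kron{a}{-1} = \kron{-1}{a}$ using $ad \equiv 0 \pmod a$ and then $\kron{a}{-1}$ depends only on the sign of $a$, which is nonnegative — so this is $+1$. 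Hence $\kron{a}{b} = \kron{a}{c}$. The same style of manipulation, using $\kron{d}{b}\kron{d}{c}$ and $bc = ad - 1 \equiv -1 \pmod d$, should give $\kron{b}{d} = \kron{c}{d}$ after flipping arguments; here I must be careful about quadratic reciprocity sign factors $\kron{-1}{\cdot}$ and $(-1)^{\frac{m-1}{2}\frac{n-1}{2}}$, but the congruence $d \equiv 1 \pmod 4$ is exactly what kills those correction terms when $d$ sits in the lower slot.

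Next I would connect the two pairs. To get $\kron{a}{b} = \kron{b}{d}$ (or equivalently $\kron{a}{c} = \kron{c}{d}$), I would again use $ad - bc = 1$ to write $a \equiv bc \cdot (\text{unit}) \pmod{\text{stuff}}$ — more precisely, modulo $b$ we have $ad \equiv 1$, so $a \equiv d^{-1} \pmod b$, giving $\kron{a}{b} = \kron{d^{-1}}{b} = \kron{d}{b}$; then flip $\kron{d}{b}$ to $\kron{b}{d}$ using reciprocity, with the sign correction again trivial because $d \equiv 1 \pmod 4$ and we need to check the parity/sign of $b$. If $b$ is even or negative there are extra factors, but nonnegativity handles the sign, and if $b$ is even one argues via the $2$-part separately (noting $b$ even forces $a,d$ odd, consistent). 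Assembling: $\kron{a}{b} = \kron{a}{c}$, $\kron{a}{b} = \kron{b}{d}$, and $\kron{c}{d} = \kron{b}{d}$ together give all four equal.

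The main obstacle I anticipate is bookkeeping the edge cases of the Kronecker symbol: the behaviour of $\kron{x}{y}$ when $y$ is even or when one argument is $0$ or $1$, the sign conventions $\kron{x}{-1}$ and $\kron{0}{\pm 1} = 1$, $\kron{x}{1} = 1$, and making sure the reciprocity-flip steps are legitimate (Kronecker reciprocity is cleanest for positive odd arguments). The congruence conditions defining $\Gamma_1(4)$ are tailor-made to make the correction factors $(-1)^{\frac{m-1}{2}\cdot\frac{n-1}{2}}$ and $(-1)^{\frac{m^2-1}{8}}$ vanish, and nonnegativity of the entries removes all sign ambiguities from $\kron{\cdot}{-1}$; the real work is just to verify this uniformly, including degenerate matrices like $\sm{1}{b}{0}{1}$ and $\sm{1}{0}{c}{1}$ where some symbol is $\kron{\text{something}}{0}$ or $\kron{\text{something}}{1}$ and must be interpreted correctly (here $\kron{a}{b}$ with $b=0$ is $0$ unless $a = \pm 1$, so one checks these boundary matrices by hand). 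Once those conventions are pinned down, each equality is a two-line computation.
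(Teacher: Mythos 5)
Your plan is in the same spirit as the paper's proof (determinant relation $ad-bc=1$, multiplicativity, periodicity, reciprocity, with the $\Gamma_1(4)$ congruences killing sign corrections), but you decompose the four-fold equality into a different chain: you link $\kron{a}{b}=\kron{a}{c}$ and $\kron{b}{d}=\kron{c}{d}$ via the ``same top, product bottom'' trick $\kron{a}{bc}=\kron{a}{-1}=1$ and $\kron{d}{bc}=\kron{d}{-1}=1$, and then try to cross over with $\kron{a}{b}=\kron{b}{d}$. The paper instead proves $\kron{a}{b}=\kron{b}{d}$ and $\kron{a}{c}=\kron{c}{d}$ via $\kron{ad}{b}=\kron{ad}{c}=1$, and crosses over with $\kron{bc}{d}=\kron{-1}{d}=1$. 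Your first two links are correct as stated (periodicity of $\kron{a}{\cdot}$ and $\kron{d}{\cdot}$ modulo $a$ and $d$ respectively holds because $a\equiv d\equiv 1\pmod 4$, and nonnegativity gives $\kron{a}{-1}=\kron{d}{-1}=1$; the reciprocity flips $\kron{d}{b}\to\kron{b}{d}$ and $\kron{d}{c}\to\kron{c}{d}$ are legal because $d\equiv 1\pmod4$).

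The genuine gap is in your third link. You reduce $\kron{a}{b}$ via $a\equiv d^{-1}\pmod b$ and periodicity of $\kron{\cdot}{b}$ modulo $b$ --- but that periodicity fails when $b\equiv 2\pmod 4$, which can and does occur in $\Gamma_1(4)^{\ge 0}$ (the congruence conditions constrain $a,c,d$ but place no restriction on $b\bmod 4$). You flag this (``if $b$ is even one argues via the $2$-part separately'') but never identify the key fact that rescues the step, which is exactly what the paper's proof hinges on: since $c\equiv 0\pmod 4$, we have $4b\mid bc$, hence $ad=bc+1\equiv 1\pmod{4b}$; and $\kron{\cdot}{b}$ is \emph{always} periodic modulo $4b$, so $\kron{ad}{b}=\kron{1}{b}=1$ uniformly and $\kron{a}{b}=\kron{d}{b}=\kron{b}{d}$. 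Without invoking $4\mid c$ you cannot conclude $\kron{a}{2}=\kron{d}{2}$ for the $2$-part (this requires $ad\equiv 1\pmod 8$, which is supplied precisely by $8\mid bc$ when $b$ is even and $4\mid c$). An easier fix, since you already note the link is interchangeable with $\kron{a}{c}=\kron{c}{d}$: prove that one instead. Because $c\equiv 0\pmod 4$, the bad residue $2\pmod 4$ never arises in that slot, and the argument $\kron{a}{c}\kron{d}{c}=\kron{ad}{c}=\kron{1}{c}=1$ followed by reciprocity goes through without any $2$-part surgery.
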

\begin{proof}
Since all entries are nonnegative, the Kronecker symbol is multiplicative in the numerator, and $\kron{4n+m}{n} = \kron{m}{n}$.  Furthermore, if $n \not\equiv 2 \pmod{4}$, then $\kron{n+m}{n} = \kron{m}{n}$. If at least one of $m,n$ is $1\pmod{4}$, then quadratic reciprocity implies that $\kron{m}{n}=\kron{n}{m}$.
We compute:
\[
	\kron{a}{b}=\kron{d}{b}\kron{ad}{b}=\kron{d}{b}\kron{bc+1}{b}=\kron{d}{b}=\kron{b}{d},
\]
noting that $4b \mid bc$, and $d\equiv 1\pmod{4}$.
Similarly, $\kron{a}{c}=\kron{c}{d}$. Finally,
\[\kron{b}{d}
 =\kron{c}{d}\kron{bc}{d}=\kron{c}{d}\kron{ad-1}{d}=\kron{c}{d}\kron{-1}{d}=\kron{c}{d},
\]
since $d\equiv 1\pmod{4}$.
\end{proof}

To prove Proposition~\ref{prop:psicharacterization}, it suffices to show that if $\genmtx \in \Gamma_1(4)^{\ge 0}$, then
\[
\kron{ax+by}{cx+dy} = \kron{a}{b}\kron{x}{y},
\]
when $x,y$ are coprime nonnegative integers with $y$ odd.

We will give a brief demonstration of this property in one case, before giving the full technical details. To this end, suppose for simplicity that $(x,y) \equiv (0,1) \pmod{4}$ and $\gcd(x,d)=1$. In particular, we have
\[
c, x \equiv 0 \pmod{4}, \quad
a, d, y, cx+dy \equiv 1 \pmod{4},
\]
so that a Kronecker symbol involving an entry from the second list $a, d, y, cx+dy$ will take the same value when its entries are swapped, by quadratic reciprocity. In addition, a Kronecker symbol with any of these entries is ``modular'' in the other entry, i.e., $\kron{m+n}{n} = \kron{m}{n}=\kron{m}{m+n}$, and they are multiplicative in either entry. Finally, note that $d(ax+by) - b(cx+dy) = x$. Then,
\begin{align*}
\kron{ax+by}{cx+dy}
&=
\kron{d}{cx+dy}\kron{d(ax+by)}{cx+dy} \\
&=
\kron{d}{cx+dy}\kron{d(ax+by)-b(cx+dy)}{cx+dy} \\
&=
\kron{d}{cx+dy}\kron{x}{cx+dy} 
=
\kron{d}{cx}\kron{x}{dy} \\
&=
\kron{d}{c}\kron{d}{x}\kron{x}{d} \kron{x}{y} = 
\kron{a}{b}\kron{x}{y}.
\end{align*}

The next proposition gives the action of $\SL(2, \ZZ)^{\geq 0}$ on the Kronecker symbol in greatest generality. It follows the same outline as the above computation.

\begin{proposition}\label{prop:mobiuskron}
	Define $\opart{n}$ to be the odd part of $n$. Let $\genmtx\in\SL(2, \ZZ)^{\geq 0}$, let $x, y$ be nonnegative integers, and assume that $\gcd(x, d)=1$. Let
	\[
	A=\frac{\opart{x}-1}{2},\quad B=\frac{\opart{d}-1}{2},\quad C=\frac{\opart{(cx+dy)}-1}{2},\quad D=\frac{\opart{y}-1}{2},\quad \alpha = AB+AC+BC+AD,
	\]
	and let
	\[
	\mu_1 = \begin{cases} 
		\kron{cdxy+1}{2} & \text{if $2\mid\mid d$ or $2\mid\mid x$;}\\
		1 & \text{else;}
		\end{cases}
	\quad \mu_2 = \begin{cases}
		\kron{bx(cx+dy)+1}{2} & \text{if $2\mid\mid cx+dy$;}\\
		1 & \text{else.}
	\end{cases}
	\]
	Then
	\[\kron{ax+by}{cx+dy}=(-1)^{\alpha}\mu_1\mu_2\kron{c}{d}\kron{x}{y}.\]
\end{proposition}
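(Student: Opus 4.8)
The plan is to imitate the worked special case above in full generality, carefully tracking the two kinds of error that the simplifying hypotheses there suppressed: the quadratic reciprocity signs, which will accumulate into the factor $(-1)^{\alpha}$, and the anomalous behaviour of $\kron{\cdot}{2}$ (it is neither periodic modulo $2$ nor governed by the naive reciprocity law), which will accumulate into $\mu_1$ and $\mu_2$.

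First, some reductions. If $\gcd(x,y)>1$ then the common factor divides both $ax+by$ and $cx+dy$ and also makes $\kron{x}{y}=0$, so both sides vanish; hence we may assume $\gcd(x,y)=1$. From $ad-bc=1$ one gets $d\ge 1$ and $\gcd(c,d)=1$, and combined with the hypothesis $\gcd(x,d)=1$ and with $\gcd(x,y)=1$ this yields $\gcd(cx+dy,d)=\gcd(cx+dy,x)=1$. (The truly degenerate inputs $x=0$ and $y=0$, where the odd part of $0$ would appear, are forced by $\gcd(x,d)=1$ into a short finite list and can be checked by hand.) The engine of the argument is the determinant identity $d(ax+by)-b(cx+dy)=x$, i.e.\ $d(ax+by)=x+b(cx+dy)$ with $b\ge 0$. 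Since $\gcd(d,cx+dy)=1$ we may multiply the numerator by $d^2$, obtaining
\[
\kron{ax+by}{cx+dy}=\kron{d}{cx+dy}\kron{d(ax+by)}{cx+dy}=\kron{d}{cx+dy}\kron{x+b(cx+dy)}{cx+dy}.
\]
I would then proceed in three moves. (i) Strip the term $b(cx+dy)$ off the numerator: this is harmless unless $cx+dy\equiv 2\pmod 4$, in which case it contributes a correction supported at $2$, to be matched with $\mu_2$. (ii) In each of $\kron{d}{cx+dy}$ and $\kron{x}{cx+dy}$, flip by quadratic reciprocity on the odd parts, paying $(-1)^{BC}$ and $(-1)^{AC}$ respectively, and then reduce the numerators via $cx+dy\equiv cx\pmod d$ and $cx+dy\equiv dy\pmod x$ to reach $\kron{c}{d}\kron{x}{d}$ and $\kron{d}{x}\kron{y}{x}$; the latter reduction costs a correction at $2$ exactly when $x\equiv 2\pmod 4$, and its analogue a correction exactly when $d\equiv 2\pmod 4$. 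Two small facts keep this clean: for odd $n$ one has $\kron{2}{n}\kron{n}{2}=1$, so the $2$-adic factors generated by the reciprocity flips cancel in pairs; and a $2$-adic valuation $\ge 2$ in $d$, $x$, or $cx+dy$ makes its residual $2$-adic factor trivial (a perfect square when the valuation is $2$, and a congruence modulo $8$ kills it when the valuation is $\ge 3$), which is why the conditions cutting in $\mu_1,\mu_2$ are ``exactly divisible by $2$'' rather than ``odd valuation''. (iii) Assemble, using the Kronecker reciprocities $\kron{x}{d}\kron{d}{x}=(-1)^{AB}$ and $\kron{y}{x}=(-1)^{AD}\kron{x}{y}$; this produces
\[
\kron{ax+by}{cx+dy}=(-1)^{AB+AC+BC+AD}\bigl(\text{$2$-adic correction}\bigr)\kron{c}{d}\kron{x}{y}=(-1)^{\alpha}\bigl(\text{$2$-adic correction}\bigr)\kron{c}{d}\kron{x}{y}.
\]

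The remaining and most delicate step is to identify the accumulated $2$-adic correction with $\mu_1\mu_2$. This is a finite case analysis on the $2$-adic valuations of $x$, $d$, and $cx+dy$: since $\gcd(x,d)=1$ and $\gcd(x,y)=1$, at most one of $x,d$ and at most one of $x,y$ is even, which then pins down the parity of $cx+dy$, leaving only a handful of cases. In each case one tracks residues modulo $8$ to verify that the product of the correction from move (i) and the corrections from move (ii) is precisely $\mu_1\mu_2$ with the stated closed forms $\kron{cdxy+1}{2}$ and $\kron{bx(cx+dy)+1}{2}$. I expect pinning down these exact forms — as opposed to merely ``some sign at $2$'' — to be the main obstacle; the rest is a long but mechanical exercise in the multiplicativity, periodicity, and reciprocity of the Kronecker symbol on nonnegative arguments.
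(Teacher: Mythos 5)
Your outline mirrors the paper's proof essentially step for step: multiply the numerator by $d$, apply the determinant identity $adx+bdy-b(cx+dy)=x$, then repeatedly flip by quadratic reciprocity on odd parts and reduce numerators modulo the denominator, accumulating the sign $(-1)^{AB+AC+BC+AD}$ together with three residual $2$-adic correction factors (one for each reduction, nontrivial exactly when the relevant denominator is $\equiv 2\pmod 4$). The framework is therefore sound and is the paper's.

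The genuine gap is precisely the step you flag as ``the main obstacle'': you assert that the accumulated $2$-adic correction equals $\mu_1\mu_2$ and describe it as a finite, mechanical residue check, but you do not produce the closed forms, and those closed forms \emph{are} the content of the proposition. The paper closes this by computing each correction factor directly. For the reduction where $cx+dy\equiv 2\pmod 4$, $x$ is forced odd and, using $\kron{kn+m}{2n}=\kron{kn+m}{2}\kron{m}{2}\kron{m}{2n}$ for odd $n$, one gets $\epsilon_2=\kron{adx+bdy}{2}\kron{x}{2}=\kron{bx(cx+dy)+x^2}{2}=\kron{bx(cx+dy)+1}{2}=\mu_2$ since $x^2\equiv 1\pmod 8$. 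Similarly the two reductions with denominators $d$ and $x$ each evaluate to $\kron{cxdy+1}{2}$ when nontrivial (using that $cx$, resp.\ $dy$, is odd there), and since $\gcd(x,d)=1$ at most one of $2\mid\mid x$, $2\mid\mid d$ can hold, their product is exactly $\mu_1$. Supplying these identifications, rather than gesturing at a case analysis, is what turns your sketch into a proof.
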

\begin{proof}
	If $x, y$ are not coprime, the result is immediate. Otherwise, as $\gcd(x, d)=1$, $\gcd(d, cx+dy)=1$. Then
	\begin{multline*}
		\kron{ax+by}{cx+dy}=\kron{d}{cx+dy}\kron{adx+bdy}{cx+dy}=\epsilon_1\kron{cx+dy}{d}\epsilon_2\kron{adx+bdy-b(cx+dy)}{cx+dy}\\
		=\epsilon_1\epsilon_2\epsilon_3\kron{cx}{d}\kron{x}{cx+dy},
	\end{multline*}
	where, by quadratic reciprocity, $\epsilon_1=(-1)^{BC}$, and $\epsilon_2$ and $\epsilon_3$ are as follows.    If $cx+dy\not\equiv 2\pmod{4}$, then $\epsilon_2=1$. Otherwise, $x$ is necessarily odd, and we have\footnote{Recall that if $n$ is odd, $\kron{kn+m}{2n} = \kron{kn+m}{2}\kron{kn+m}{n} = \kron{kn+m}{2}\kron{m}{n} = \kron{kn+m}{2}\kron{m}{2}\kron{m}{2n}$.}
	\[
	\epsilon_2=\kron{adx+bdy}{2}\kron{x}{2}=\kron{(bc+1)x+bdy}{2}\kron{x}{2}=\kron{bx(cx+dy)+x^2}{2}=\mu_2,
	\]
	since $x^2\equiv 1\pmod{8}$ and $\kron{\cdot}{2}$ is well defined modulo $8$. Similarly, the term $\epsilon_3$ is $1$ if $d\not\equiv 2\pmod{4}$. Otherwise,
	\[\epsilon_3=\kron{cx+dy}{2}\kron{cx}{2}=\kron{cdxy+(cx)^2}{2}=\kron{cxdy+1}{2}=\mu_1,\]
	as $cx$ is necessarily odd (it is coprime to $d$). Moving forward, 
	\[
	\kron{cx}{d}\kron{x}{cx+dy}=\kron{c}{d}\kron{x}{d}\epsilon_4\kron{cx+dy}{x}=\epsilon_4\kron{c}{d}\kron{x}{d}\epsilon_5\kron{dy}{x}=\epsilon_4\epsilon_5\kron{c}{d}\kron{x}{d}\kron{d}{x}\kron{y}{x}.
	\]
	It is immediate that $\epsilon_4=(-1)^{AC}$, and $\epsilon_5=1$ if $x\not\equiv 2\pmod{4}$. Otherwise,
	\[\epsilon_5=\kron{cx+dy}{2}\kron{dy}{2}=\kron{cdxy+(dy)^2}{2}=\kron{cxdy+1}{2}=\mu_1,\]
	since $dy$ is necessarily odd (using $\gcd(x,y)=1$). Note that we can combine $2\mid\mid x$ with $2\mid\mid d$ in the statement since both conditions cannot occur simultaneously.
	
	Finally,
	\[
	\kron{c}{d}\kron{x}{d}\kron{d}{x}\kron{y}{x}=\kron{c}{d}\epsilon_6\epsilon_7\kron{x}{y},
	\]
	where $\epsilon_6=(-1)^{AB}$ and $\epsilon_7=(-1)^{AD}$. Combining this all together gives the proposition.
\end{proof}

While Proposition \ref{prop:mobiuskron} has complicated conditions, they often collapse when certain modulo 4 conditions are added to some of $a, b, c, d$. Code to computationally check this formula is found in \cite{GHSemigroup}.

To prove Proposition~\ref{prop:psicharacterization}, we would like an analogous theorem where we do not have to assume that $\gcd(x, d)=1$; the rest of the section is devoted to this.

\begin{lemma}\label{lem:krondenomshift}
	Let $x,y$ be positive coprime integers and let $k$ be a nonnegative integer.  Suppose either $x\not\equiv 3\pmod{4}$ or $v_2(k)\geq v_2(y)$, where $v_2(n)$ denotes the largest power of 2 dividing $n$. Then
	\[\kron{x}{y}=\kron{x}{y+4kx}.\]
\end{lemma}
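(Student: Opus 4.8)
The plan is to recognize the substitution $y \mapsto y + 4kx$ as the action of the matrix $\sm{1}{0}{4k}{1} \in \SL(2,\ZZ)^{\ge 0}$ on the column $\smcol{x}{y}$, and then apply Proposition~\ref{prop:mobiuskron} with $(a,b,c,d) = (1,0,4k,1)$. The hypothesis $\gcd(x,d) = \gcd(x,1) = 1$ of that proposition holds automatically, and $\kron{c}{d} = \kron{4k}{1} = 1$, so we immediately get
\[
\kron{x}{y+4kx} = (-1)^{\alpha}\mu_1\mu_2\kron{x}{y}.
\]
It remains only to show that the correction factor $(-1)^{\alpha}\mu_1\mu_2$ equals $1$ under our hypotheses. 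Here $B = \tfrac{\opart{d}-1}{2} = 0$, so $\alpha = AB + AC + BC + AD = A(C+D)$, with $A = \tfrac{\opart{x}-1}{2}$, $C = \tfrac{\opart{(4kx+y)}-1}{2}$, $D = \tfrac{\opart{y}-1}{2}$.

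First I would dispatch $\mu_1$ and $\mu_2$ once and for all. Since $b = 0$, whenever the formula for $\mu_2$ is active its argument is $\kron{1}{2} = 1$, so $\mu_2 = 1$ always. The factor $\mu_1$ is active only when $2 \mid\mid x$ (the case $2\mid\mid d$ is impossible, as $d=1$), and in that case writing $x = 2\opart{x}$ gives $cdxy + 1 = 4kxy + 1 = 8k\opart{x}y + 1 \equiv 1 \pmod 8$, so $\mu_1 = \kron{1}{2} = 1$. Hence $\mu_1 = \mu_2 = 1$ in all cases, and the whole problem reduces to showing $\alpha = A(C+D)$ is even.

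Then I would finish with a short case split on $x$. If $x$ is even, then $y$ is odd, $4kx + y$ is odd, so $\opart{y} = y$, $\opart{(4kx+y)} = 4kx+y$, and $C - D = \tfrac{(4kx+y)-y}{2} = 2kx$ is even; hence $C+D$ is even and $\alpha$ is even. If $x \equiv 1 \pmod 4$, then $A$ is even and $\alpha$ is even. The only remaining case is $x \equiv 3 \pmod 4$, where we invoke the hypothesis $v_2(k) \ge v_2(y) =: t$; since $x$ is odd, $v_2(4kx) = 2 + v_2(k) \ge t+2$, so $v_2(4kx+y) = t$ and $\opart{(4kx+y)} = 4x(k/2^t) + \opart{y} \equiv \opart{y} \pmod 4$, whence $C \equiv D \pmod 2$ and $\alpha = A(C+D)$ is even.

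I expect no genuine obstacle: all the content is in Proposition~\ref{prop:mobiuskron}, and the lemma is bookkeeping on top of it. The one place that deserves care is the case $x \equiv 3 \pmod 4$, where the hypothesis $v_2(k) \ge v_2(y)$ is precisely what guarantees that adding $4kx$ leaves the odd part of $y$ unchanged modulo $4$ (one can check that for $x = 3$, $y = 2$ the identity holds for $k = 2$ but fails for $k = 1$, so the condition cannot be removed). It may be cleanest to state the case analysis in this order—$x$ even, then $x\equiv 1$, then $x \equiv 3$—since the hypothesis is only ever used in the last case.
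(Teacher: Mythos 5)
Your proof is correct, and it arrives at the same sign to kill as the paper's does, but it routes through different machinery. The paper's proof is a short direct computation: it applies quadratic reciprocity by hand to flip $\kron{x}{y+4kx}$ to $\kron{y+4kx}{x}$, uses periodicity in the top argument (since $4kx$ is a multiple of $x$ when $x$ is odd and of $8\opart{x}$ when $x$ is even) to replace $y+4kx$ by $y$, and flips back, collecting the sign $(-1)^{\alpha_1+\alpha_2}$ with $\alpha_1+\alpha_2 = \frac{\opart{x}-1}{2}\left(\frac{\opart{(y+4kx)}-1}{2}+\frac{\opart{y}-1}{2}\right)$ — which is exactly your $A(C+D)$. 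You instead specialize Proposition~\ref{prop:mobiuskron} to $\sm{1}{0}{4k}{1}$, which reuses the paper's central machine rather than re-deriving the needed fragment of reciprocity, at the small extra cost of dispatching $\mu_1$ and $\mu_2$ (which you handle correctly: $\mu_2=1$ trivially since $b=0$, and $\mu_1=1$ because $4kxy\equiv 0\pmod 8$ when $2\mid\mid x$). The closing three-way case split on $x\pmod 4$, and the use of the hypothesis $v_2(k)\geq v_2(y)$ only in the $x\equiv 3$ case, is identical in both arguments; your counterexample $(x,y,k)=(3,2,1)$ correctly shows the hypothesis is not removable. There is no circularity: Proposition~\ref{prop:mobiuskron} is proved before this lemma and does not depend on it. Both proofs are valid; the paper's is self-contained and a few lines shorter, while yours is conceptually tidy in that it treats the lemma as a special case of the general transformation law.
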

\begin{proof}
	From quadratic reciprocity, we have
	\[\kron{x}{y+4kx}=(-1)^{\alpha_1}\kron{y+4kx}{x}=(-1)^{\alpha_1}\kron{y}{x}=(-1)^{\alpha_1+\alpha_2}\kron{x}{y},\]
	where
	\[\alpha_1=\frac{\opart{x}-1}{2} \cdot \frac{\opart{(y+4kx)}-1}{2},\quad \alpha_2=\frac{\opart{x}-1}{2} \cdot \frac{\opart{y}-1}{2}.\]
	If $x\equiv 1\pmod{4}$ then we are done.  Since $y+4kx\equiv y\pmod{4}$, if $y$ is odd, then $\alpha_1=\alpha_2$, and we are again done. This leaves the case of $x\equiv 3\pmod{4}$. Since $v_2(k)\geq v_2(y)$, the odd parts of $y$ and $y+4kx$ are equivalent modulo $4$, which completes the proof.
\end{proof}

We can now show that the action of $\Gamma_1(4)^{\geq 0}$ on Kronecker symbols is relatively well-behaved.  This will be used to prove Proposition~\ref{prop:psicharacterization} and Theorem~\ref{thm:semigroupreciprocity}.

\begin{lemma}\label{lem:kronpreserved}
	Let $\genmtx\in\Gamma_1(4)^{\geq 0}$ and let $x,y\in\ZZ^{\geq 0}$ be coprime integers, where $y\not\equiv 2\pmod{4}$ or $b\equiv 0\pmod{4}$. Then
	\[\kron{ax+by}{cx+dy}=(-1)^{\alpha}\kron{c}{d}\kron{x}{y},\text{ where } \alpha=\frac{\opart{x}-1}{2}\left(\frac{\opart{y}-1}{2}+\frac{\opart{(cx+dy)}-1}{2}\right).\]
	In particular, if $y$ is odd or $(x, y)\equiv (1, 0)\pmod{4}$, then $\kron{ax+by}{cx+dy}=\kron{a}{b}\kron{x}{y}$.
\end{lemma}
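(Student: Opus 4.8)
The plan is to bootstrap from Proposition~\ref{prop:mobiuskron}, which already computes $\kron{ax+by}{cx+dy}$, but only under the extra hypothesis $\gcd(x,d)=1$. So I would first settle the case $\gcd(x,d)=1$ directly from that proposition, and then remove the hypothesis by replacing $\genmtx$ with a suitable matrix of $\Gamma_1(4)^{\geq 0}$ to which the first case applies.

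\emph{The case $\gcd(x,d)=1$.} Here I would specialize Proposition~\ref{prop:mobiuskron} using the defining congruences of $\Gamma_1(4)$. Since $d\equiv 1\pmod{4}$ the integer $d$ is odd, so $\opart{d}=d$ and $B=\frac{d-1}{2}$ is even; this kills the terms $AB$ and $BC$ in $\alpha$, leaving $\alpha\equiv A(C+D)\pmod{2}$, exactly the exponent in the statement. It then remains to check $\mu_1=\mu_2=1$. Because $c\equiv 0\pmod{4}$ we have $cx+dy\equiv dy\pmod{4}$, with $dy$ of the same parity as $y$; hence the only way to have $2\mid\mid cx+dy$ is $y\equiv 2\pmod{4}$, in which case the hypothesis forces $b\equiv 0\pmod{4}$, and then $bx(cx+dy)\equiv 0\pmod{8}$, so $\mu_2=1$. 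Likewise, $2\mid\mid x$ together with $c\equiv 0\pmod{4}$ gives $cdxy\equiv 0\pmod{8}$, so $\mu_1=1$. This yields $\kron{ax+by}{cx+dy}=(-1)^{\alpha}\kron{c}{d}\kron{x}{y}$.

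\emph{Removing the hypothesis.} For general coprime $x,y$ (the degenerate cases $x=0$ or $y=0$ checked by hand), put $P=ax+by$ and $Q=cx+dy$, so $\gcd(P,Q)=1$. Since $ad-bc=1$ forces $\gcd(b,d)=1$, and $\gcd(x,y)=1$ with $d$ odd, the Chinese Remainder Theorem supplies $\kappa\geq 0$ with $\gcd(x,d+4\kappa b)=1$ and, simultaneously, $v_2(\kappa)\geq v_2(Q)$. Set $M:=\sm{1}{0}{4\kappa}{1}\genmtx=\sm{a}{b}{c+4\kappa a}{d+4\kappa b}\in\Gamma_1(4)^{\geq 0}$; its $(1,2)$-entry is still $b$ and $\gcd(x,d+4\kappa b)=1$, so the first case applies to $M$ and $\smcol{x}{y}$. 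As $M\smcol{x}{y}=\smcol{P}{Q+4\kappa P}$, I obtain $\kron{P}{Q+4\kappa P}=(-1)^{\alpha_\kappa}\kron{c+4\kappa a}{d+4\kappa b}\kron{x}{y}$, and it remains to reconcile this with the claimed formula. First, Lemma~\ref{lem:krondenomshift}, whose hypothesis is guaranteed by $v_2(\kappa)\geq v_2(Q)$, gives $\kron{P}{Q+4\kappa P}=\kron{P}{Q}$. Second, the same $2$-adic condition on $\kappa$ makes $\opart{(Q+4\kappa P)}\equiv\opart{Q}\pmod{4}$, hence $\alpha_\kappa\equiv\alpha\pmod{2}$. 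Third, $\kron{c+4\kappa a}{d+4\kappa b}=\kron{a}{b}=\kron{c}{d}$ by Lemma~\ref{lem:allkronequal} applied to $M$ and to $\genmtx$. Combining the three gives $\kron{ax+by}{cx+dy}=(-1)^{\alpha}\kron{c}{d}\kron{x}{y}$. For the ``in particular'' clause: if $y$ is odd then $cx+dy$ is odd and $cx+(d+1)y\equiv 2\pmod{4}$, so $\frac{\opart{y}-1}{2}+\frac{\opart{(cx+dy)}-1}{2}$ is even; if instead $(x,y)\equiv(1,0)\pmod{4}$ then $\frac{\opart{x}-1}{2}$ is even. Either way $(-1)^\alpha=1$, and $\kron{c}{d}=\kron{a}{b}$ by Lemma~\ref{lem:allkronequal}.

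I expect the main obstacle to be not any single step but the cumulative sign bookkeeping: checking that the correction factors $\mu_1,\mu_2$ of Proposition~\ref{prop:mobiuskron} collapse precisely under the hypothesis on $(y,b)$, and choosing the auxiliary $\kappa$ so that $\gcd(x,d+4\kappa b)=1$ and the inequality $v_2(\kappa)\geq v_2(Q)$ hold at once, while keeping track of how the odd part of the denominator, and hence the sign $(-1)^{\alpha_\kappa}$, changes under the passage from $\genmtx$ to $M$.
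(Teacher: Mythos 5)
Your proof is correct and takes essentially the same route as the paper: specialize Proposition~\ref{prop:mobiuskron} in the $\gcd(x,d)=1$ case, then reduce to it by left-multiplying $\genmtx$ by a power of $R$ that is a high power of $2$ (you use $R^{4\kappa}$ with $v_2(\kappa)\geq v_2(cx+dy)$; the paper uses $R^{2^r k}$ with $r$ chosen large enough), invoking Lemmas~\ref{lem:krondenomshift} and \ref{lem:allkronequal} to see the Kronecker symbols and the sign $(-1)^\alpha$ are unchanged. Your write-up makes explicit the collapse of $\mu_1,\mu_2$ and the choice of auxiliary $\kappa$, which the paper leaves terse, but the underlying argument is identical.
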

\begin{proof}
	If $\gcd(x, d) = 1$, then the first statement follows immediately from Proposition \ref{prop:mobiuskron}.
	
	In general, we will show that we can always reduce to this case.	Consider replacing $\genmtx$ by $\sm{a}{b}{c+2^rka}{d+2^rkb}\in\Gamma_1(4)^{\geq 0}$ for some non-negative integer $k$ and a fixed integer $r\geq 2$. Choose $r$ large enough so that $\opart{(cx+dy)}\equiv \opart{((c+2^rka)x+(d+2^rkb)y)}\pmod{4}$, and $r\geq v_2(cx+dy) + 2$ if $ax+by\equiv 3\pmod{4}$. By Lemma \ref{lem:krondenomshift}, we have
	\[\kron{ax+by}{cx+dy}=\kron{ax+by}{(c+2^rka)x+(d+2^rkb)y}.\]
	Since $\sm{a}{b}{c+2^rka}{d+2^rkb}\in\Gamma_1(4)^{\geq 0}$, Lemma~\ref{lem:allkronequal} gives
	\[\kron{c+2^rka}{d+2^rkb}=\kron{a}{b}=\kron{c}{d},\]
	so it suffices to prove the proposition for this new matrix. As $d$ is odd, $\gcd(d, 2^rb)=1$, so there exists a non-negative integer $k$ such that $d+k(2^rb)$ is coprime to $x$. By using this value of $k$, we have reduced to the case of $\gcd(x, d)=1$, completing the proof.
	
	The final sentence follows from Lemma \ref{lem:allkronequal} and noting that $\alpha$ is even in the listed cases.
\end{proof}

Proposition~\ref{prop:psicharacterization} follows as a corollary, since if $y$ is odd, we have
\begin{equation}
\label{eqn:kronchange}
\kron{ax+by}{cx+dy}=\kron{a}{b}\kron{x}{y},
\end{equation}
which is always equal to $\kron{x}{y}$ if and only if $\kron{a}{b}=1$.

There are some interesting consequences of Proposition~\ref{prop:psicharacterization} in the language of continued fractions.

\begin{corollary}\label{cor:QR}
Let $S \subseteq (0,1)$ be the subset of rational numbers $s/t$ in lowest form such that $\kron{s}{t} = 1$, and $(s,t)\equiv (1,0)\pmod{4}$. Then if $[0;a_1,a_2,\ldots,a_{2m-1}]$ and $[0;b_1,b_2,\ldots,b_{2n-1}]$ are the even continued fraction expansions for two elements of $S$, then the concatenation $[0;a_1,a_2,\ldots,a_{2m-2}, a_{2m-1}+b_1,b_2,\ldots,b_{2n-1}]$ is the even continued fraction expansion for some element of $S$.
\end{corollary}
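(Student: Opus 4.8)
The plan is to reinterpret the concatenation operation as a matrix product via \eqref{eqn:evencontfrac}, and then to read off the conclusion from Theorem~\ref{thm:psicharacterization} and Lemma~\ref{lem:allkronequal}.

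First I would show that if $s/t = [0;a_1,\ldots,a_{2m-1}]$ is an element of $S$, then the matrix $M := R^{a_1}L^{a_2}\cdots L^{a_{2m-2}}R^{a_{2m-1}}$ already lies in $\Psi$. By \eqref{eqn:evencontfrac} (with $a_0=0$) we have $M\smcol{1}{0}=\smcol{s}{t}$, and since $M$ is a product of the nonnegative matrices $L,R$ it lies in $\SL(2,\ZZ)^{\geq 0}$ with first column $\smcol{s}{t}$. Writing $M=\sm{s}{b}{t}{d}$, the determinant relation $sd-tb=1$ together with $s\equiv 1,\ t\equiv 0\pmod 4$ forces $d\equiv 1\pmod 4$; as $\Gamma_1(4)$ imposes no congruence on the upper-right entry, this already gives $M\in\Gamma_1(4)^{\geq 0}$. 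Lemma~\ref{lem:allkronequal} then yields $\kron{s}{b}=\kron{s}{t}=1$, the last equality by the definition of $S$, so $M\in\Psi$ by Theorem~\ref{thm:psicharacterization}.

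Next, let $M_1,M_2$ be the even-CF matrices of the two given elements of $S$, both in $\Psi$ by the previous step. The concatenation $[0;a_1,\ldots,a_{2m-2},a_{2m-1}+b_1,b_2,\ldots,b_{2n-1}]$ is a continued fraction with an even number, $2m+2n-2$, of partial quotients, all positive (with middle quotient $a_{2m-1}+b_1\geq 2$); by \eqref{eqn:evencontfrac} it corresponds to the matrix $R^{a_1}\cdots R^{a_{2m-1}+b_1}L^{b_2}\cdots R^{b_{2n-1}}$, which factors as $M_1M_2$ via $R^{a_{2m-1}+b_1}=R^{a_{2m-1}}R^{b_1}$. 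Since $\Psi$ is a semigroup, $M_1M_2\in\Psi$; write $M_1M_2=\sm{p}{p''}{q}{q''}$, so that $\smcol{p}{q}=M_1M_2\smcol{1}{0}$, $\gcd(p,q)=1$, and the concatenation equals $p/q$. From $M_1M_2\in\Gamma_1(4)$ we get $p\equiv 1,\ q\equiv 0\pmod 4$; from $M_1M_2\in\Psi$ and Lemma~\ref{lem:allkronequal} we get $\kron{p}{q}=\kron{p}{p''}=1$; and $p/q\in(0,1)$ since it is the value of a continued fraction $[0;\ldots]$ with positive partial quotients. Hence $p/q\in S$, and since the concatenated expansion has even length it is precisely the even continued fraction expansion of $p/q$.

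The computations are routine; the single step needing care is the first, verifying that the even-CF matrix of an element of $S$ is itself in $\Psi$. The pitfall to avoid there is over-constraining $\Gamma_1(4)$, whose definition restricts only $a,d\pmod 4$ and $c\pmod 4$ and says nothing about $b$; once $M\in\Gamma_1(4)^{\geq 0}$ is known, the defining condition $\kron{a}{b}=1$ of $\Psi$ follows for free from Lemma~\ref{lem:allkronequal} and the hypothesis $\kron{s}{t}=1$. A minor additional check is that the concatenated list is a legitimate even continued fraction (even length, all partial quotients $\geq 1$, with $a_{2m-1}+b_1\geq 2$), which is immediate.
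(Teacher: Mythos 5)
Your proposal is correct and follows essentially the same approach as the paper's proof: interpret the concatenation as the matrix product $M_1M_2$, show each $M_i$ lies in $\Psi$, use that $\Psi$ is a semigroup, and read off the conclusion from the first column. The paper's version is considerably terser — it just asserts "$M_i \in \Psi$ by the definition of $S$" — whereas you supply the details (determinant relation forcing $d \equiv 1 \pmod 4$, membership in $\Gamma_1(4)^{\ge 0}$, Lemma~\ref{lem:allkronequal} transferring $\kron{s}{t}=1$ to $\kron{s}{b}=1$, then Theorem~\ref{thm:psicharacterization}); these fill a genuine gap in the exposition and are exactly the right chain of deductions.
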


\begin{proof}
    Let $\frac{s_1}{t_1} = [0; a_1, a_2, \ldots, a_{2m-1}], \frac{s_2}{t_2} = [0; b_1, b_2, \ldots, b_{2m-1}] \in S$, and define \[
    M_1=L^0R^{a_1}L^{a_2}\cdots R^{a_{2m-1}}, \quad
    M_2=L^0R^{b_1}L^{b_2}\cdots R^{b_{2n-1}}.
    \]
    Since $\smcol{s_i}{t_i}=M_i\smcol{1}{0}$, we have $M_i=\sm{s_i}{b_i}{t_i}{d_i}\in\Psi$ by the definition of $S$. Thus $M_1M_2 \in \Psi$.
    The rational number formed by the first column is thus an element of $S$.
\end{proof}

\section{Sufficient conditions for integers to be represented by orbits of $\Psi$}
\label{sec:psi-many}

In this section, we give a series of lemmas providing sufficient conditions for integers to appear in orbits of $\Psi$.  The results depend on the Kronecker symbol properties of $\Psi$ from the previous section.  The general form of the conditions is as follows.  Consider a non-negative integer $n \equiv y \pmod{4}$ (respectively $x \pmod{\gcd(y, 4)}$).  Then $n$ will appear as a denominator (respectively, numerator) in $\Psi \smcol{x}{y}$ if the following holds:  any sequence of $\lfloor\frac{n}{8xy}\rfloor$ consecutive integers contains terms which have Kronecker symbols of both values $1$ and $-1$ modulo the odd part $\opart{n}$ of $n$.  This type of sufficient condition will typically be satisfied for large enough $n$, which we will exploit in the next section.

\begin{figure}
\begin{pspicture}(-0.5,-0.45)(8.25,5.5)
  \psaxes[ticks = none, labels = none]{->}(0,0)(-0.2,-0.2)(8.5,5.5)[$u$,0][$v$,90]
  \psyTick(0){0}
  \psxTick(0){0}
  \psline(0,4.9)(8.16666,0) 
     \psline(0,2.8)(4.6666,0) 
 \multirput(0,0)(0,.5){11}{\psdot[dotstyle=o,fillcolor=lightgray]}
 \multirput(0.5,0)(0,.5){11}{\psdot[dotstyle=o,fillcolor=lightgray]}
 \multirput(1.5,0)(0,.5){11}{\psdot[dotstyle=o,fillcolor=lightgray]}
 \multirput(1.0,0)(0,.5){11}{\psdot[dotstyle=o,fillcolor=lightgray]}
 \multirput(1.5,0)(0,.5){11}{\psdot[dotstyle=o,fillcolor=lightgray]}
 \multirput(2.0,0)(0,.5){11}{\psdot[dotstyle=o,fillcolor=lightgray]}
 \multirput(2.5,0)(0,.5){11}{\psdot[dotstyle=o,fillcolor=lightgray]}
 \multirput(3.0,0)(0,.5){11}{\psdot[dotstyle=o,fillcolor=lightgray]}
 \multirput(3.5,0)(0,.5){11}{\psdot[dotstyle=o,fillcolor=lightgray]}
 \multirput(4.0,0)(0,.5){11}{\psdot[dotstyle=o,fillcolor=lightgray]}
 \multirput(4.5,0)(0,.5){11}{\psdot[dotstyle=o,fillcolor=lightgray]}
 \multirput(5.0,0)(0,.5){11}{\psdot[dotstyle=o,fillcolor=lightgray]}
 \multirput(5.5,0)(0,.5){11}{\psdot[dotstyle=o,fillcolor=lightgray]}
 \multirput(6.0,0)(0,.5){11}{\psdot[dotstyle=o,fillcolor=lightgray]}
 \multirput(6.5,0)(0,.5){11}{\psdot[dotstyle=o,fillcolor=lightgray]}
 \multirput(7.0,0)(0,.5){11}{\psdot[dotstyle=o,fillcolor=lightgray]}
 \multirput(7.5,0)(0,.5){11}{\psdot[dotstyle=o,fillcolor=lightgray]}
  \multirput(8.0,0)(0,.5){11}{\psdot[dotstyle=o,fillcolor=lightgray]}
 \uput[180](0,5){$n=49$}
 \uput[180](0,3){$n=28$}
 \uput[45](0.4,2.4){$\kron{1}{5}$}
 \psdot[dotstyle=o,fillcolor=white](0.5,2.5)
 \uput[45](1.4,3.9){$\kron{3}{8}$}
  \psdot[dotstyle=o,fillcolor=black](1.5,4.0)
 \uput[45](2.9,0.9){$\kron{6}{2}$}
  \psdot[dotstyle=o,fillcolor=black](3.0,1.0)
 \uput[45](3.9,2.4){$\kron{8}{5}$}
  \psdot[dotstyle=o,fillcolor=black](4.0,2.5)
 \uput[45](6.4,0.9){$\kron{13}{2}$}
 \psdot[dotstyle=o,fillcolor=black](6.5,1.0)
\end{pspicture}
\caption{The upper diagonal line is $n=3u + 5v$ for $n=49$; the lower line is $n = 3u + 5v$ for $n=28$.  Integer points are marked.  A white-filled dot indicates that the point gives rise to a numerator $n$ in the orbit $\Psi\smcol{3}{5}$.  Since this orbit avoids squares, there are no white dots on the line for $n=49$.}
\label{fig:repcond}
\end{figure}
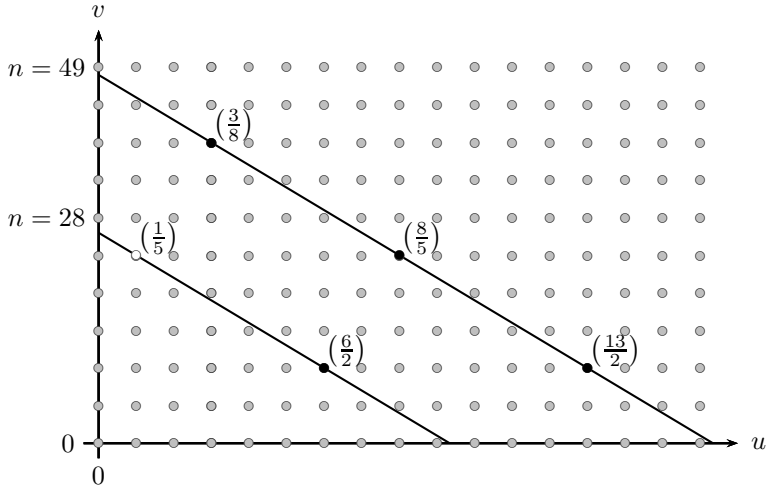

We begin by stating an explicit equivalent condition for an integer to appear.

\begin{lemma}\label{lem:repcond}
    Let $x, y$ be nonnegative coprime integers.	A positive integer $n$ appears as a numerator in the orbit $\Psi\smcol{x}{y}$ if and only if there exist nonnegative integers $u, v$ such that
	\[u\equiv 1\pmod{4}, \quad ux+vy=n,\quad \kron{u}{v}=1.\]
	Similarly, $n$ appears as a denominator if and only if there exist nonnegative integers $u, v$ such that
	\[u\equiv 0\pmod{4}, \quad v\equiv 1\pmod{4},\quad ux+vy=n,\quad \kron{u}{v}=1.\]
\end{lemma}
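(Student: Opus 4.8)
The plan is to unpack the definition of the orbit using the explicit description $\Psi=\left\{\genmtx\in\Gamma_1(4)^{\geq 0}:\kron{a}{b}=1\right\}$ from Theorem~\ref{thm:psicharacterization}, together with the equality of all four row/column Kronecker symbols in $\Gamma_1(4)^{\geq 0}$ supplied by Lemma~\ref{lem:allkronequal}. One should also keep in mind that $\kron{u}{v}=1$ already forces $\gcd(u,v)=1$, which is exactly what makes the ``extend a column to a unimodular matrix'' step below possible.

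For the forward implication, suppose $n$ is a numerator, say $\smcol{n}{\ast}=M\smcol{x}{y}$ with $M=\genmtx\in\Psi$, so $n=ax+by$. I would simply take $(u,v)=(a,b)$: these are nonnegative, $ux+vy=n$, $u=a\equiv 1\pmod 4$ since $M\in\Gamma_1(4)$, and $\kron{u}{v}=\kron{a}{b}=1$ by Theorem~\ref{thm:psicharacterization}. For a denominator $n=cx+dy$ I would take $(u,v)=(c,d)$: now $u=c\equiv 0\pmod 4$ and $v=d\equiv 1\pmod 4$ from $M\in\Gamma_1(4)$, we have $ux+vy=n$, and $\kron{u}{v}=\kron{c}{d}=\kron{a}{b}=1$ by Lemma~\ref{lem:allkronequal} and Theorem~\ref{thm:psicharacterization}. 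So this direction is purely a matter of reading off the columns.

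For the converse, given an admissible pair $(u,v)$, I would construct a matrix $M\in\Psi$ realising $n$. In the numerator case, set $a=u$, $b=v$ and solve $ud-vc=1$ over $\ZZ$ (possible since $\gcd(u,v)=1$); the solution set is $(c,d)=(c_0+tu,\,d_0+tv)$ for $t\in\ZZ$. Because $u\equiv 1\pmod 4$ is a unit mod $4$, I can choose the residue class of $t$ mod $4$ so that $c\equiv 0\pmod 4$, and then reducing $ud-vc=1$ mod $4$ forces $d\equiv 1\pmod 4$ automatically; enlarging $t$ within this residue class makes $c,d\geq 0$. The resulting $M=\genmtx$ lies in $\Gamma_1(4)^{\geq 0}$ with $\kron{a}{b}=\kron{u}{v}=1$, hence $M\in\Psi$, and its numerator on $\smcol{x}{y}$ is $ux+vy=n$. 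In the denominator case, set $c=u$, $d=v$ and solve $av-bu=1$; reducing mod $4$ now shows $a\equiv 1\pmod 4$ automatically (since $u\equiv 0$, $v\equiv 1\pmod 4$), so for large $t$ the solution $(a,b)=(a_0+tu,\,b_0+tv)$ gives a nonnegative matrix in $\Gamma_1(4)^{\geq 0}$, and Lemma~\ref{lem:allkronequal} gives $\kron{a}{b}=\kron{c}{d}=\kron{u}{v}=1$, so again $M\in\Psi$, now with denominator $n$.

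The only genuine care is required in the degenerate cases $v=0$ in the numerator statement and $u=0$ in the denominator statement, where one cannot force both coordinates of the extension to be positive by enlarging $t$. But $\kron{u}{0}=1$ forces $u=1$ (whence $n=x$) and $\kron{0}{v}=1$ forces $v=1$ (whence $n=y$), and in both cases $M=I\in\Psi$ already witnesses $n$. I expect the main — and only mildly annoying — obstacle to be this simultaneous choice of the free parameter $t$ so as to meet a congruence condition and a positivity condition at once, and being careful that the Kronecker-symbol hypothesis is invoked at precisely the point where it is needed to land in $\Psi$ rather than merely in $\Gamma_1(4)^{\geq 0}$.
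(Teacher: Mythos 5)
Your proof is correct and follows essentially the same route as the paper: read off the relevant column for the forward direction, and for the converse complete the column $(u,v)$ to a unimodular matrix (using $\gcd(u,v)=1$ from $\kron{u}{v}=1$), then shift by the free parameter to meet the $\pmod{4}$ and nonnegativity constraints, finishing via Theorem~\ref{thm:psicharacterization} and Lemma~\ref{lem:allkronequal}. Your explicit treatment of the degenerate cases $v=0$ (numerator) and $u=0$ (denominator) is more careful than the paper's but turns out to be unnecessary: if $v=0$ then $\kron{u}{0}=1$ forces $u=1$, and the determinant equation $ud-vc=1$ then pins $d=1\geq 0$ automatically, so the generic argument already goes through; similarly for $u=0$ in the denominator case.
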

\begin{proof}
	If $n$ is a numerator, there is a matrix $\genmtx\in\Psi$ for which $\genmtx\smcol{x}{y}=\smcol{n}{m}$ for some integer $m$. Then $(u, v)=(a, b)$ gives the claimed pair.
	
	For the other direction of the first (numerator) statement, assume such a pair $(u, v)$ exists. As $\kron{u}{v}=1$, they are coprime, so there exists integers $c,d$ for which $du-cv=1$. By replacing $c$ by $c+uk$ and $d$ by $d+vk$ for a large positive integer $k\equiv -c\pmod{4}$, we can assume that $c, d$ are non-negative and $c\equiv 0\pmod{4}$. Since $u \equiv 1 \pmod{4}$, it follows that $d\equiv 1\pmod{4}$, hence $\sm{u}{v}{c}{d}\in\Psi$, showing that $n$ is a numerator of the orbit.
	
	The denominator case follows analogously, noting that there is now a residue condition on $v$ as well.
\end{proof}

Lemma~\ref{lem:repcond} is illustrated in Figure~\ref{fig:repcond}.  As $n$ grows, the number of integer points in the first quadrant on the line $n = ux + vy$ increases:  for $n$ to appear as a numerator or denominator, at least one point must satisfy certain congruence and Kronecker conditions.  As $n$ grows, we expect this to become more common.
The easiest case of Lemma~\ref{lem:repcond} to analyze is if either $x=0$ or $y=0$, which follows immediately.

\begin{corollary}\label{cor:psiorbitwithzero}
    The following are true:
    \begin{itemize}
        \item The set of numerators of $\Psi\smcol{0}{1}$ is the set of all positive integers;
        \item The set of denominators of $\Psi\smcol{0}{1}$ is the set of all positive integers equivalent to $1\pmod{4}$;
        \item The set of numerators of $\Psi\smcol{1}{0}$ is the set of all positive integers equivalent to $1\pmod{4}$;
        \item The set of denominators of $\Psi\smcol{1}{0}$ is the set of all positive integers equivalent to $0\pmod{4}$.
    \end{itemize}
\end{corollary}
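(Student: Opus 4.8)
The plan is to apply Lemma~\ref{lem:repcond} directly, exploiting the fact that when one of $x,y$ is zero, the constraint $ux+vy=n$ pins down one of the two free parameters entirely, leaving only a single quantity to choose subject to a congruence condition and a Kronecker condition that becomes trivial because one of its arguments is forced to be $0$, $1$, or $n$ itself.

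First I would handle $\Psi\smcol{0}{1}$. For numerators, Lemma~\ref{lem:repcond} asks for nonnegative $u,v$ with $u\equiv 1\pmod 4$, $u\cdot 0 + v\cdot 1 = n$, and $\kron{u}{v}=1$; thus $v=n$ is forced and we are free to pick $u$. Take $u=1$: then $u\equiv 1\pmod 4$ and $\kron{1}{n}=1$, so every positive $n$ is a numerator. For denominators we need $u\equiv 0\pmod 4$, $v\equiv 1\pmod 4$, $v=n$, and $\kron{u}{v}=1$; so $n\equiv 1\pmod 4$ is necessary, and conversely if $n\equiv 1\pmod 4$ we may take $u=0$, which is $0\pmod 4$ and satisfies $\kron{0}{n}=1$ since $\gcd(0,n)=1$ only when $n=1$ — so I need to be slightly more careful and instead take $u=4$ (or any positive multiple of $4$ coprime to $n$, which exists), giving $\kron{4}{n}=\kron{2}{n}^2=1$. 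Hence the denominators of $\Psi\smcol{0}{1}$ are exactly the positive integers $\equiv 1\pmod 4$.

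Symmetrically, for $\Psi\smcol{1}{0}$ the equation becomes $ux+vy = u = n$, so $u=n$ is forced and $v$ is free. For numerators we need $u=n\equiv 1\pmod 4$ (necessary), and then $v=0$ gives $\kron{n}{0}=1$ precisely when $n=1$, so again I pick $v$ to be a suitable positive value coprime to $n$, e.g.\ $v=1$, giving $\kron{n}{1}=1$; thus the numerators are exactly the positive integers $\equiv 1\pmod 4$. For denominators we need $u=n\equiv 0\pmod 4$ (necessary), $v\equiv 1\pmod 4$, and $\kron{n}{v}=1$; taking $v=1$ works, so the denominators are exactly the positive multiples of $4$.

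The only mild subtlety — and the place a careless argument would slip — is the behaviour of the Kronecker symbol when one argument is $0$: $\kron{u}{v}=0$ unless $\gcd(u,v)=1$, so one cannot naively set a free parameter to $0$. This is handled uniformly by instead choosing the free parameter to be a small positive integer (namely $1$, or $4$ when a $0\pmod 4$ condition is also imposed) that is automatically coprime to $n$ and has Kronecker symbol $1$ against it; when $v=1$ or $u=1$ this is immediate, and when we need a multiple of $4$ we use that $\kron{4}{n}=1$ for all odd $n$. No growth estimate or appeal to Section~\ref{sec:psi-many} is needed here, so there is no real obstacle beyond this bookkeeping.
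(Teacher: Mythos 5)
Your proof is correct and takes the same route as the paper: the paper simply remarks that the corollary ``follows immediately'' from Lemma~\ref{lem:repcond} when $x=0$ or $y=0$, and your argument is exactly that immediate specialization with the details of the small free parameter (taking $u=1$, $u=4$, or $v=1$ rather than $0$, to avoid $\kron{\cdot}{\cdot}$ vanishing) written out carefully. The one observation worth tightening in your write-up is purely notational: since $n\equiv 1\pmod 4$ forces $n$ odd, $u=4$ is automatically coprime to $n$ and no appeal to ``some multiple of $4$ coprime to $n$'' is needed.
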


Assuming $x,y>0$, we want to use Lemma~\ref{lem:repcond} to produce sufficient conditions. We start with the easier case of the denominator.

\begin{lemma}\label{lem:denomeventual}
	Let $x$ and $y$ be coprime positive integers, and let $n$ be a positive integer satisfying $n\equiv y\pmod{4}$. Furthermore, assume that at least one of the following conditions holds:
     \begin{itemize}
         \item $n\geq 8xy$ and $n=2m^2$ for some integer $m$;
         \item in any consecutive sequence of $\lfloor\frac{n}{8xy}\rfloor$ integers, there is an integer $r_-$ with $\kron{r_-}{\opart{n}}=-1$ and an integer $r_{+}$ with $\kron{r_{+}}{\opart{n}}=1$.
     \end{itemize}
    Then $n$ is a denominator in the orbit $\Psi\smcol{x}{y}$.
\end{lemma}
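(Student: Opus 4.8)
The plan is to apply Lemma~\ref{lem:repcond}, which reduces the claim to exhibiting nonnegative integers $u,v$ with $u\equiv 0\pmod 4$, $v\equiv 1\pmod 4$, $ux+vy=n$, and $\kron{u}{v}=1$. Since $\gcd(x,y)=1$, the integer solutions of $ux+vy=n$ form a line $(u_0+ty,\,v_0-tx)$, $t\in\ZZ$. I would first verify that the congruence $n\equiv y\pmod 4$ is exactly what permits the constraints $u\equiv 0$ and $v\equiv 1\pmod 4$ to be imposed simultaneously, and that doing so restricts $t$ to a single residue class modulo $4$; writing the surviving solutions as $(u_j,v_j)=(U_0+4yj,\,V_0-4xj)$ with $0\le U_0<4y$, the constraints $u_j,v_j\ge 0$ then hold for $j=0,1,\dots,J-1$ with $J>\frac{n}{4xy}-1$. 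In particular, when $n\ge 8xy$ (forced in either bullet, since $\lfloor\frac{n}{8xy}\rfloor\ge 1$ is needed for the second to be satisfiable), this gives $J\ge 2\lfloor\frac{n}{8xy}\rfloor$: roughly twice as many candidate pairs as the window length in the hypothesis. If $n<8xy$, neither bullet can hold and there is nothing to prove.

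The next step is to compute $\kron{u_j}{v_j}$ as explicitly as possible. Because $v_j$ is odd and $u_jx\equiv n\pmod{v_j}$, periodicity of the Jacobi symbol gives $\kron{u_j}{v_j}\kron{x}{v_j}=\kron{n}{v_j}$. Writing $n=2^f q$ with $q=\opart{n}$ and $x=2^g x'$ with $x'$ odd, applying quadratic reciprocity (legal since $v_j\equiv 1\pmod 4$), and using that $v_j\equiv V_0\pmod{x'}$ is independent of $j$, this collapses to
\[
\kron{u_j}{v_j}=\kron{2}{v_j}^{f+g}\,\kron{V_0}{x'}\,\kron{v_j}{q}.
\]
Assuming for the moment that $\gcd(x,q)=1$, the residue $v_j\bmod q$ runs through an arithmetic progression of invertible common difference as $j$ varies, so $\kron{v_j}{q}=\kron{c-j}{q}$ up to a fixed sign, where the integers $c-j$ for $j=0,\dots,J-1$ are $J$ consecutive integers. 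The point to check is that any $j$ with $\kron{c-j}{q}=\pm 1$ automatically has $\gcd(v_j,q)=1$, which in this nondegenerate case forces $\gcd(u_j,v_j)=\gcd(x,v_j)=1$ (a shared prime would divide $n$, hence $q$, hence $v_j$), so the displayed identity is valid at every such $j$. Finally, the leading factor $\kron{2}{v_j}^{f+g}$ is constant in $j$ except when $x$ is odd and $f=v_2(n)$ is odd, in which case it equals $(-1)^j$ times a constant.

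The two bullets then correspond to two cases. If $q=\opart{n}$ is not a perfect square, then under the second bullet the windowed hypothesis guarantees that $\kron{\cdot}{q}$ takes both values on the $\lfloor\frac{n}{8xy}\rfloor$ consecutive integers $c-j$; combined with a short parity argument (this is where the factor-of-two slack in $J$ is used) to absorb the possible $(-1)^j$, this produces an admissible $j$ with $\kron{u_j}{v_j}=1$. If $q$ is a perfect square---in particular whenever $n=2m^2$, in which case $n$ is even, so $y$ is even, hence $x$ is odd and $f$ is odd---then $\kron{v_j}{q}\in\{0,1\}$, so $\kron{u_j}{v_j}$ equals $(-1)^j$ times a fixed sign at every admissible $j$ where it is nonzero; here $n\ge 8xy$ supplies admissible $j$ of both parities, and one selects the parity giving $+1$. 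This settles the first bullet.

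The step I expect to demand the most care is the interaction of the quadratic-reciprocity and $2$-adic bookkeeping with the coprimality conditions: the $(-1)^j$ and $\kron{2}{v_j}$ terms must be tracked exactly (the $\mu_i$ of Proposition~\ref{prop:mobiuskron} are the same phenomenon), the parity argument must be made precise, and the degenerate situation $\gcd(x,\opart{n})>1$---in which every solution $(u,v)$ has $v$ divisible by the common part, so $\opart{n}$ no longer meets all residues of the progression $v_j$---requires separate, careful treatment before the analogous argument can be run on the remaining part of $\opart{n}$. None of this is conceptually deep, but it is where the real work lies.
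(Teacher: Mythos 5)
Your proposal takes a genuinely different route. The paper constructs an auxiliary matrix $M = \sm{x}{y}{c}{d}$ with $d$ chosen coprime to all the relevant $u$-values, and applies Proposition~\ref{prop:mobiuskron}; this rewrites $\kron{u}{v}$ as (a sign times) $\kron{cu+dv}{n}$, where $cu+dv$ is a \emph{new} linear form in $k$, distinct from $u$ and $v$. You instead work directly from $u_jx+v_jy=n$, using $\kron{u_j}{v_j}\kron{x}{v_j}=\kron{n}{v_j}$ and reciprocity to express $\kron{u_j}{v_j}$ in terms of $\kron{v_j}{\opart{n}}$. This is more elementary, and when $\gcd(x,\opart{n})=1$ your reduction and the subsequent $2$-adic bookkeeping (the $(-1)^j$ factor, restriction to even $j$ to absorb it, and the two-bullet case split) are correct and run closely parallel to the paper's.

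The degenerate case $\gcd(x,\opart{n})>1$, however, is a genuine gap and not the formality you suggest. If a prime $p$ divides $\gcd(x,\opart{n})$, then $p\mid v_j$ for every $j$, so $\kron{x}{v_j}=\kron{n}{v_j}=0$ and your key identity collapses to the vacuous $0=0$, while $\kron{u_j}{v_j}$ itself can perfectly well be $\pm 1$ and is left entirely unconstrained by your formula. The lemma's hypothesis controls $\kron{\cdot}{\opart{n}}$ on short intervals, and that control does not transfer to $\kron{\cdot}{q_2}$ for the part $q_2$ of $\opart{n}$ coprime to $x$, so ``running the analogous argument on the remaining part'' would require either a reformulated hypothesis or a new idea. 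The paper evades the issue entirely because $\gcd(cu+dv,n)=\gcd(u,v)$, which varies with $k$ and is not forced to share a factor with $\opart{n}$; that escape comes from the freedom to replace $M$ by $\sm{x}{y}{c_0+sx}{d_0+sy}$ and choose $s$ so that $d$ is coprime to every relevant $u$, and your direct approach has no analogous degree of freedom. Until the degenerate case is actually handled, the proof is incomplete.
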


\begin{proof}
The overall plan of the proof is to range through the solutions $(u(k),v(k))$ to the linear equation of Lemma~\ref{lem:repcond} by parametrizing in terms of a variable $k$, and then use Proposition~\ref{prop:mobiuskron} to evaluate whether the Kronecker symbol is $1$.  For this purpose, we define a matrix $\sm{x}{y}{c}{d}$ to apply to $\smcol{u(k)}{v(k)}$, so that the Proposition describes the Kronecker symbol $\kron{u(k)}{v(k)}$ in terms of a symbol with fixed denominator, and numerator ranging through an arithmetic progression controlled by $k$. 
 The trick is that $u(k)x+v(k)y=n$, a constant in terms of $k$.

\textbf{Setting up the list of solutions to $n=ux+vy$ with $(u,v) \equiv (0,1) \pmod{4}$.}
As $n\equiv y\pmod{4}$, there exists integers $(u, v)$ satisfying $ux+vy=n$ with $u \equiv 0 \pmod 4$ and $v\equiv 1\pmod{4}$. Pick the solution with smallest nonnegative value of $u$, say $(u_0, v_0)$. Then $(u, v) = (u_0+4yk, v_0-4xk)$ for $k\in\ZZ$ gives the general solution to the system of equations $ux+vy=n$, $u \equiv 0 \pmod 4$, $v\equiv 1\pmod{4}$.

In particular,
\begin{itemize}
	\item $0\leq u_0 < 4y$;
	\item $(u_0,v_0)$ is a nonnegative pair if and only if $0\leq k \leq \frac{n}{4xy}-\frac{u_0}{4y}$.
\end{itemize}
Lemma~\ref{lem:repcond} implies that $n$ is a denominator of the orbit $\Psi\smcol{x}{y}$ if and only if there exists an integer $k$ with
\begin{equation}
\label{eqn:krange}
0\leq k \leq \frac{n}{4xy}-\frac{u_0}{4y}\quad\text{and}\quad\kron{u}{v}=\kron{u_0+4yk}{v_0-4xk}=1.
\end{equation}

\textbf{Finding a suitable matrix for Proposition~\ref{prop:mobiuskron}.}
Next, fix a pair of integers $(c_0, d_0)$ such that 
\begin{equation}
    \label{eqn:xd0yc0den}
    xd_0-yc_0=1. 
\end{equation}
For any integer $s$, we have $M=\sm{x}{y}{c_0+sx}{d_0+sy}\in\SL(2, \ZZ)$. We wish to choose $s$ such that:
\begin{itemize}
	\item $c=c_0+sx$ and $d=d_0+sy$ are nonnegative;
	\item $d$ is coprime to all numbers of the form $u_0+4yk$ where $0\leq k \leq \frac{n}{4xy}-\frac{u_0}{4y}$;
    \item if $4\nmid y$, then $d\equiv \opart{n}\pmod{4}$
\end{itemize}

We justify why this is possible. As $x,y>0$, the first condition is satisfied for any sufficiently large $s$. The second can be satisfied since $\gcd(d_0, y)=1$, making $d_0+sy$ an arithmetic progression with coprime residue and modulus. If $y$ is odd, then the third point can be satisfied simultaneously. 
If $2\mid\mid y$, then $d_0$ is necessarily odd from \eqref{eqn:xd0yc0den}, and the third condition holds for either all $s$ odd or all $s$ even.

\textbf{Applying Proposition~\ref{prop:mobiuskron}.}
Thus suppose we have chosen $s$, and hence $c$ and $d$, as desired.  Apply Proposition \ref{prop:mobiuskron} to the matrix $\sm{x}{y}{c}{d}$ and pair $(u, v)=(u_0+4yk,v_0-4xk)$ (note that the second chosen condition guarantees the hypotheses of Proposition~\ref{prop:mobiuskron} are satisfied), to obtain
\[
\kron{n}{cu_0+dv_0-4k}=(-1)^{\alpha}\mu_1\mu_2\kron{c}{d}\kron{u}{v}.
\]
Applying quadratic reciprocity and rearranging yields
\begin{equation}\label{eqn:kronjobgeneralden}
\kron{u}{v} = (-1)^{AB+AC+BC+AD+CE}\mu_1\mu_2\kron{c}{d}\kron{cu_0+dv_0-4k}{n},
\end{equation}
where
\[
A=\frac{\opart{u}-1}{2},\quad B=\frac{\opart{d}-1}{2},\quad C=\frac{\opart{(cu_0+dv_0-4k)}-1}{2},\quad D=\frac{\opart{v}-1}{2}, \quad E=\frac{\opart{n}-1}{2},
\]
and 
\[
\mu_1 = \begin{cases} 
	\kron{cduv+1}{2} & \text{if $2\mid\mid d$ or $2\mid\mid u$;}\\
	1 & \text{else;}
\end{cases}
,\quad
\mu_2 = \begin{cases}
	\kron{yu(cu+dv)+1}{2} & \text{if $2\mid\mid cu+dv$;}\\
	1 & \text{else.}
\end{cases}
\]

Analyzing the various imposed conditions allows us to greatly simplify \eqref{eqn:kronjobgeneralden}. First, $cu_0+dv_0-4k\equiv d\pmod{4}$, which is necessarily odd. Thus $B=C$.  Since $D\equiv 0\pmod{2}$, we obtain
\[AB+AC+BC+AD+CE\equiv B(B+E)\pmod{2}.\]
This is $0$ if and only if either $\opart{d}\equiv 1\pmod{4}$ or $\opart{d}\equiv\opart{n}\pmod{4}$. 
Also, $B=E$ if in addition $4\nmid y$ (using the choice of $c,d$).
Therefore the power of $-1$ in  \eqref{eqn:kronjobgeneralden} is $-1$ if and only if $4\mid y$, $x\equiv 3\pmod{4}$, and $\opart{n}\equiv 1\pmod{4}$ (using $xd\equiv 1\pmod{4}$ since $4\mid y$).

As $cu+dv\equiv d\equiv 1\pmod{2}$ ($d$ was chosen to be odd) and $4\mid u$, $\mu_1=\mu_2=1$. Putting this all together gives
\begin{equation}\label{eqn:kronjobbetterden}
\kron{u}{v} = \pm\kron{c}{d}\kron{cu_0+dv_0-4k}{n},
\end{equation}
where the $\pm$ is $-$ if and only if $4\mid y$, $x\equiv 3\pmod{4}$, and $\opart{n}\equiv 1\pmod{4}$.

\textbf{Finding the needed Kronecker symbol.}  The equation \eqref{eqn:kronjobbetterden} is a formula for $\kron{u}{v}$ in terms of $k$.  We now proceed to demonstrate that it takes a value $+1$ for some $k$ in the range \eqref{eqn:krange}.

\textbf{Case A:}  Assume the first condition of the lemma holds.  Then $n=2m^2=xu+yv$. Thus
\begin{equation}
    \label{eqn:ref2lemma}
\kron{u}{v} = \pm\kron{c}{d}\kron{cu_0+dv_0-4k}{n}=\pm\kron{c}{d}\kron{cu_0+dv_0-4k}{2}.
\end{equation}
The values of $k=0$ and $k=1$ give opposite values for this expression, so we are done as long as both are in our range, i.e. $1\leq\frac{n}{4xy}-\frac{u_0}{4y}$. This is implied by the condition of $n\geq 8xy$.

\textbf{Case B:}  Assume the second condition of the lemma holds.  Let $k=2k'$ be even, and consider
\[
\kron{u}{v}=\pm\kron{cu_0+dv_0-8k'}{2^{v_2(n)}}\kron{cu_0+dv_0-8k'}{\opart{n}}=\pm\kron{(cu_0+dv_0)/8-k'}{\opart{n}},
\]
where values independent of $k'$ (such as $\kron{c}{d}$) get absorbed into the $\pm$, and $(cu_0+dv_0)/8$ is regarded as any integer equivalent to this modulo $\opart{n}$. The possible values of $k'$ are
\[0\leq k'\leq \frac{n}{8xy}-\frac{u_0}{8y},\]
giving a sequence of at least $\lfloor\frac{n}{8xy}\rfloor$ consecutive integers. By the assumption, the resulting Kronecker symbol can be both $+1$ and $-1$ in this range, proving that $\kron{u}{v}=1$ has a solution.
\end{proof}

The above lemma does not apply when $n$ is a square. For this, we follow a similar proof, but the final analysis is slightly different.

\begin{lemma}\label{lem:denomsquareseventual}
	Let $x$ and $y$ be coprime positive integers, and let $n\geq 4xy$ be an integral square satisfying $n\equiv y\pmod{4}$. Furthermore, assume that one of the following is true:
     \begin{itemize}
         \item $y\equiv 1\pmod{4}$ and $\kron{x}{y}=1$;
         \item $(x, y)\equiv (1, 0)\pmod{4}$ and $\kron{x}{y}=1$;
         \item $(x, y)\equiv (3, 0)\pmod{4}$ and $\kron{x}{y}=\kron{-1}{y}$;
     \end{itemize}
    Then $n$ is a denominator in the orbit $\Psi\smcol{x}{y}$.
\end{lemma}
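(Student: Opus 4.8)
The plan is to follow the proof of Lemma~\ref{lem:denomeventual} up to its key identity \eqref{eqn:kronjobbetterden}, and then exploit the fact that when $n$ is a square the Kronecker symbol $\kron{\cdot}{n}$ takes only the values $0$ and $1$: this removes the oscillation in $k$, so that the stated conditions on $\kron{x}{y}$ become exactly the input needed to finish. By Lemma~\ref{lem:repcond} it suffices to produce nonnegative integers $u,v$ with $u\equiv 0\pmod 4$, $v\equiv 1\pmod 4$, $ux+vy=n$, and $\kron{u}{v}=1$.

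First I would set up the solution family and the auxiliary matrix exactly as in the proof of Lemma~\ref{lem:denomeventual}: take the solution $(u_0,v_0)$ with smallest nonnegative $u_0\in[0,4y)$, so that the general nonnegative solution is $(u,v)=(u_0+4yk,v_0-4xk)$ for $0\le k\le \frac{n}{4xy}-\frac{u_0}{4y}$ (a range which is nonempty, using $n\ge 4xy$); and choose $\sm{x}{y}{c}{d}\in\SL(2,\ZZ)$ with $c,d\ge 0$, with $d$ coprime to every $u_0+4yk$ in this range, and with $d\equiv\opart{n}\pmod 4$ when $4\nmid y$. The same computation as before then gives \eqref{eqn:kronjobbetterden},
\[
\kron{u}{v}=\pm\,\kron{c}{d}\,\kron{cu_0+dv_0-4k}{n},
\]
where the sign is $-$ exactly when $4\mid y$, $x\equiv 3\pmod 4$, and $\opart{n}\equiv 1\pmod 4$.

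Next I would use that $n$ is a square: then its $2$-part is an even power of $2$ and its odd part is an odd square, so $\kron{m}{n}=1$ if $\gcd(m,n)=1$ and $0$ otherwise. Since $\sm{x}{y}{c}{d}\smcol{u}{v}=\smcol{n}{cu_0+dv_0-4k}$ and this matrix has determinant $1$, we also have $\gcd(cu_0+dv_0-4k,n)=\gcd(u,v)$. Hence $\kron{u}{v}$ no longer depends on $k$ except through the coprimality of $(u,v)$: it equals $\pm\kron{c}{d}$ when $\gcd(u,v)=1$ and vanishes otherwise. The crux is to show this constant is $+1$. From $xd-yc=1$ one gets $\kron{d}{y}=\kron{x}{y}$ (as $y\equiv 0$ or $1\pmod 4$, so $\kron{\cdot}{y}$ is periodic mod $y$) and $\kron{c}{d}=\kron{-1}{d}\kron{y}{d}$ (as $\gcd(d,y)=1$); feeding these into quadratic reciprocity, together with the forced residue of $d$ modulo $4$ (namely $d\equiv 1\pmod 4$ unless $x\equiv 3\pmod 4$, in which case $d\equiv 3\pmod 4$), a short case analysis on $(x,y)\bmod 4$ yields $\pm\kron{c}{d}=\kron{x}{y}$ in the first two cases and $\pm\kron{c}{d}=\kron{-1}{y}\kron{x}{y}$ in the third. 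In each case the hypothesis on $\kron{x}{y}$ makes this exactly $1$, so $\kron{u}{v}=\kron{cu_0+dv_0-4k}{n}$, which is $1$ precisely when $\gcd(u,v)=1$.

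It then remains to exhibit a $k$ in the range with $\gcd(u,v)=1$; Lemma~\ref{lem:repcond} gives that $n$ is a denominator of $\Psi\smcol{x}{y}$. Since $\gcd(u,v)\mid n$ and $(u,v)=(u_0+4yk,v_0-4xk)$, each odd prime divisor of $n$ forbids at most one residue class of $k$, so this is an elementary sieve, and the room to run it is the length $\ge\lfloor n/4xy\rfloor$ of the range, which is where the hypothesis $n\ge 4xy$ is spent. I expect the sign computation of the previous paragraph to be the conceptual heart: it is what turns $\kron{u}{v}$ into a fixed sign rather than an oscillating one, and the three Kronecker conditions are calibrated so that this fixed sign is $+1$ — mirroring, with the opposite choice of sign, the reciprocity obstruction of Theorem~\ref{thm:semigroupreciprocity}. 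The part requiring the most care is making the coprimality step go through with the stated constant $4xy$.
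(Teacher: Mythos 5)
Your proposal follows essentially the same route as the paper: set up the arithmetic progression of solutions $(u,v)=(u_0+4yk, v_0-4xk)$, build an auxiliary matrix $\sm{x}{y}{c}{d}$, push through Proposition~\ref{prop:mobiuskron} to arrive at \eqref{eqn:kronjobbetterden}, use that a square $n$ reduces the oscillating factor $\kron{cu_0+dv_0-4k}{n}$ to $0$ or $1$, and evaluate the remaining constant $\pm\kron{c}{d}$. Your sign computation, run directly from $xd-yc=1$ and quadratic reciprocity, is equivalent to the paper's (which invokes Lemma~\ref{lem:allkronequal} applied to $\sm{x}{c}{y}{d}$ for the first two bullets and a short explicit chain for the third); both land on the same conclusion.

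Where you depart from the paper is in explicitly flagging the coprimality requirement, via the correct observation that $\gcd(cu_0+dv_0-4k,n)=\gcd(u,v)$: one still needs a $k$ in the range with $\gcd(u,v)=1$, else the Kronecker factor is $0$, not $1$. The paper's writeup elides this point — after showing $\pm\kron{c}{d}=1$ it simply asserts $\kron{u}{v}=1$ and declares a solution exists whenever the $k$-range is nonempty. Your instinct that the sieve is the delicate spot is well founded, and in fact the threshold $n\geq 4xy$ in the lemma is not enough to make it go through. The valid range of $k$ can have length exactly $1$, and then a single forbidden residue class exhausts it. Concretely, take $x=9$, $y=4$, $n=144=4xy$: all hypotheses of the second bullet hold ($(x,y)\equiv(1,0)\pmod 4$, $\kron{9}{4}=1$, $n\equiv y\pmod 4$, $n$ a square), yet the only nonnegative pair with $u\equiv 0$, $v\equiv 1\pmod 4$ and $9u+4v=144$ is $(u,v)=(12,9)$, which is not coprime, so $\kron{12}{9}=0$ and $144$ is \emph{not} a denominator of $\Psi\smcol{9}{4}$. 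This boundary case does not affect the paper's Theorem~\ref{thm:TableForPsi}, which only invokes the present lemma for squares $n\geq 8xy$; still, the lemma's constant should be raised (or a genuine sieve supplied, using that the bad $k$ lie in at most one residue class per odd prime divisor of $n$ and that the range length $\lfloor n/4xy\rfloor$ eventually dominates the Jacobsthal bound $2^{\omega(\opart{n})}$). Your hedging on this point is not a weakness of the proposal; it correctly localizes a gap the paper's own proof shares.
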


\begin{proof}
The proof of Lemma \ref{lem:denomeventual} up to equation \eqref{eqn:kronjobbetterden} is still valid. To summarize, we have a solution $u_0x+v_0y=n$ with $u_0\equiv 0\pmod{4}$ and $v_0\equiv 1\pmod{4}$, where $u_0\geq 0$ is minimized. We are looking for pairs $(u, v)=(u_0+4yk, v_0-4xk)$ with $0\leq k\leq\frac{n}{4xy}-\frac{u_0}{4y}$ and $\kron{u}{v}=1$.

Nonnegative integers $c, d$ were chosen such that $xd-yc=1$, $d$ is coprime to all possible $u$ values, and if $y$ is odd, then $d\equiv \opart{n}\equiv 1\pmod{4}$ (since $\opart{n}$ is still a square). As $n$ is a square,  \eqref{eqn:kronjobbetterden} implies that
\[\kron{u}{v}=\pm\kron{c}{d},\]
where the $\pm$ is $+$ except for $(x, y)\equiv (3, 0)\pmod{4}$.

\textbf{Case 1:}  Suppose $y$ is odd or $(x, y)\equiv (1, 0)\pmod{4}$, and suppose $\kron{x}{y}=1$.  Then $d\equiv 1\pmod{4}$, and Lemma \ref{lem:allkronequal} applies to $\sm{x}{c}{y}{d}$, showing that $\kron{c}{d}=\kron{x}{y}$. Therefore $\kron{u}{v}=1$, so there is a solution as long as $\frac{n}{4xy}-\frac{u_0}{4y}\geq 0$, which follows from $n\geq 4xy$.

\textbf{Case 2:}  Suppose that $(x, y)\equiv (3, 0)\pmod{4}$ and $\kron{x}{y}=\kron{-1}{y}$.  Then $\kron{u}{v}=-\kron{c}{d}$. We have $d\equiv 3\pmod{4}$, and as in the proof of Lemma \ref{lem:allkronequal}, 
\[
\kron{c}{d}=\kron{y}{d}\kron{cy}{d}=(-1)^{(\opart{y}-1)/2}\kron{d}{y}\kron{dx-1}{d}=-\kron{-1}{y}\kron{x}{y}\kron{xd}{y}=-\kron{cy+1}{y}=-1.
\]
Thus $\kron{u}{v}=1$, and we finish as in the first case.
\end{proof}

The extra difficulty for the numerator case is that $v\pmod{4}$ is not necessarily fixed; otherwise the proof follows a similar path.

\begin{lemma}\label{lem:numeventual}
	Let $x$ and $y$ be coprime positive integers, and let $n$ be a positive integer satisfying $n\equiv x\pmod{\gcd(y, 4)}$. Furthermore, assume that at least one of the following conditions holds:
     \begin{itemize}
         \item $n\geq 8xy$ and $n=2m^2$ for some integer $m$;
         \item in any consecutive sequence of $\lfloor\frac{n}{8xy}\rfloor$ integers, there is an integer $r_-$ with $\kron{r_-}{\opart{n}}=-1$ and an integer $r_+$ with $\kron{r_+}{\opart{n}}=1$.
     \end{itemize}
    Then $n$ is a numerator in the orbit $\Psi\smcol{x}{y}$.
\end{lemma}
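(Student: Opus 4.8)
The plan is to imitate the proof of Lemma~\ref{lem:denomeventual} closely, making the adjustments forced by the fact that it is now the numerator, not the denominator, that must lie in a fixed residue class modulo $4$. By Lemma~\ref{lem:repcond}, $n$ is a numerator of $\Psi\smcol{x}{y}$ exactly when there are nonnegative integers $u,v$ with
\[u\equiv1\pmod4,\qquad ux+vy=n,\qquad\kron uv=1.\]
Set $g=\gcd(y,4)$. The solutions of $ux+vy=n$ with $u\equiv1\pmod4$ are $(u,v)=\bigl(u_0+\tfrac{4y}{g}k,\;v_0-\tfrac{4x}{g}k\bigr)$ for $k\in\ZZ$, where $(u_0,v_0)$ is the one with $0\le u_0<\tfrac{4y}{g}$, and the hypothesis $n\equiv x\pmod{\gcd(y,4)}$ is precisely what makes this family nonempty. (The step is $\tfrac{4y}{g},\tfrac{4x}{g}$ rather than $4y,4x$ as in Lemma~\ref{lem:denomeventual}, since only $u$ now carries a congruence condition; this only helps.) The pair $(u,v)$ is nonnegative for $0\le k\le\tfrac{gn}{4xy}-\tfrac{gu_0}{4y}$, a range which still contains at least $\lfloor n/(8xy)\rfloor$ consecutive integers after any restriction of $k$ to one residue class modulo $2$.

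Next I would choose nonnegative $c=c_0+sx$, $d=d_0+sy$ with $xd-yc=1$, taking $s$ large enough that $c,d\ge0$ and that $d$ is coprime to every $u=u_0+\tfrac{4y}{g}k$ in range --- this is possible because those $u$ are all odd and $\gcd(d_0,y)=1$, so only finitely many odd-prime conditions are placed on $s$. The key extra requirement, which is the substitute for the input ``$v\equiv1\pmod4$'' of Lemma~\ref{lem:denomeventual}, is that $d\equiv\opart{n}\pmod4$. This is always attainable: if $y$ is odd, $d_0+sy$ runs over all residues mod $4$; if $2\mid\mid y$, it runs over both odd residues; and if $4\mid y$, then $x$ and $n$ are odd with $n\equiv x\pmod4$, so $xd\equiv1\pmod4$ already forces $d\equiv x^{-1}\equiv n\pmod4$. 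When $4\mid y$ there is additional freedom in $s\bmod2$, which I would use to make $cu+dv$ odd; in the case $2\mid\mid y$ this is not possible, and $cu+dv$ turns out to be forced even, which is the source of the extra bookkeeping below. All these conditions on $s$ are simultaneously satisfiable by CRT.

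Now apply Proposition~\ref{prop:mobiuskron} to $\sm{x}{y}{c}{d}$ and the vector $(u,v)$, then flip by quadratic reciprocity. The decisive simplification --- playing the role of the evenness of $D$ in Lemma~\ref{lem:denomeventual} --- is that $u\equiv1\pmod4$ makes $A=\tfrac{u-1}{2}$ even, so $\alpha\equiv BC\pmod2$; combined with the reciprocity contribution $(-1)^{CE}$ and the choice $d\equiv\opart{n}\pmod4$, which gives $B\equiv E\pmod2$, the total power of $-1$ is trivial. Also $\mu_1=1$ ($u,d$ odd). Restricting $k$ to a suitable residue class mod $2$ stabilises $\mu_2$, the factor $\kron{cu+dv}{2^{v_2(n)}}$, and the $2$-adic valuation of $cu+dv$ (here one uses that $n$ is odd whenever $g>1$, and that $cu+dv\bmod8$ is constant over the chosen class when $g=1$); for $2\mid\mid y$ one moreover extracts the now-constant power of $2$ dividing $cu+dv$. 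After absorbing all constants into a sign, one is left with
\[\kron uv=\pm\kron{k'-m_0}{\opart{n}}\]
for a fixed residue $m_0\pmod{\opart{n}}$ and a sign independent of $k'$, with $k'$ running over at least $\lfloor n/(8xy)\rfloor$ consecutive integers --- exactly the situation at the end of the proof of Lemma~\ref{lem:denomeventual}. The endgame is then verbatim: under the first hypothesis (where $y$ is necessarily odd and $cu+dv$ is odd), the argument reduces, as in Case~A of that lemma, to $k=0$ and $k=1$ giving opposite values of $\kron uv$ with both in range once $n\ge8xy$; under the second hypothesis the assumed presence of both Kronecker values among any $\lfloor n/(8xy)\rfloor$ consecutive integers supplies a $k'$ with $\kron uv=1$; and either way Lemma~\ref{lem:repcond} shows $n$ is a numerator. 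The one genuinely delicate step --- and the sole real departure from Lemma~\ref{lem:denomeventual} --- is the stabilisation of $\mu_2$ and the $2$-adic factors across the three cases $y$ odd, $2\mid\mid y$, $4\mid y$, now that $v\pmod4$ (and even the parity of $v$) is no longer fixed; that $A$ is even is what keeps it under control.
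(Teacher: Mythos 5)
Your proposal is correct and follows the paper's proof of this lemma essentially step for step: the same parametrization of the solution family (your step $4y/g$ is the paper's $hy$ with $h = 4/\gcd(y,4)$), the same choice of $(c,d)$ with $d\equiv\opart{n}\pmod 4$, the same observation that $A$ even together with $B\equiv E$ kills the sign in Proposition~\ref{prop:mobiuskron}, the same $k$-restriction to stabilize $\mu_2$, and the same endgame split between $n=2m^2$ and the Kronecker-block hypothesis. One small imprecision: in the $4\mid y$ subcase you cannot choose $s\bmod 2$ to make $cu+dv$ odd for \emph{all} $k$ in the range, since $cu+dv=cu_0+dv_0-k$ flips parity with $k$; you would need to first restrict $k$ to a residue class mod $2$ (which you do mention, and which still leaves $\geq\lfloor n/(8xy)\rfloor$ consecutive values), or, more cleanly as the paper does, simply observe that $4\mid y$ and $u$ odd force $8\mid yu(cu+dv)$ whenever $2\mid\mid cu+dv$, so $\mu_2=1$ automatically without any extra restriction.
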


\begin{proof}The proof follows the same plan as for Lemma~\ref{lem:denomeventual}, so we will highlight the differences.

\textbf{Setting up the list of solutions to $n=ux+vy$ with $u \equiv 1 \pmod{4}$.}
As $n\equiv x\pmod{\gcd(y, 4)}$, there exists integers $(u, v)$ satisfying $ux+vy=n$ with $u \equiv 1 \pmod 4$. Pick the solution with smallest nonnegative value of $u$, say $(u_0, v_0)$, and let $h\in\{1, 2, 4\}$ be the smallest positive integer such that $4\mid hy$. Then $(u, v) = (u_0+hyk, v_0-hxk)$ for $k\in\ZZ$ gives the general solution to the system of equations $ux+vy=n$ and $u \equiv 1 \pmod 4$.

In particular,
\begin{itemize}
	\item $0\leq u_0 < hy$;
	\item this solution is a nonnegative pair if and only if $0\leq k \leq \frac{n}{hxy}-\frac{u_0}{hy}$.
\end{itemize}
Lemma~\ref{lem:repcond} implies that $n$ is a numerator of the orbit $\Psi\smcol{x}{y}$ if and only if there exists an integer $k$ with
\[0\leq k \leq \frac{n}{hxy}-\frac{u_0}{hy}\quad\text{and}\quad\kron{u}{v}=\kron{u_0+hyk}{v_0-hxk}=1.\]

\textbf{Finding a suitable matrix for Proposition~\ref{prop:mobiuskron}.}
As before, fix a pair of integers $(c_0, d_0)$ such that 
\begin{equation*}
    xd_0-yc_0=1. 
\end{equation*}
For any integer $s$, we have $M=\sm{x}{y}{c_0+sx}{d_0+sy}\in\SL(2, \ZZ)$. We wish to choose $s$ such that:
\begin{itemize}
	\item $c=c_0+sx$ and $d=d_0+sy$ are nonnegative;
	\item $d$ is coprime to all numbers of the form $u_0+hyk$ where $0\leq k \leq \frac{n}{hxy}-\frac{u_0}{hy}$;
    \item $d\equiv \opart{n}\pmod{4}$ (in particular, $d$ is odd).
\end{itemize}
The justification for the existence of $s$ is exactly as in Lemma~\ref{lem:denomeventual}, except that we must also demonstrate the last bullet when $4\mid y$.  In this case, the assumptions on $x$ imply that $x$ is odd and $x\equiv n\pmod{4}$. 
 Furthermore, $xd_0\equiv xd_0-yc_0\equiv 1\pmod{4}$, whence $\opart{n}\equiv n\equiv x \equiv d_0\equiv d\pmod{4}$.

\textbf{Applying Proposition~\ref{prop:mobiuskron}.}
As in Lemma~\ref{lem:denomeventual}, suppose we have chosen $s$, and hence $c$ and $d$, as desired, and apply Proposition \ref{prop:mobiuskron} to the matrix $\sm{x}{y}{c}{d}$ and pair $(u, v)=(u_0+hyk,v_0-hxk)$, to obtain
\begin{equation}\label{eqn:kronjobgeneral}
\kron{u}{v} = (-1)^{AB+AC+BC+AD+CE}\mu_1\mu_2\kron{c}{d}\kron{cu_0+dv_0-hk}{n},
\end{equation}
where
\[
A=\frac{\opart{u}-1}{2},\quad B=\frac{\opart{d}-1}{2},\quad C=\frac{\opart{(cu_0+dv_0-hk)}-1}{2},\quad D=\frac{\opart{v}-1}{2}, \quad E=\frac{\opart{n}-1}{2},
\]
and 
\[
\mu_1 = \begin{cases} 
	\kron{cduv+1}{2} & \text{if $2\mid\mid d$ or $2\mid\mid u$;}\\
	1 & \text{else;}
\end{cases}
,\quad
\mu_2 = \begin{cases}
	\kron{yu(cu+dv)+1}{2} & \text{if $2\mid\mid cu+dv$;}\\
	1 & \text{else.}
\end{cases}
\]
As $u=u_0+hyk\equiv 1\pmod{4}$, we have that $A$ is even, so $AB+AC+BC+AD+CE\equiv C(B+E)\pmod{2}$. Since $\opart{d}\equiv d\equiv\opart{n}\pmod{4}$ by assumption, we have $B\equiv E\pmod{2}$, so $C(B+E)\equiv 0\pmod{2}$. In particular, the power of $-1$ in \eqref{eqn:kronjobgeneral} is always $1$.

Next, $d$ is odd and $u\equiv 1\pmod{4}$, hence $\mu_1=1$. This gives the much more palatable
\begin{equation}\label{eqn:kronjobbetter}
\kron{u}{v} = \mu_2\kron{c}{d}\kron{cu_0+dv_0-hk}{n}.
\end{equation}

\textbf{Finding the needed Kronecker symbol.}  We break into cases.

\textbf{Case A:}  Assume the first condition of the lemma holds.  Then $2m^2=n = xu+yv$. Since $\gcd(cu+dv,2m^2) = \gcd(cu+dv,xu+yv)=\gcd(u, v)$ is odd (since $u$ is odd), we conclude $cu+dv$ is odd, and therefore $\mu_2=1$. Furthermore, $y$ is necessarily odd, so $h=4$, and \eqref{eqn:ref2lemma} from Lemma~\ref{lem:denomeventual} holds, where the sign is positive.
As in Lemma~\ref{lem:denomeventual}, we are done if $n \geq 8xy$.

\textbf{Case B:} We will assume instead that the second condition of the lemma holds.  We  will divide into cases based on $v_2(y)$.

\textbf{Case B1:} $4\mid y$.  Hence $h=1$ and $n$ is odd (from $n\equiv x\pmod{4}$ and $x$ being odd). If $2\mid\mid cu+dv$, then $8\mid yu(cu+dv)$, hence $\mu_2=1$ always. Thus
\[
\kron{u}{v} = \kron{c}{d}\kron{cu_0+dv_0-k}{n}.
\]
As $u_0 < y$, we have a block of at least $\lfloor\frac{n}{xy}\rfloor$ consecutive integers that we are evaluating in $\kron{\cdot}{n}$, and we will be done if they take the values of $-1$ and $1$ (since one of those will equal the fixed quantity $\kron{c}{d}$). This is implied by our assumption.

\textbf{Case B2:} $2\mid\mid y$.  Hence $h=2$ and $n$ is odd. Then $cu+dv=cu_0+dv_0-2k$ is always even or always odd. If it is always odd, then $\mu_2=1$, and we obtain
\[
\kron{u}{v} = \kron{c}{d}\kron{cu_0+dv_0-2k}{n}=\kron{c}{d}\kron{2}{n}\kron{(cu_0+dv_0)/2-k}{n},
\]
where division is taken modulo the denominator of the Kronecker symbol, i.e. modulo $n$.  Similarly to Case B1, we have a sequence of at least $\lfloor\frac{n}{2xy}\rfloor$ consecutive Kronecker symbols, so we can pick a value of $k$ giving $\kron{u}{v}=1$ (as $\kron{c}{d}\kron{2}{n}$ is independent of $k$).

If $cu+dv$ is always even, then by considering only even $k$ or only odd $k$, we can ensure that $4\mid cu+dv$, once again making $\mu_2=1$. Analogous equations hold, yielding a sequence of at least $\lfloor\frac{n}{4xy}\rfloor$ consecutive Kronecker symbols, and we are done.

\textbf{Case B3:} $y$ is odd. Hence $h=4$. As $cu+dv=cu_0+dv_0-4k$, whether or not $2\mid\mid cu+dv$ is independent of $k$. If $2\mid\mid cu+dv$, then we see that incrementing $k$ by 1 changes $yu(cu+dv)+1$ by $4yu\equiv 4\pmod{8}$, i.e. $\mu_2$ swaps sign. In particular, taking either all even $k$ or all odd $k$ causes $\mu_2$ to be constant as a function of $k$.  Picking $k=2k'$ to be even and applying the same trick as in Case B of Lemma~\ref{lem:denomeventual} gives 
\[
\kron{u}{v}=\mu_2\kron{c}{d}\kron{cu_0+dv_0}{2^{v_2(n)}}\kron{(cu_0+dv_0)/8-k'}{\opart{n}}.
\]
As we have at least $\lfloor\frac{n}{8xy}\rfloor$ consecutive values of $k'$, we are done.

\end{proof}

Again, we consider square numerators separately.

\begin{lemma}\label{lem:numsquareseventual}
	Let $x$ and $y$ be coprime positive integers, and let $n\geq 8xy$ be an integral square satisfying $n\equiv x\pmod{\gcd(y, 4)}$. Furthermore, assume that one of the following is true:
     \begin{itemize}
         \item $y$ is odd and $\kron{x}{y}=1$;
         \item $(x, y)\equiv (1, 0)\pmod{4}$ and $\kron{x}{y}=1$;
         \item $y\equiv 2\pmod{4}$;
     \end{itemize}
    Then $n$ is a numerator in the orbit $\Psi\smcol{x}{y}$.
\end{lemma}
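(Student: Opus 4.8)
The plan is to rerun the proof of Lemma~\ref{lem:numeventual} unchanged up through equation~\eqref{eqn:kronjobbetter}, which expresses the Kronecker symbol of the parametrized pairs $(u,v)=(u_0+hyk,\,v_0-hxk)$ as
\[
\kron{u}{v}=\mu_2\,\kron{c}{d}\,\kron{cu_0+dv_0-hk}{n}.
\]
Since $n$ is a square, so is its odd part, hence $\opart{n}\equiv 1\pmod 8$; in particular the auxiliary matrix $\sm{x}{y}{c}{d}$ is built with $d\equiv 1\pmod 4$, and $\kron{t}{n}\in\{0,1\}$ for every $t$, equal to $1$ exactly when $\gcd(t,n)=1$. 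A direct check using $\det\sm{x}{y}{c}{d}=1$ shows $\gcd(cu+dv,\,n)=\gcd(u,v)$, so for each admissible $k$ with $\gcd(u_k,v_k)=1$ the formula collapses to $\kron{u_k}{v_k}=\mu_2\,\kron{c}{d}$. The crucial structural difference from Lemma~\ref{lem:numeventual} is that $\kron{\cdot}{n}$ is never $-1$, so there is no long run of Kronecker values to exploit; instead the argument must force $\mu_2\,\kron{c}{d}=+1$ outright, and then separately guarantee an admissible $k$ with $\gcd(u_k,v_k)=1$.

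Under the first two hypotheses (either $y$ odd with $\kron{x}{y}=1$, or $(x,y)\equiv(1,0)\pmod 4$ with $\kron{x}{y}=1$), the sign $\kron{c}{d}$ is determined: $d\equiv 1\pmod 4$ together with $xd-yc=1$ yields $\kron{c}{d}=\kron{x}{y}=1$, by the quadratic reciprocity computation in the proof of Lemma~\ref{lem:allkronequal} (applied to $\sm{x}{c}{y}{d}$ when $4\mid y$, as in Lemma~\ref{lem:denomsquareseventual}, and by the same manipulation directly when $y$ is odd). It then remains to eliminate $\mu_2$: as $k$ varies, $cu+dv=cu_0+dv_0-hk$ stays in a fixed class modulo $4$, so $\mu_2$ is either identically $1$ or flips sign with the parity of $k$; restricting $k$ to a single parity class makes $\mu_2\equiv 1$, whence $\kron{u_k}{v_k}=1$ for every such $k$ with $\gcd(u_k,v_k)=1$. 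That the permitted range $0\le k\le\frac{n}{hxy}-\frac{u_0}{hy}$ contains such a $k$ follows from $n\ge 8xy$ (the $8$, rather than the $4$ of Lemma~\ref{lem:denomsquareseventual}, is needed precisely because one may have to move to the second parity class) together with a routine sieve ruling out the few residue classes of $k$ on which $\gcd(u_k,v_k)>1$; then Lemma~\ref{lem:repcond} finishes the case.

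The hypothesis $y\equiv 2\pmod 4$ is the genuine obstacle, since now there is no Kronecker assumption available to pin down $\kron{c}{d}$. One first observes that $xd-yc=1$ with $y$ even forces $x$ and $d$ odd, and then $n\equiv x\pmod{\gcd(y,4)}$ (with $\gcd(y,4)=2$) forces $n$ odd, so $\opart{n}=n\equiv 1\pmod 8$. The key point is that the parameter $s$ in $c=c_0+sx$, $d=d_0+sy$ retains a bit of freedom after imposing $d\equiv 1\pmod 4$: incrementing $s$ by $2$ within its forced residue class mod $2$ changes $d$ by $2y\equiv 4\pmod 8$, so $d\pmod 8$ can be prescribed freely while keeping $d$ nonnegative and coprime to the relevant $u$-values (the latter being a condition only at odd primes). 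A short reciprocity computation gives $\kron{c}{d}=\kron{2}{d}\,\kron{x}{y/2}$, so choosing $d\equiv 1$ or $5\pmod 8$ appropriately makes $\kron{c}{d}=+1$. With $\kron{c}{d}=1$ secured, $\mu_2$ is handled exactly as in the previous paragraph, and Lemma~\ref{lem:repcond} again concludes. I expect the bookkeeping in this last case --- simultaneously meeting the congruence conditions on $s$ modulo $8$, the parity restriction on $k$, and the existence of a coprime pair within the comparatively short admissible range --- to be where the real work lies.
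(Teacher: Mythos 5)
Your setup and the first two bullet cases track the paper's argument closely, but with one misstep: the assertion that ``$cu+dv=cu_0+dv_0-hk$ stays in a fixed class modulo $4$'' only holds when $h=4$ (i.e.\ $y$ odd); when $4\mid y$ one has $h=1$, and the paper instead notes that $4\mid y$ and $2\mid cu+dv$ force $8\mid yu(cu+dv)$, giving $\mu_2=1$ for \emph{every} $k$ (for $y$ odd the paper likewise pins down $\mu_2=1$ always, via $\gcd(cu+dv,n)=\gcd(u,v)$ odd when $n$ even, and $4\mid c+v$ when $n$ odd), so no parity restriction on $k$ is required in either of the first two cases. The genuinely different route is in the $y\equiv 2\pmod 4$ case. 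The paper never computes $\kron{c}{d}$: it observes that $cu+dv$ is always even, that among $k\in\{0,1,2,3\}$ exactly two produce $2\mid\mid cu+dv$, and that these two yield \emph{opposite} values of $\mu_2$, so one of them gives $\mu_2\kron{c}{d}=1$ regardless of the unknown sign of $\kron{c}{d}$. This is the real reason $n\geq 8xy$ appears --- to fit $k=0,1,2,3$ in the admissible range --- not to accommodate a parity-class restriction plus sieve. Your alternative, forcing $\kron{c}{d}=1$ via $\kron{c}{d}=\kron{2}{d}\kron{x}{y/2}$ (a formula which I checked is correct) and an extra choice of $d\pmod 8$, is workable and more explicit, but it buys you less: it imposes additional congruence conditions on $s$, and then you further restrict $k$ to the parity with $4\mid cu+dv$, leaving only two admissible $k$'s whose coprimality your ``routine sieve'' remark does not actually establish. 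The paper's $\mu_2$-sign-flip argument sidesteps $\kron{c}{d}$ entirely, which is the cleaner bargain; and your opening claim that ``the argument must force $\mu_2\kron{c}{d}=+1$ outright'' is too strong --- the paper's strategy in the $2\mid\mid y$ case is designed precisely to avoid having to do so.
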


\begin{proof}
The proof of Lemma \ref{lem:numeventual} up to equation \eqref{eqn:kronjobbetter} is still valid. To summarize, we have a solution $u_0x+v_0y=n$ with $u_0\equiv 1\pmod{4}$, where $u_0>0$ is minimized. We are looking for pairs $(u, v)=(u_0+hyk, v_0-hxk)$ with $0\leq k\leq\frac{n}{hxy}-\frac{u_0}{hy}$ with $\kron{u}{v}=1$, where $h\in\{1, 2, 4\}$ is the smallest integer satisfying $4\mid hy$.

Nonnegative integers $c, d$ were chosen such that $d\equiv \opart{n}\equiv 1\pmod{4}$ (since $\opart{n}$ is still a square), $xd-yc=1$, and $d$ is coprime to all possible $u$ values. Since $n$ is a square, \eqref{eqn:kronjobbetter} implies that
\[\kron{u}{v}=\mu_2\kron{c}{d},\quad\text{where }\mu_2 = \begin{cases}
	\kron{yu(cu+dv)+1}{2} & \text{if $2\mid\mid cu+dv$;}\\
	1 & \text{else.}
\end{cases}\]

\textbf{Case 1:}  $y\not\equiv 2\pmod{4}$.  Then by $d \equiv 1 \pmod{4}$, the proof of Lemma \ref{lem:allkronequal} applies to $\sm{x}{y}{c}{d}$, so $\kron{u}{v} = \mu_2\kron{c}{d}=\mu_2\kron{x}{y} = \mu_2$.

\textbf{Case 1A:} $4\mid y$. 
As in the proof of Lemma \ref{lem:numeventual}, $\mu_2=1$, and we are done.

\textbf{Case 1B:} $y$ is odd.  If $n$ is even, then  $\gcd(cu+dv,n) = \gcd(cu+dv,xu+yv)=\gcd(u, v)$ is odd (as in the proof of Lemma~\ref{lem:numeventual}, Case A), hence we conclude $cu+dv$ is odd, and therefore $\mu_2=1$. Otherwise, $n\equiv 1\pmod{4}$, and
\[1=xd-yc\equiv x-yc\pmod{4},\quad 1\equiv n=ux+vy\equiv x+vy\pmod{4}.\]
This combines to $yc+vy\equiv 0\pmod{4}$, so $4\mid c+v$, and thus $cu+dv\equiv c+v\equiv 0\pmod{4}$, once again implying that $\mu_2=1$. 

In both cases 1A and 1B we are done, because $n \geq xy$, so the range of valid $k$ is non-empty.

\textbf{Case 2:} $2\mid\mid y$.  Hence $h=2$, and $n,x$ are odd. As $n$ is a square, $n\equiv 1\pmod{4}$, so $1\equiv n\equiv ux+vy\equiv x+2v\pmod{4}$. Similarly, $xd-yc=1$, so $x+2c\equiv 1\pmod{4}$, implying that $v\equiv c\pmod{2}$. In particular, $cu+dv=cu_0+dv_0-2k$ is always even. Considering $k=0, 1, 2, 3$, exactly two of these values will trigger $2\mid\mid cu+dv$, and their values of $\mu_2$ will be opposite. In particular, one of them will give $\kron{u}{v}=\mu_2\kron{c}{d}=1$, proving that $n$ is in the orbit. This will be valid as long as $\frac{n}{hxy}-\frac{u_0}{hy}\geq 3$, which follows from $n\geq 8xy$.
\end{proof}

\section{Proof of Theorems \ref{thm:semigroupreciprocity} and \ref{thm:TableForPsi}}
\label{sec:psi-few}

Theorem \ref{thm:semigroupreciprocity} follows immediately from Lemma~\ref{lem:kronpreserved}.

\begin{proof}[Proof of Theorem~\ref{thm:semigroupreciprocity}]
	First, assume one of the first two conditions. By Lemma~\ref{lem:kronpreserved}, if $\smcol{u}{v}$ is in the orbit, then $\kron{u}{v}=-1$, hence neither $u$ nor $v$ can be a square.
	
	Now assume that $(x, y)\equiv (3, 0)\pmod{4}$. Since $ax+by\equiv 3\pmod{4}$, it can never be a square, so it suffices to check if $cx+dy$ can be a square. If it is, then $\opart{(cx+dy)}\equiv 1\pmod{4}$, whence by Lemma~\ref{lem:kronpreserved},
	\[\kron{ax+by}{cx+dy}=(-1)^{(\opart{y}-1)/2}\kron{x}{y}=\kron{-1}{y}\kron{x}{y}=-1,\]
	again a contradiction.
\end{proof}

This also gives the positive result for the reciprocity obstructions listed in Theorem \ref{thm:TableForPsi}. The necessity of the congruence conditions follows immediately from Lemma \ref{lem:repcond}. In order to prove that this completely describes the orbit, we invoke the lemmas from Section \ref{sec:psi-many} alongside an analytic input from Section \ref{sec:analytic}.

\subsection{Analytic lemma}\label{sec:analytic}

For a positive non-square integer $n$, let $f(n)$ denote the smallest positive integer $m$ such that in any sequence of $m$ consecutive integers, there exists at least one value $x$ for which $\kron{x}{n}=1$, and one value $y$ for which $\kron{y}{n}=-1$. That is,
\[
f(n) 
= \min \left\{ m : \forall a \in \ZZ, \{-1, 1\} \subseteq \left\{ \kron{x}{n} : a < x \leq a+m \right\} \right\}.
\]
We will require two upper bounds for $f(n)$. The first one is a quick modification of the trivial bound $f(n)\leq n$.

\begin{lemma}\label{lem:fntrivialupperbound}
    Let $n\geq 5$ be odd and not square. Then $f(n)\leq n-1$.
\end{lemma}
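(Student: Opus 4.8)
The plan is to prove the slightly-better-than-trivial bound $f(n) \leq n-1$ for odd non-square $n \geq 5$ by exhibiting, for every $n$, at least one residue class modulo $n$ that can be ``skipped'' — more precisely, by showing that any window of $n-1$ consecutive integers already contains a value with $\kron{\cdot}{n}=1$ and a value with $\kron{\cdot}{n}=-1$. The trivial bound $f(n)\leq n$ holds because a window of $n$ consecutive integers hits every residue class mod $n$, and since $n$ is odd and non-square there is at least one $x$ coprime to $n$ with $\kron{x}{n}=-1$ (the Kronecker/Jacobi symbol $\kron{\cdot}{n}$ is a nontrivial character on $(\ZZ/n\ZZ)^\times$ precisely because $n$ is not a square), as well as $x=1$ giving $\kron{1}{n}=1$. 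To improve $n$ to $n-1$ I want to find a residue $r \bmod n$ such that removing $r$ from a full period still leaves both symbol values represented; equivalently, it suffices that there are at least two residues with $\kron{\cdot}{n}=1$ and at least two with $\kron{\cdot}{n}=-1$, OR that the unique representative of one sign is not ``isolated'' — but the cleanest route is the counting one.

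The key steps, in order: First, record that $\kron{\cdot}{n}$ restricted to $(\ZZ/n\ZZ)^\times$ is a group homomorphism to $\{\pm 1\}$, nontrivial because $n$ is not a perfect square (if it were trivial, every unit would be a square mod each prime power dividing $n$, forcing $n$ square). Hence exactly half of the $\varphi(n)$ units map to $+1$ and half to $-1$; since $n \geq 5$ is odd, $\varphi(n) \geq 4$, so there are at least two units of each sign. Second, observe that a block of $n-1$ consecutive integers omits exactly one residue class mod $n$; call it $\bar r$. Third, split into cases according to whether $\gcd(r,n)=1$. If $\gcd(r,n) > 1$ then $\kron{r}{n}=0$, so omitting $\bar r$ removes only a zero value and leaves all $\varphi(n) \geq 4$ units — hence both signs — still present. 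If $\gcd(r,n)=1$, then $\bar r$ accounts for at most one unit, and since each sign class has at least two units, both signs survive in the remaining $n-1$ residues. In either case the window of $n-1$ consecutive integers realizes both $+1$ and $-1$, so $f(n) \leq n-1$.

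I expect the only real subtlety to be the justification that $\kron{\cdot}{n}$ is a nontrivial character on $(\ZZ/n\ZZ)^\times$ for non-square odd $n$; this is standard (it follows from the Chinese Remainder Theorem decomposition $(\ZZ/n\ZZ)^\times \cong \prod (\ZZ/p_i^{e_i}\ZZ)^\times$ together with the fact that the Legendre symbol is a nontrivial character mod each odd prime, so if $n = \prod p_i^{e_i}$ has some $e_i$ odd the product character is nontrivial, and conversely all-even exponents makes $n$ a square). Everything else is elementary counting, so there is no hard analytic input here — that is reserved for the \emph{second}, deeper bound on $f(n)$ that the paper flags as forthcoming (presumably via Burgess or Pólya–Vinogradov-type estimates on character sums), which is not needed for this lemma. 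One minor bookkeeping point to handle carefully: the definition of $f(n)$ as stated quantifies over windows $a < x \leq a+m$, so I should phrase the argument in terms of such half-open windows of length $m = n-1$, which contain exactly $n-1$ consecutive integers and therefore exactly $n-1$ distinct residues mod $n$, missing precisely one.
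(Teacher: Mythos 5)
Your proof is correct and takes essentially the same approach as the paper: the Kronecker symbol $\kron{\cdot}{n}$ is a nontrivial character on $(\ZZ/n\ZZ)^\times$ because $n$ is a non-square, so it takes each value $\pm 1$ exactly $\varphi(n)/2 \geq 2$ times in a full period (using $\varphi(n)\geq 4$ for odd $n\geq 5$); a window of $n-1$ consecutive integers omits only one residue class and therefore still realizes both signs. The paper's proof is just a terser version of yours — it omits your explicit split on whether the missing residue is coprime to $n$, since either way dropping one residue from a period that has at least two of each sign cannot kill a sign entirely; and it supplies a brief justification for $\varphi(n)\geq 4$ (every odd $n\geq 5$ has a prime factor $\geq 5$ or is divisible by $9$) which you asserted but did not prove, though your assertion is correct.
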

\begin{proof}
    Let $\phi(n)$ denote Euler's totient function, which denotes the number of positive integers up to $n$ that are coprime to $n$. As $n$ is not a square, in the sequence $\kron{0}{n}, \kron{1}{n}, \ldots, \kron{n-1}{n}$, there are exactly $\phi(n)/2$ values of $1$, and the same number of $-1$'s. Thus if $\phi(n)\geq 4$, then any sequence of $n-1$ numbers still has one with Kronecker being $-1$, and one with Kronecker being $1$. If $n$ has a prime factor at least $5$ then $\phi(n)\geq 4$, and if $9\mid n$ this this again holds. All odd $n\geq 5$ satisfy at least one of these conditions, giving the result.
\end{proof}

For large $n$, we require a better bound. If $n\geq 2.5\cdot 10^9$ is prime, then Trevi\~{n}o shows that $f(n)\leq 1.55 n^{1/4} \log{n}$ \cite[Theorem 1]{Trevino12}. Since we require a result for non-primes as well, we need a new argument. Lemma~\ref{lem:fnupperbound} is good enough for our purposes, but presumably far from optimal (this function can be computationally explored with \texttt{kron\_seq\_both} in \cite{GHSemigroup}).

\begin{lemma}\label{lem:fnupperbound}
    Let $n\geq 3.41\cdot 10^6$ be a non-square integer with $n\not\equiv 2\pmod{4}$. Then
    \[f(n) \leq 0.942836\sqrt{n}\log{n}\log\log{n}.\]
\end{lemma}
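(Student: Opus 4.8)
The plan is to bound $f(n)$ by controlling, for a general non-square $n$ with $n \not\equiv 2 \pmod 4$, how long a run of consecutive integers can be before one hits a quadratic non-residue (and similarly a residue) modulo $n$. Write $n = 2^e m$ with $m$ odd, $e \in \{0,1,2,\dots\}$ but $e \neq 1$. The Kronecker symbol $\kron{\cdot}{n}$ is, on odd arguments, a real character modulo (a divisor of) $8m$; to produce a non-residue it suffices to produce a non-residue modulo the squarefree kernel, and to localize at a single prime $p \mid m$ with $p$ appearing to odd multiplicity, or at $2$ if $8 \mid n$. So the first reduction is: it is enough to bound, for a single odd prime power $p^a \| n$ with $a$ odd (such a prime exists since $n$ is not a square, unless the non-squareness is entirely carried by the power of $2$, which only happens when $4 \| n$ — a case handled directly), the least $M$ such that every window of $M$ consecutive integers contains both a residue and a non-residue mod $p$.

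**The main estimate** is then a Burgess-type / Pólya–Vinogradov character sum bound, but since we only want a clean effective result valid from a concrete cutoff, I would instead use the explicit least-non-residue bounds already available. The cleanest route: for the least positive integer with $\kron{\cdot}{n} = -1$ one can combine the multiplicativity of the Kronecker symbol with Treviño's explicit bound for primes — the hypothesis $n \geq 3.41 \cdot 10^6$ is chosen precisely so that $n$ has a prime factor $p$ appearing to odd multiplicity with $p$ large enough, OR $n$ is divisible by a small such prime, in which case the window length is tiny. Concretely: if $n$ has an odd-multiplicity prime factor $p \leq \sqrt[4]{n}\log n$ (say), then a window of length $p$ suffices and we are far below the claimed bound; otherwise every odd-multiplicity prime factor exceeds $\sqrt[4]{n}\log n$, but then $n$ has at most a bounded number of them and each contributes a Pólya–Vinogradov-type gap of size $O(\sqrt{p}\log p) = O(\sqrt{n}\log n)$. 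Chaining the character mod $n$ through the Chinese Remainder Theorem, a window of length $O(\sqrt{n}\,\log n \,\log\log n)$ — where the $\log\log n$ absorbs the number of prime factors, $\omega(n) \ll \log n / \log\log n$ — hits both values. The constant $0.942836$ comes from tracking the best available explicit Pólya–Vinogradov constant (e.g. the $\tfrac{1}{\pi\sqrt{e}}$-type constants of Pomerance or Frolenkov–Soundararajan) together with the explicit bookkeeping of $\omega(n)$ and a finite check for the range near the cutoff.

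**The main obstacle** I anticipate is the uniformity over the shape of $n$: unlike Treviño's prime case, a general $n$ can be a product of many prime powers, some of even multiplicity (which contribute nothing to non-residues and must be discarded) and some of odd multiplicity, and one must be careful that discarding the even part does not change $\kron{\cdot}{n}$ in a way that breaks the argument — it does not, since $\kron{x}{p^{2j}} = \kron{x}{1} = 1$ for $x$ coprime to $p$, but the coprimality bookkeeping (gcd with $n$, the factor of $2$, the value $0$) needs care. The second delicate point is making the $\log\log n$ genuinely do its job: one needs the explicit bound $\omega(n) \log(\text{stuff}) \leq (\text{const}) \log\log n \cdot \sqrt n \log n$ to hold from $n \geq 3.41\cdot 10^6$ onward, which forces a short interval of computer-assisted verification for moderately sized $n$ (exactly the sort of check the paper delegates to \texttt{kron\_seq\_both} in \cite{GHSemigroup}). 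Once those are in place, the estimate is a routine, if fussy, combination of explicit character-sum bounds and divisor-function estimates, and one reports the resulting constant.
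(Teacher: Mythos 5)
Your plan — reduce to a single odd-multiplicity prime factor $p \mid n$, invoke a least-non-residue bound modulo $p$, and chain the other primes via CRT — has a genuine gap that the paper's proof avoids by working directly with the full character $\chi_n = \kron{\cdot}{n}$.

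The problem is coprimality. Suppose $p$ is a small odd-multiplicity prime factor of $n$ and you look at a window of length $p$: you will certainly find integers $x_+$ and $x_-$ with $\kron{x_{\pm}}{p} = \pm 1$, but nothing forces those particular integers to be coprime to the \emph{other} prime factors of $n$. If $x_{\pm}$ shares a factor with $n/p^a$, then $\kron{x_{\pm}}{n} = 0$, and the window does not witness the value of $\kron{\cdot}{n}$ you wanted. When $p$ is tiny (e.g.\ $p=3$, so only one residue and one non-residue per period) and $n$ has many other prime factors, essentially all the candidate $x$'s can be knocked out. So your first case (``small odd-multiplicity prime $\Rightarrow$ tiny window'') is false as stated. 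Your second case (``all odd-multiplicity primes are large, chain by CRT'') is not really an argument: controlling residues simultaneously modulo several large primes by CRT would naively require a window of length comparable to their product, not their sum, and the phrase ``the $\log\log n$ absorbs the number of prime factors'' does not substitute for the actual estimate.

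The paper sidesteps both issues by not localizing at all. It applies the explicit Frolenkov--Soundararajan P\'olya--Vinogradov bound to the single character $\chi_n$ of modulus $n$ (this is where the hypothesis $n \not\equiv 2 \pmod 4$ enters, so that the modulus really is $n$ and not $4n$), then observes that a window containing no $+1$ (or no $-1$) forces $|S_{M,N}(\chi_n)| \ge \#\{x \text{ in window}: \gcd(x,n)=1\}$ since all nonzero terms share a sign. The coprime count is bounded below by $N\phi(n)/n - 2^{\omega(n)+1}$ using the standard M\"obius identity, and then Rosser--Schoenfeld for $n/\phi(n)$ and Robin's bound for $\omega(n)$ supply the $\log\log n$ and absorb the $2^{\omega(n)}$ error into the main term — this is precisely the bookkeeping your proposal hand-waves. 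If you want to salvage your approach, the missing ingredient is a sieve-type lower bound on the number of integers in the window coprime to $n$ that also satisfy a prescribed Legendre condition mod $p$; but once you write that down you will find you have reinvented the coprime-count step above, at which point applying P\'olya--Vinogradov directly to $\chi_n$ is simpler and gives a better constant.
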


\begin{proof}[Proof of Lemma~\ref{lem:fnupperbound}]
To begin, we will assume $n\geq 3.41\cdot 10^6$ everywhere. Define the non-principal Dirichlet character of modulus $n$, $\chi_n(x):=\kron{x}{n}$.

An explicit upper bound for the P\'{o}lya-Vinogradov inequality given by Frolenkov and Soundararajan \cite[Corollary 1]{FrolenkovSound} states that for $q \ge 100$, and $\chi$ a non-principal character of modulus $q$,
\[
S_{M,N}(\chi):=
\left|\sum_{M< x\leq M+N}\chi(x)\right|\leq
\frac{1}{\pi \sqrt{2}} \sqrt{q} \log q + \left(1 + \frac{6}{\pi \sqrt{2}}\right) \sqrt{q}.
\]

Under our assumptions that $q=n$ and $n\geq 3.41\cdot 10^6$, we obtain
\begin{equation}\label{eqn:SMNupper}
S_{M, N}(\chi_n)\leq \left(\frac{1}{\pi\sqrt{2}}+\frac{1}{\log{n}}\left(1 + \frac{6}{\pi \sqrt{2}}\right)\right)\sqrt{n}\log{n}\leq 
c_1 \sqrt{n}\log{n},
\end{equation}
where $c_1\approx 0.381338$.

Let $N = f(n) - 1$, and consider a sequence of $N$ consecutive integers $M+1, \ldots, M+N$ for which $\kron{x}{n}\neq 1$ (or for which $\kron{x}{n}\neq -1$). Since $\kron{x}{n}=\chi_n(x)$, there is no cancellation in $S_{M, N}(\chi_n)$, hence
\begin{equation}\label{eqn:SMNdefaultlower}
S_{M, N}(\chi_n)\geq \#\{x:M< x\leq M+N, \gcd(x, n)=1\}.
\end{equation}

To bound this, let $g_n(A)$ denote the number of integers from $1$ to $A$ that are coprime to $n$. A classic formula (for example, see the proof of Theorem 8.29 of \cite{NZMIntroNT}) gives that
\[g_n(A) = \sum_{d\mid n}\mu(d)\left\lfloor\frac{A}{d}\right\rfloor = A\frac{\phi(n)}{n}-\sum_{d\mid n}\mu(d)\left\{\frac{A}{d}\right\},\]
where $\mu(d)$ is the M\"{o}bius function and $\{x\}=x-\lfloor x\rfloor$. In particular, we have
\[\left|g_n(A)-A\frac{\phi(n)}{n}\right|<\sum_{d\mid n}|\mu(d)|\leq 2^{\omega(n)},\]
where $\omega(n)$ is the number of distinct prime factors of $n$.
Combining this with \eqref{eqn:SMNdefaultlower} yields
\[
S_{M, N}(\chi_n)\geq g_n(M+N)-g_n(M) \geq N\frac{\phi(n)}{n}-2^{\omega(n) + 1}.
\]
Using the upper bound \eqref{eqn:SMNupper} for $S_{M, N}$ and $f(n)=N+1$ gives an upper bound of
\begin{equation}
    \label{eqn:fbound}
f(n) \leq \frac{n}{\phi(n)}\left( \left(c_1+ \frac{ \phi(n) }{ n^{3/2} \log n} \right)\sqrt{n}\log{n}+2^{\omega(n)+1} \right) \leq
\frac{n}{\phi(n)}\left(0.381374\sqrt{n}\log{n}+2^{\omega(n)+1} \right).
\end{equation}
A bound for $\frac{n}{\phi(n)}$ can be found in Theorem 15 of \cite{RS62}:
\[\frac{n}{\phi(n)} < e^{\gamma}\log\log{n}+ \frac{2.50637}{\log\log{n}},\]
where $\gamma\approx 0.577216$ is the Euler–Mascheroni constant. Using $n\geq 3.41\cdot 10^6$ yields
\begin{equation}
    \label{eqn:phibound}
\frac{n}{\phi(n)}\leq 2.122133\log\log{n}.
\end{equation}

The final ingredient is a bound of Robin (Th\'{e}or\`{e}me 13 of \cite{Robin83}), which gives
\begin{equation*}
\omega(n)\leq \frac{\log{n}}{\log\log{n}-1.1714}.
\end{equation*}
Therefore
\[2^{\omega(n)}\leq n^{\log{2}/(\log\log{n}-1.1714)}\leq n^{0.450254}.\]
This gives
\begin{equation}
    \label{eqn:omegabound}
0.381374\sqrt{n}\log{n}+2^{\omega(n)+1}
\leq
\sqrt{n}\log{n}\left(0.381374+\frac{2}{n^{0.049746}\log{n}}\right)\leq
0.4442869 \sqrt{n}\log{n}.
\end{equation}

Combining \eqref{eqn:fbound}, \eqref{eqn:phibound} and \eqref{eqn:omegabound} gives
\[f(n) \leq 0.942836\sqrt{n}\log{n}\log\log{n},\]
proving Lemma~\ref{lem:fnupperbound}.
\end{proof}

\begin{remark}
    The only change to covering the $n\equiv 2\pmod{4}$ case would be that $\chi_n(x)=\kron{x}{n}$ now has modulus $4n$ instead of $n$, giving slightly worse bounds.
\end{remark}

\subsection{Proof of Theorem \ref{thm:TableForPsi}}
\label{sec:proof26}
As previously remarked, the necessity of the congruence conditions has been shown, and the existence of the claimed reciprocity obstructions proven. It remains to show that all other sufficiently large integers appear. The bulk of the work will come in applying Lemmas \ref{lem:denomeventual}, \ref{lem:numeventual}, where for $n\neq m^2, 2m^2$, it must be shown that $f(\opart{n})\leq \left\lfloor\frac{n}{8xy}\right\rfloor$.

\begin{proof}[Proof of Theorem \ref{thm:TableForPsi}]
If $x=0$ or $y=0$, the result follows immediately from Corollary~\ref{cor:psiorbitwithzero}. Otherwise, assume that $x,y>0$, and assume that $n$ is a non-square satisfying the congruence obstructions from Table~\ref{table:psi1obstructions}.

Next, also assume that $n\geq 2^kr$ and $n\geq 2^k\cdot 3.41\cdot 10^6$, where
\[\frac{\sqrt{r}}{\log{r}\log{\log{r}}}= 7.542795 xy,\]
and $k=\lfloor\log_2 8xy\rfloor$. First, we show that $r\geq 30000xy$. Let $xy=a$ and assume $r<30000a=:Ca$. Thus
\[
1=7.542795\frac{a\log{r}\log\log{r}}{\sqrt{r}}>7.542795\frac{a\log{Ca}\log\log{Ca}}{\sqrt{Ca}}\geq 7.542795\frac{\log{C}\log\log{C}}{\sqrt{C}}>1.04,
\]
a contradiction. Therefore $n\geq 8r\geq 240000xy$.

If $n$ is twice a square, then we are done by Lemmas \ref{lem:denomeventual}, \ref{lem:numeventual}. Otherwise, write $n=2^w\opart{n}$, and assume first that $2^w\geq 8xy$. In this case, 
\[\left\lfloor\frac{n}{8xy}\right\rfloor\geq\frac{n}{8xy}-1\geq\opart{n}-1\geq f(\opart{n})\]
by Lemma \ref{lem:fntrivialupperbound}, assuming $\opart{n}\geq 5$. If $\opart{n}=3$, then $n\geq 240000xy$ implies
\[\left\lfloor\frac{n}{8xy}\right\rfloor\geq 29999\geq 3 = f(\opart{n}).\]
The result now follows from Lemmas \ref{lem:denomeventual}, \ref{lem:numeventual}.

If $2^w<8xy$, then $w\leq k$, hence $\opart{n}\geq r$ and $\opart{n}\geq 3.41\cdot 10^6$. Thus
\[
\left\lfloor\frac{n}{8xy}\right\rfloor\geq
\frac{\opart{n}}{8xy}-1\geq
0.942836\sqrt{\opart{n}}\log{\opart{n}}\log{\log{\opart{n}}}
\geq f(\opart{n}),
\]
where the second inequality follows from both lower bounds on $\opart{n}$ in the previous sentence, the definition of $r$, and the fact that $\sqrt{r}/\log r \log\log r$ is an increasing function, and the third inequality is Lemma~\ref{lem:fnupperbound}. Lemmas \ref{lem:denomeventual} and \ref{lem:numeventual} again give the result.

Finally, assume $n\geq 8xy$ is a square. Lemmas \ref{lem:denomsquareseventual} and \ref{lem:numsquareseventual} fill in all cases where a reciprocity obstruction was not claimed.
\end{proof}

\section{An explicit orbit}\label{sec:explicit}

Since Theorem~\ref{thm:TableForPsi} is completely explicit, combining it with a computation allows us to completely describe given orbits of $\Psi$. 
Theorem~\ref{thm:psi123orbit} provides the first example of a completely described orbit with reciprocity obstructions.

\begin{proof}[Proof of Theorem \ref{thm:psi123orbit}]
    Using the notation of Theorem~\ref{thm:TableForPsi}, we have $8xy=48$, hence $2^k=32$. From $\frac{\sqrt{r}}{\log{r}\log{\log{r}}}=45.25677$, we derive $r\geq 3.41\cdot 10^6$. Therefore if $n\geq 1.0912\cdot 10^8$ is a non-square, then $n$ is a numerator in the orbit $\Psi\smcol{2}{3}$. We have a reciprocity obstruction, so squares do not occur, and thus it suffices to compute the non-squares in the orbit up to $1.0912\cdot 10^8$. This can be done with Lemma \ref{lem:repcond}. Code to do this in under half a minute is found in \cite{GHSemigroup}, in the method \texttt{test\_psi\_23orbit}.
\end{proof}

\section{Subsemigroups of $\Psi$ of Hausdorff dimension less than $1$}\label{sec:thin}

\subsection{Limit sets and Hausdorff dimensions for continued fraction semigroups}
\label{sec:hausdorff}

Recall that $\gamma = \genmtx \in \SL(2, \ZZ)$ acts on $\PP^1(\RR)$ by M\"{o}bius maps, i.e.  $\gamma(x) = \frac{ax+b}{cx+d}$.
Associated to any continued fraction semigroup $\Gamma \subseteq \SL(2,\ZZ)^{\ge 0}$ is its limit set $\Lambda(\Gamma)$:
\[
\Lambda(\Gamma) := \left\{
\lim_{n \rightarrow \infty} \gamma_0  \cdots  \gamma_n(1) : \gamma_j \in \Gamma \setminus \{ I \}
\right\}\subseteq (0, \infty).
\]
Equivalently, one can allow $\gamma_j$ from a generating set for $\Gamma$ in the expression above. The Hausdorff dimension of this limit set will be denoted $\delta(\Gamma)$.

Our first goal is to show that the limit set of $\Psi$ has Hausdorff dimension $1$, which we approach by starting with $\SL(2, \ZZ)^{\geq 0}$, and then $\Gamma_1(4)^{\geq 0}$.

\begin{lemma}\label{lem:sl2zgeq0hdim}
    $\delta(\SL(2, \ZZ)^{\geq 0}) = 1$.
\end{lemma}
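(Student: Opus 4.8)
The plan is to show that the limit set of $\SL(2,\ZZ)^{\geq 0} = \langle L, R\rangle^+$ already contains (or is contained between sets with the same dimension as) an interval, forcing the Hausdorff dimension to be $1$. Since the limit set lies in $(0,\infty)$ and Hausdorff dimension of a subset of $\RR$ is at most $1$, it suffices to prove the lower bound $\delta(\SL(2,\ZZ)^{\geq 0}) \geq 1$. I would exploit the continued fraction dictionary \eqref{eqn:evencontfrac}: applying words in $L$ and $R$ to the basepoint $1$ produces rationals whose even continued fraction expansions realize \emph{arbitrary} finite sequences of positive partial quotients. Hence the closure of $\langle L,R\rangle^+ \cdot 1$ contains every irrational in some fixed interval (in fact all of $[1,\infty)$ or a cofinite-measure portion of it), because any irrational $\alpha$ with a valid continued fraction expansion is a limit of the convergents obtained by truncating, and each convergent is $\gamma_0\cdots\gamma_n(1)$ for suitable $L,R$ words.

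Concretely, the key steps in order:
\begin{itemize}
  \item[(1)] Recall $\SL(2,\ZZ)^{\geq 0} = \langle L, R\rangle^+$ and the identification \eqref{eqn:evencontfrac} between words $L^{a_0}R^{a_1}\cdots$ and even continued fractions.
  \item[(2)] Observe that $\gamma(1)$ for $\gamma = L^{a_0}R^{a_1}\cdots R^{a_{2n-1}}$ is the rational $[a_0; a_1,\ldots,a_{2n-1}+1]$ (or a nearby convergent), so as the word length grows these values converge to any prescribed irrational $\alpha = [a_0; a_1, a_2, \ldots]$ with $a_i \geq 1$.
  \item[(3)] Conclude $\Lambda(\SL(2,\ZZ)^{\geq 0})$ contains all irrationals in $(1,\infty)$ (or at least a subinterval), hence contains a set of Hausdorff dimension $1$.
  \item[(4)] Combine with the trivial upper bound $\delta \leq 1$ (the limit set is a subset of $\RR$) to get equality.
\end{itemize}

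I expect the main technical point to be step (2)–(3): being careful that the defining expression for $\Lambda$ uses a limit of products $\gamma_0\cdots\gamma_n(1)$ with $\gamma_j \in \Gamma\setminus\{I\}$ rather than a single growing word, and checking that the nested-interval/contraction structure of the maps $L,R$ on $[1,\infty)$ really does force convergence to the intended $\alpha$ (i.e., that the cylinder sets shrink). This is standard for continued-fraction-type iterated function systems, so the argument should be short; the only mild subtlety is handling the basepoint $1$ and the non-uniqueness of continued fractions noted in Section~\ref{sec:results}, but neither affects the dimension count. An alternative, even softer, route: $\Lambda(\SL(2,\ZZ)^{\geq 0})$ contains $\Lambda(\Gamma_\mathcal{A})$ for every finite alphabet $\mathcal{A} = \{1,\ldots,A\}$, and $\delta_\mathcal{A} \to 1$ as $A \to \infty$ by Hensley's estimates, giving $\delta(\SL(2,\ZZ)^{\geq 0}) \geq \sup_A \delta_\mathcal{A} = 1$.
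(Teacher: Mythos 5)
Your proposal is essentially the paper's own proof: both use the identity~\eqref{eqn:evencontfrac} to identify truncations of the continued fraction expansion of an arbitrary positive irrational $\alpha$ with words in $L,R$ applied to the basepoint $1$ (the paper uses $L^{a_0}R^{a_1}\cdots L^{a_{2n-2}}R^{a_{2n-1}-1}(1)$), conclude that every positive irrational lies in $\Lambda(\SL(2,\ZZ)^{\geq 0})$, and invoke the trivial upper bound. The concern you flag in step (2)--(3) about the limit-set definition is resolved by the remark in Section~\ref{sec:hausdorff} that one may equivalently take the $\gamma_j$ from a generating set, so nothing more is needed; your alternative route via $\sup_\mathcal{A}\delta_\mathcal{A}=1$ is also correct but the paper does not use it.
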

\begin{proof}
    Let $\alpha=[a_0;a_1,a_2,\ldots]$ be a positive irrational number, and recall that elements of $\SL(2, \ZZ)^{\geq 0}$ can be expressed uniquely as words in $L,R$ (with nonnegative powers). By \eqref{eqn:evencontfrac} in Section 2,
    \[
    \frac{x}{y}=[a_0;a_1,a_2,\ldots,a_{2n-1}]\quad\Leftrightarrow\quad \frac{x}{y}=L^{a_0}R^{a_1}\cdots L^{a_{2n-2}}R^{a_{2n-1}-1}(1),
    \]
    whence $\alpha\in\Lambda(\SL(2, \ZZ)^{\geq 0})$, and therefore $\delta(\SL(2, \ZZ)^{\geq 0}) = 1$.
\end{proof}

Given two elements $\gamma_1,\gamma_2\in\SL(2, \ZZ)^{\geq 0}$, we say that $\gamma_1$ is an \emph{initial subword} of $\gamma_2$ if $\gamma_1^{-1}\gamma_2\in\SL(2, \ZZ)^{\geq 0}$. Equivalently, $\gamma_1$ is an initial subword of $\gamma_2$ if and only if $\gamma_2$'s LR-word begins with $\gamma_1$'s LR-word.

We extend the definition of limit set to \emph{subsets} $\Gamma\subseteq \SL(2,\ZZ)^{\ge 0}$, namely, $\Lambda(\Gamma)$ is the collection of limits $\lim_{n \rightarrow \infty} \gamma_n(1)$ of all the chains of words $\gamma_0 \le \gamma_1 \le \cdots$ where $\gamma_i \in \Gamma$ and $ a \le b$ indicates that $a$ is an initial subword of $b$. Write $\delta(\Gamma) = \delta(\Lambda(\Gamma))$, as before.

\begin{lemma}
\label{lemma:coset}
    Let $\Gamma \subseteq \SL(2,\ZZ)^{\ge 0}$ be a subsemigroup.  Suppose that $\Gamma=\bigcup_{i}  \Gamma_i$ for some finite set of subsets $\Gamma_i \subseteq \Gamma$.  Suppose further that for all $i,j$ there exists $\gamma_{i,j} \in \SL(2,\ZZ)^{\ge 0}$ so that $\gamma_{i,j} \Gamma_i \subseteq \Gamma_j$.  Then $\delta(\Gamma_i) = \delta(\Gamma)$.
\end{lemma}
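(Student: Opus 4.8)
The plan is to reduce everything to two soft facts: Hausdorff dimension is finitely stable (the dimension of a finite union is the maximum of the dimensions), and M\"obius transformations preserve Hausdorff dimension. It is cleanest to regard all limit sets as subsets of $\PP^1(\RR)$ equipped with a fixed Riemannian (circle) metric: on a bounded subset of $\RR\subseteq\PP^1(\RR)$ this metric is bi-Lipschitz equivalent to the Euclidean one, so the value of $\delta$ is unchanged, while every element of $\SL(2,\ZZ)$ acts on $\PP^1(\RR)$ as a smooth diffeomorphism, hence as a bi-Lipschitz self-map, hence preserves Hausdorff dimension. Phrasing things on $\PP^1(\RR)$ sidesteps the facts that some of the relevant limit sets are unbounded (e.g. $\sm{1}{4}{0}{1}\in\Psi$ has an orbit escaping to $\infty$) and that the $\gamma_{i,j}$ have poles.

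First I would show $\Lambda(\Gamma)=\bigcup_i\Lambda(\Gamma_i)$. The inclusion $\supseteq$ is immediate, since any chain of initial subwords lying in $\Gamma_i$ is also such a chain lying in $\Gamma$. For $\subseteq$, take a chain $\gamma_0\le\gamma_1\le\cdots$ with each $\gamma_n\in\Gamma=\bigcup_i\Gamma_i$ and with $\lim_n\gamma_n(0)=x$. As the index set is finite, the pigeonhole principle yields an index $i$ with $\gamma_n\in\Gamma_i$ for infinitely many $n$; that subsequence is still a chain, using that ``is an initial subword of'' is transitive (which holds because products of matrices with nonnegative entries have nonnegative entries), it lies in $\Gamma_i$, and it has the same limit $x$, so $x\in\Lambda(\Gamma_i)$. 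Finite stability of Hausdorff dimension then gives $\delta(\Gamma)=\max_i\delta(\Gamma_i)$.

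Next I would prove $\gamma_{i,j}\big(\Lambda(\Gamma_i)\big)\subseteq\Lambda(\Gamma_j)$ for all $i,j$. Given a chain $\gamma_0\le\gamma_1\le\cdots$ in $\Gamma_i$, the sequence $\gamma_{i,j}\gamma_0,\gamma_{i,j}\gamma_1,\ldots$ lies in $\gamma_{i,j}\Gamma_i\subseteq\Gamma_j$ and is again a chain, since $(\gamma_{i,j}\gamma_n)^{-1}(\gamma_{i,j}\gamma_{n+1})=\gamma_n^{-1}\gamma_{n+1}\in\SL(2,\ZZ)^{\geq 0}$; and continuity of $\gamma_{i,j}$ on $\PP^1(\RR)$ gives $\lim_n(\gamma_{i,j}\gamma_n)(0)=\gamma_{i,j}\big(\lim_n\gamma_n(0)\big)$, so the limit point of the new chain is the $\gamma_{i,j}$-image of that of the old. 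Since $\gamma_{i,j}$ preserves Hausdorff dimension and dimension is monotone under inclusion, this yields $\delta(\Gamma_i)\leq\delta(\Gamma_j)$; swapping $i$ and $j$ gives equality of all the $\delta(\Gamma_i)$, and combining with $\delta(\Gamma)=\max_i\delta(\Gamma_i)$ from the previous step we conclude $\delta(\Gamma_i)=\delta(\Gamma)$ for every $i$.

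I do not expect a genuine obstacle here, as the argument is formal; the only points needing care are the two bookkeeping checks that the initial-subword relation is transitive and is preserved under left multiplication by $\gamma_{i,j}$ (both reducing to closure of $\SL(2,\ZZ)^{\geq 0}$ under products), and the insistence on working in $\PP^1(\RR)$ with its smooth metric so that unbounded limit sets and the poles of the $\gamma_{i,j}$ do not interfere with the bi-Lipschitz invariance of Hausdorff dimension.
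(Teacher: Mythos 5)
Your proof is correct and follows essentially the same approach as the paper: pigeonhole on which $\Gamma_i$ contains infinitely many initial subwords to establish $\Lambda(\Gamma)=\bigcup_i\Lambda(\Gamma_i)$, finite stability of Hausdorff dimension to get $\delta(\Gamma)=\max_i\delta(\Gamma_i)$, and the M\"obius-invariance of dimension together with $\gamma_{i,j}\Lambda(\Gamma_i)\subseteq\Lambda(\Gamma_j)$ to equate the $\delta(\Gamma_i)$. Your version is slightly more explicit than the paper's (verifying transitivity of the initial-subword relation and its preservation under left multiplication, working in $\PP^1(\RR)$ to sidestep poles and unboundedness, and concluding via two one-sided inequalities rather than the paper's chain $\gamma_{i,j}\gamma_{j,i}\Lambda(\Gamma_j)\subseteq\gamma_{i,j}\Lambda(\Gamma_i)\subseteq\Lambda(\Gamma_j)$), but these are stylistic refinements of the same argument, not a different route.
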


\begin{proof}
Let $\alpha \in \Lambda(\Gamma)$, so that $\alpha = \lim_{n \rightarrow \infty} \gamma_0 \gamma_1 \cdots \gamma_n (1)$.  Considering all subwords $\gamma_0 \gamma_1 \cdots \gamma_n$, there exists some $i$ for which infinitely many such subwords lie in $\Gamma_i$, hence $\alpha \in \Lambda(\Gamma_i)$.   Therefore
\[
\bigcup_{i} \Lambda(\Gamma_i)\subseteq \Lambda(\Gamma) \subseteq \bigcup_{i} \Lambda(\Gamma_i),
\]
and equality follows. The Hausdorff dimension of a union is the supremum of the Hausdorff dimensions, hence $\delta(\Gamma)=\sup_i\delta(\Gamma_i)$.
Furthermore, 
\[\gamma_{i,j}\gamma_{j,i}\Lambda(\Gamma_j)\subseteq\gamma_{i,j} \Lambda(\Gamma_i) \subseteq \Lambda(\Gamma_j)\]

Therefore all the $\delta(\Gamma_i)$ are equal (since Hausdorff dimension is preserved by M\"obius transformation), and equal to $\delta(\Gamma)$.
\end{proof}

\begin{proposition}
\label{prop:dim1}
$\delta(\Psi) = 1$.
\end{proposition}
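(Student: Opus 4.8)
The plan is to apply Lemma~\ref{lemma:coset} twice: first to establish $\delta(\Gamma_1(4)^{\ge 0}) = 1$, then to transfer this to the sub-object $\Psi \subseteq \Gamma_1(4)^{\ge 0}$, which is ``half'' of $\Gamma_1(4)^{\ge 0}$ by Theorem~\ref{thm:psicharacterization}.

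For the first step, reduction modulo $4$ maps $\SL(2,\ZZ)^{\ge 0} = \langle L, R\rangle^+$ \emph{onto} $\SL(2,\ZZ/4)$: the images $\bar L,\bar R$ have order $4$, so their inverses are positive powers, whence the semigroup they generate equals the group they generate, which is the surjective image of $\SL(2,\ZZ)$. Partition $\SL(2,\ZZ)^{\ge 0}$ into the finitely many congruence classes $\Gamma_g := \{w \in \SL(2,\ZZ)^{\ge 0} : w \equiv g \Mod{4}\}$, $g \in \SL(2,\ZZ/4)$. Surjectivity produces, for every pair $g,h$, a word $w$ with $\bar w = hg^{-1}$, so $w\,\Gamma_g \subseteq \Gamma_h$; Lemma~\ref{lemma:coset} together with Lemma~\ref{lem:sl2zgeq0hdim} then forces $\delta(\Gamma_g) = \delta(\SL(2,\ZZ)^{\ge 0}) = 1$ for all $g$. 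In particular $\delta(\Gamma_I) = 1$ for $\Gamma_I := \{w \in \SL(2,\ZZ)^{\ge 0} : w \equiv I \Mod{4}\}$, and since $\Gamma_I \subseteq \Gamma_1(4)^{\ge 0} \subseteq \SL(2,\ZZ)^{\ge 0}$ and an ascending chain of initial subwords in the smaller set is one in the larger, the limit sets are nested; this squeezes $\delta(\Gamma_1(4)^{\ge 0}) = 1$.

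For the second step, write $\Gamma_1(4)^{\ge 0} = \Psi \sqcup \Psi'$, where by Theorem~\ref{thm:psicharacterization} the complement is $\Psi' = \{\genmtx \in \Gamma_1(4)^{\ge 0} : \kron{a}{b} = -1\}$ (a genuine partition, as $\gcd(a,b)=1$ forces $\kron{a}{b} = \pm1$). I would exhibit one matrix $\gamma_0 \in \Gamma_1(4)^{\ge 0}$ whose left action swaps the two pieces: $\gamma_0 = \sm{5}{3}{8}{5}$ works, and $\kron{5}{3} = \kron{2}{3} = -1$ shows $\gamma_0 \in \Psi'$. To check the swap, take $M = \sm{p}{q}{r}{s} \in \Gamma_1(4)^{\ge 0}$; then $\gamma_0 M \in \Gamma_1(4)^{\ge 0}$, with first column $\gamma_0\smcol{p}{r} = \smcol{5p+3r}{8p+5r}$. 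Since $(p,r)$ are coprime nonnegative integers with $(p,r) \equiv (1,0)\Mod{4}$, Lemma~\ref{lem:kronpreserved} applied to $\gamma_0$ and the pair $(p,r)$ gives $\kron{5p+3r}{8p+5r} = \kron{5}{3}\kron{p}{r} = -\kron{p}{r}$, while Lemma~\ref{lem:allkronequal} identifies $\kron{p}{r}$ with $\kron{p}{q}$ (for $M$) and identifies $\kron{5p+3r}{8p+5r}$ with the common row/column Kronecker symbol of $\gamma_0 M$. Hence $\gamma_0 M \in \Psi \iff \kron{p}{q} = -1 \iff M \in \Psi'$, and symmetrically $\gamma_0 M \in \Psi' \iff M \in \Psi$; that is, $\gamma_0 \Psi \subseteq \Psi'$ and $\gamma_0 \Psi' \subseteq \Psi$. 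Taking $\gamma_{i,i} = I$ and $\gamma_{i,j} = \gamma_0$ for $i \ne j$, Lemma~\ref{lemma:coset} with $\Gamma = \Gamma_1(4)^{\ge 0}$ and $\{\Gamma_i\} = \{\Psi, \Psi'\}$ yields $\delta(\Psi) = \delta(\Gamma_1(4)^{\ge 0}) = 1$.

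The first step is essentially soft finite-index bookkeeping around Lemma~\ref{lemma:coset}; the main obstacle is the second step, namely producing an explicit element of $\Gamma_1(4)^{\ge 0}$ of Kronecker symbol $-1$ and verifying that multiplication by it toggles the symbol for \emph{every} matrix in $\Gamma_1(4)^{\ge 0}$ — which is exactly what the Möbius–Kronecker machinery of Section~\ref{sec:kronpres} (Lemmas~\ref{lem:allkronequal} and~\ref{lem:kronpreserved}) is designed to supply.
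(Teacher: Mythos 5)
Your proposal is correct and follows essentially the same route as the paper: first apply Lemma~\ref{lemma:coset} with a finite congruence-class decomposition of $\SL(2,\ZZ)^{\ge 0}$ to get $\delta(\Gamma_1(4)^{\ge 0})=1$, then apply Lemma~\ref{lemma:coset} again to the two-piece decomposition $\Gamma_1(4)^{\ge 0} = \Psi \cup \Psi'$ using the Kronecker-toggling property of multiplication by an element of $\Psi'$ (the paper invokes the generic identity \eqref{eqn:kronchange} for an arbitrary $\gamma \in \Psi'$; you instantiate with the explicit $\gamma_0 = \sm{5}{3}{8}{5}$, which is a cosmetic rather than substantive difference).
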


\begin{proof}
First, we show that $\delta(\Gamma_1(4)^{\ge 0}) = 1$.  This uses Lemma~\ref{lemma:coset}, by observing that
\[
\SL(2,\ZZ)^{\ge 0} = \bigcup_{M \in \SL(2,\ZZ)/\Gamma_1(4)}  \Gamma_M,
\]
where $\Gamma_M = \{ \gamma \in \SL(2,\ZZ)^{\ge 0} : \gamma \equiv M \pmod{4} \}$.  Since $\delta(\SL(2,\ZZ)^{\ge 0})=1$ (by Lemma \ref{lem:sl2zgeq0hdim}) and $\Gamma_1(4)^{\ge 0}$ is a union of $\Gamma_M$'s, we are done.

Finally, $\delta(\Psi) = 1$ also follows from Lemma~\ref{lemma:coset}, by observing that
$\Gamma_1(4)^{\ge 0} = \Psi \cup \Psi'$, where $\Psi' = \Gamma_1(4)^{\ge 0} \setminus \Psi$.  If $\gamma \in \Psi'$, then $\gamma \Psi \subseteq \Psi'$ and $\gamma \Psi' \subseteq \Psi$ by \eqref{eqn:kronchange} in Section 3.  
\end{proof}

\subsection{Iterated function systems}

Given a matrix $\gamma=\genmtx\in\SL(2, \ZZ)^{\geq 0}$, associate to it
\[T_{\gamma}:=R\gamma R^{-1}=\lm{a-b}{b}{a+c-b-d}{b+d}.\]

Then $T_{\gamma}: [0,1] \rightarrow [0,1]$ is a conformal map, which is a contraction as long as $\gamma$ is not the identity matrix. Recall that $\SL(2, \ZZ)^{\geq 0}=\langle L, R\rangle^+$, and note
\[T_L(x) = \frac{1}{2-x}:[0, 1]\rightarrow [1/2, 1], \quad T_R(x) = \frac{x}{x+1}:[0, 1]\rightarrow [0, 1/2].\]

For a non-empty subset $\mathcal{W}\subseteq\SL(2, \ZZ)^{\geq 0}$ not containing the identity, consider the set
\[
\mathcal{C}_\mathcal{W} := \{ T_\gamma : \gamma \in \mathcal{W} \}.
\]

If $\mathcal{W}$ is finite, then two things happen:
\begin{enumerate}
    \item $\mathcal{C}_\mathcal{W}$ satisfies the \emph{open set condition}: 
  there exists an open set $U \subseteq [0,1]$ such that the images $T_\gamma(U)$ for $\gamma \in \mathcal{W}$ are pairwise disjoint. 
    \item $\mathcal{C}_\mathcal{W}$ satisfies the \emph{bounded distortion property}: there exists a $K_\mathcal{W} \ge 1$ such that for all $\gamma \in \mathcal{C}_\mathcal{W}$, and $x,y \in [0,1]$.
\[
|T_\gamma'(x)| \le K_\mathcal{W} |T_\gamma'(y)|.
\]
    \end{enumerate} 
Thus the set $\{ T_\gamma : \gamma \in \mathcal{W} \}$ forms a \emph{conformal iterated function system}.  For details, see \cite{MU}.

To any such finite $\mathcal{W}$, define the semigroups
\[\Gamma_\mathcal{W} := \left\langle \gamma : \gamma \in \mathcal{W} \right\rangle^+,\quad \Gamma_\mathcal{W}^R := \left\langle T_{\gamma} : \gamma \in \mathcal{W} \right\rangle^+ = R\Gamma_{\mathcal{W}}R^{-1}.\]

The limit set of a conformal iterated function system $\{ T_\gamma : \gamma \in \mathcal{W} \}$ is defined as
\begin{equation}
    \label{eqn:IFSlimitset}
\bigcup_{\substack{\gamma_0\gamma_1 \cdots \\ \gamma_i \in \mathcal{W}}}
\bigcap_{n=1}^\infty T_{\gamma_0 \cdots \gamma_n}([0,1]) = 
\bigcap_{n=1}^\infty \bigcup_{\gamma_0,\ldots, \gamma_n \in \mathcal{W}} T_{\gamma_0 \cdots \gamma_n}([0,1]),
\end{equation}
where the equality follows from the finiteness of the iterated function system.  This definition coincides with $\Lambda(\Gamma_\mathcal{W}^R)$ in our situation, as $\lim_{n \rightarrow \infty} T_{\gamma_0 \cdots \gamma_n}(1) = \cap_{n=1}^\infty T_{\gamma_0 \cdots \gamma_n}([0,1])$.

Since the Hausdorff dimensions of $\Lambda(\Gamma_\mathcal{W})$ and $\Lambda(\Gamma_\mathcal{W}^R)$ are equal, we can take advantage of the conformal iterated function system setup of $\Gamma_\mathcal{W}^R$ for computation.

\subsection{Hausdorff dimension of subsemigroups of $\Psi$ (proof of Theorem~\ref{thm:epsilon-dim})}
\label{sec:epsilon-dim}

In this section, we show that $\Psi$ has finitely generated subsemigroups whose Hausdorff dimension approaches $1$ from below, proving Theorem~\ref{thm:epsilon-dim}.
First, we give a hypothesis under which a finite subset $\mathcal{W}\subseteq\SL(2, \ZZ)^{\geq 0}$ gives rise to a limit set of Hausdorff dimension strictly less than one. This becomes a consequence of the fact that the limit set is a generalized Cantor set, which follows from the iterated function system formalism above.

\begin{lemma}
\label{lemma:lessthanone}
    Let $\mathcal{W}\subseteq\SL(2, \ZZ)^{\geq 0}$ be a finite set of non-identity matrices, and assume that the sets $T_{\gamma}([0, 1])$ with $\gamma\in\mathcal{W}$ are pairwise disjoint. Then $\delta(\Gamma_\mathcal{W}) <1$.
\end{lemma}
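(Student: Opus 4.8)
The plan is to exhibit the limit set $\Lambda(\Gamma_\mathcal{W})$ (equivalently $\Lambda(\Gamma_\mathcal{W}^R)$, whose Hausdorff dimension is the same) as a self-conformal Cantor-type set and then invoke the standard dimension theory of conformal iterated function systems. First I would recall from the preceding subsection that, because $\mathcal{W}$ is finite and the $T_\gamma$ with $\gamma \in \mathcal{W}$ are contractions of $[0,1]$ (each $\gamma \ne I$), the family $\mathcal{C}_\mathcal{W} = \{T_\gamma : \gamma \in \mathcal{W}\}$ has the bounded distortion property automatically, and that the hypothesis that the images $T_\gamma([0,1])$ are pairwise disjoint is a strengthening of the open set condition (take $U$ to be the interior of $[0,1]$, or small neighbourhoods of the $T_\gamma([0,1])$, noting the images are disjoint closed subintervals). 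Hence $\mathcal{C}_\mathcal{W}$ is a finite conformal iterated function system satisfying the open set condition, and its limit set, which by the identification in the previous subsection equals $\Lambda(\Gamma_\mathcal{W}^R)$, has a well-defined Hausdorff dimension governed by a pressure equation.

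The key step is then to argue the dimension is strictly less than $1$. Because the $T_\gamma([0,1])$, $\gamma \in \mathcal{W}$, are pairwise disjoint compact subintervals of $[0,1]$, their union has total length $\sum_{\gamma \in \mathcal{W}} |T_\gamma([0,1])| = \rho < 1$ for some $\rho$ bounded away from $1$ (strictly, since there are finitely many and they are disjoint, so there is a gap). At the $n$-th stage of the construction \eqref{eqn:IFSlimitset}, the limit set is covered by the intervals $T_{\gamma_0 \cdots \gamma_n}([0,1])$ for $\gamma_i \in \mathcal{W}$; using the bounded distortion property, the sum of the $s$-th powers of their lengths is comparable to $\left(\sum_{\gamma \in \mathcal{W}} |T_\gamma([0,1])|^s\right)^{n+1}$ up to a constant depending only on $\mathcal{W}$ and $s$. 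Define $\phi(s) = \sum_{\gamma \in \mathcal{W}} |T_\gamma([0,1])|^s$; then $\phi(1) = \rho < 1$, and $\phi$ is continuous, so $\phi(s) < 1$ for all $s$ in some interval $[s_0, 1]$ with $s_0 < 1$. For such $s$ the $n$-th stage covers give $\sum (\text{lengths})^s \to 0$, so the $s$-dimensional Hausdorff measure of the limit set is zero, whence $\dim_H \Lambda(\Gamma_\mathcal{W}^R) \le s_0 < 1$. Finally, $\delta(\Gamma_\mathcal{W}) = \delta(\Lambda(\Gamma_\mathcal{W})) = \delta(\Lambda(\Gamma_\mathcal{W}^R)) \le s_0 < 1$, as desired.

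Alternatively, and more cleanly, one can simply cite the conformal IFS machinery of Mauldin--Urba\'nski \cite{MU}: a finitely generated conformal IFS satisfying the open set condition whose limit set is not all of $[0,1]$ has Bowen dimension strictly less than the dimension of the ambient space. Since the images $T_\gamma([0,1])$ are pairwise disjoint, their union is a proper closed subset of $[0,1]$, so the limit set — contained in that union — is proper, and the result follows. I would present the self-contained covering argument as the main line, since it keeps the paper elementary, and mention the Mauldin--Urba\'nski reference as the conceptual explanation.

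The main obstacle, such as it is, is purely bookkeeping: making precise that ``pairwise disjoint images'' yields both the open set condition and a length deficit $\rho < 1$ that survives iteration, and handling the bounded distortion constant so that it does not accumulate across the $n$ levels (it does not, because in a covering sum one only needs comparability at each fixed level, and the geometric decay of $\phi(s)^{n+1}$ dominates any fixed multiplicative constant). There is no serious analytic difficulty here; the content is entirely in correctly quoting the IFS formalism already set up in the excerpt.
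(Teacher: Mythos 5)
Your argument is a genuinely different route from the paper's: you run a Moran-type covering/pressure argument, bounding the $s$-dimensional pre-measure of the level-$n$ cover by $\phi(s)^{n+1}$ with $\phi(s)=\sum_{\gamma\in\mathcal{W}}|T_\gamma([0,1])|^s$ and using $\phi(1)<1$ from the disjointness of the images, whereas the paper shows $\Lambda(\Gamma_\mathcal{W}^R)$ is a general Cantor set in the sense of Beardon (uniform lower bounds on relative child lengths and on relative gap sizes) and then cites Beardon's theorem. Your approach is the more self-contained and more quantitative of the two; the paper's is shorter if one accepts the black box.

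However, the step you flag as ``purely bookkeeping'' is actually the substantive issue, and your dismissal of it does not hold up. To get $\sum_{\gamma_0,\ldots,\gamma_n}|T_{\gamma_0\cdots\gamma_n}([0,1])|^s\lesssim\phi(s)^{n+1}$ with a constant independent of $n$, you need bounded distortion uniformly over \emph{all compositions} $T_{\gamma_0\cdots\gamma_n}$, not just over the generators; if you only have the generator-level bound $K$, the comparison constant grows like $K^{n}$ and you would need the much stronger $\phi(s)<K^{-s}$. Uniform distortion over compositions is precisely what can fail here: the generators are not uniformly contracting. For instance $T_{L^4}$ satisfies $T_{L^4}'(1)=1$ (a parabolic fixed point at the right endpoint), and a direct computation gives $|T_{(L^4)^n}'(1)|/|T_{(L^4)^n}'(0)|=(4n+1)^2\to\infty$. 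This is consistent with the hypotheses of the lemma ($\mathcal{W}=\{L^4,R^4\}$ has disjoint images), so it cannot be excluded. The paper's proof has the same latent difficulty — Beardon's condition on relative gap sizes also degrades along the parabolic word $(L^4)^k$ — so you are in good company, but the conclusion that the constant ``does not accumulate'' would need to be replaced by an actual argument handling the parabolic case (for example by passing to an induced, uniformly expanding subsystem, or by invoking the parabolic CIFS theory of Mauldin--Urba\'nski rather than the uniformly contracting one).
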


\begin{proof}
Let $\mathcal{W} = \{ \gamma_1, \ldots, \gamma_n \}$.
We will show that the limit set of the iterated function system $\{ T_\gamma : \gamma \in \mathcal{W} \}$ 
is contained in a generalized Cantor set, as defined in Beardon \cite{BeardonCantor}.  If so, Beardon guarantees the Hausdorff dimension of the limit set is less than that of the interval, i.e. less than one \cite[Theorem 8]{BeardonCantor}.

With reference to \cite[Definition 1]{BeardonCantor} and \eqref{eqn:IFSlimitset}, we define $\Delta_{i_0, \cdots, i_k} := T_{\gamma_{i_0} \cdots \gamma_{i_k}}([0,1])$, so
\[
\Lambda(\Gamma_\mathcal{W}^R) = \bigcap_{k=1}^\infty \bigcup_{i_1,\ldots,i_k=1}^{n} \Delta_{i_1, \cdots, i_k}.
\]
The initial intervals are disjoint by assumption, and each interval $\Delta_{i_0,\ldots,i_j}$ contains further disjoint subintervals $\Delta_{i_0,\ldots,i_j,i_{j+1}}$.
By the bounded distortion property and assumption, we have two consequences concerning the lengths $|\Delta_{i_0,\ldots,i_k}|$ of the intervals $\Delta_{i_0,\ldots,i_k}$:

\begin{enumerate}
    \item There is an absolute constant $1 > A > 0$ such that
\[
|\Delta_{i_1,\ldots,i_{k+1}}| \ge A |\Delta_{i_1,\ldots,i_k}|.
\]
\item There is an absolute constant $1 > B > 0$ such that for $s \neq t$, 
\[
\min\{ |x-y| : x \in \Delta_{i_1,\ldots,i_k,s}, y \in \Delta_{i_,\ldots,i_k,t} \} \ge B |\Delta_{i_1,\ldots,i_k}|.
\]
\end{enumerate}
This implies that we have constructed a generalized Cantor set.
\end{proof}

If we restrict our generators to $\Gamma_1(4)^{\geq 0}$, then the assumption of Lemma~\ref{lemma:lessthanone} can be phrased in terms of initial subwords.

\begin{lemma}\label{lem:subwordsdistintimages}
    Let $\gamma_1,\gamma_2\in\Gamma_1(4)^{\geq 0}$. Then $T_{\gamma_1}([0, 1])\cap T_{\gamma_2}([0, 1])\neq\emptyset$ if and only if either $\gamma_1$ is an initial subword of $\gamma_2$ or vice versa.
\end{lemma}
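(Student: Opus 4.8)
The statement to prove is Lemma~\ref{lem:subwordsdistintimages}: for $\gamma_1,\gamma_2\in\Gamma_1(4)^{\geq 0}$, the intervals $T_{\gamma_1}([0,1])$ and $T_{\gamma_2}([0,1])$ intersect if and only if one of $\gamma_1,\gamma_2$ is an initial subword of the other. The plan is to reduce everything to the structure of the interval-nesting under the maps $T_L$ and $T_R$, which generate $\Gamma^R := \SL(2,\ZZ)^{\geq 0}$ conjugated by $R$. The key geometric fact is that $T_L([0,1]) = [1/2,1]$ and $T_R([0,1]) = [0,1/2]$ are two intervals meeting only at the single point $1/2$, and more generally, for a word $w$ in $L,R$, the interval $T_w([0,1])$ is obtained by successively applying these two ``halving'' maps. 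Two such intervals $T_w([0,1])$ and $T_{w'}([0,1])$ are then nested (one inside the other) when one word is a prefix of the other, and otherwise are either disjoint or share only a single endpoint — and the shared-endpoint case is exactly what we must rule out using the extra hypothesis that the matrices lie in $\Gamma_1(4)^{\geq 0}$ rather than all of $\SL(2,\ZZ)^{\geq 0}$.

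\textbf{Key steps, in order.} First I would handle the ``if'' direction: if $\gamma_1$ is an initial subword of $\gamma_2$, write $\gamma_2 = \gamma_1\eta$ with $\eta\in\SL(2,\ZZ)^{\geq 0}$, so $T_{\gamma_2} = T_{\gamma_1}\circ T_{\eta}$ and hence $T_{\gamma_2}([0,1]) = T_{\gamma_1}(T_\eta([0,1])) \subseteq T_{\gamma_1}([0,1])$; the intersection is then nonempty (it equals the smaller interval). For the ``only if'' direction, suppose neither is an initial subword of the other. Looking at the $LR$-words of $\gamma_1$ and $\gamma_2$, let $w$ be their longest common prefix, so $\gamma_1 = w L \cdots$ and $\gamma_2 = w R \cdots$ (or vice versa) — here I use that every element of $\SL(2,\ZZ)^{\geq 0}$ has a \emph{unique} $LR$-word. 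Writing $T_{\gamma_i} = T_w\circ T_{L/R}\circ(\text{rest})$, the intersection $T_{\gamma_1}([0,1])\cap T_{\gamma_2}([0,1])$ is the $T_w$-image of the intersection of something inside $[1/2,1]$ with something inside $[0,1/2]$, hence is contained in the single point $T_w(1/2)$. So it remains to show this single point is \emph{not} actually attained by both intervals when $\gamma_1,\gamma_2\in\Gamma_1(4)^{\geq 0}$. The point $1/2$ is the common endpoint $T_L(0) = T_R(1)$, i.e. it is attained by $T_L(\cdot)$ only at the endpoint $0$ and by $T_R(\cdot)$ only at the endpoint $1$. Continuing the words past the branch point, $T_{\gamma_1}([0,1])$ reaches $T_w(1/2)$ only if the ``rest'' of $\gamma_1$'s word after $wL$ drives the sub-interval to the endpoint $0$, and similarly for $\gamma_2$'s word after $wR$ driving to endpoint $1$.

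\textbf{The main obstacle.} The crux — and the step I expect to require the most care — is showing that an element of $\Gamma_1(4)^{\geq 0}$ whose $LR$-word, after the branch letter, would force the sub-interval to collapse onto the branch endpoint simply \emph{cannot occur}, because the congruence conditions defining $\Gamma_1(4)$ (namely $a\equiv d\equiv 1$, $c\equiv 0\pmod 4$) force the $LR$-word to have a specific shape that keeps $T_\gamma([0,1])$ in the interior near that endpoint. Concretely, reaching the endpoint $0$ of $T_w([1/2,1])$ would require the tail word to be a pure power $R^k$ applied last (in the conjugated picture), but an element of $\Gamma_1(4)^{\geq 0}$ written as $L^{a_0}R^{a_1}\cdots$ has all the ``$L$-exponents except possibly the first'' and ``$R$-exponents except possibly the last'' divisible by $4$, and in particular is never a single letter $L$ or $R$; the relevant interval endpoints $T_w(1/2)$, being the images of $0$ or $1$, are limit points of the nested intervals but are attained only in the limit of an infinite word, never by a finite word in $\Gamma_1(4)^{\geq 0}$. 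I would make this precise by showing directly that for $\gamma\in\Gamma_1(4)^{\geq 0}$ the interval $T_\gamma([0,1])$ has both endpoints in the \emph{open} interval $(0,1)$ unless $\gamma = I$ — equivalently, the rational $T_\gamma(0)$ and $T_\gamma(1)$ are strictly between $0$ and $1$ — which follows from positivity of all four matrix entries together with $b,c>0$ (forced since $ad-bc=1$ with $a\equiv d\equiv 1$, $c\equiv 0\pmod 4$ rules out $b=0$ or $c=0$ for non-identity $\gamma$). Combining this with the common-prefix decomposition then yields that the two intervals are genuinely disjoint, completing the proof.
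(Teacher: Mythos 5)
Your "if" direction and the common-prefix reduction for "only if" are both fine and closely parallel the paper, which compares the endpoints $T_\gamma(0) = b/(b+d)$ and $T_\gamma(1) = a/(a+c)$ directly. The problem is the crux step you flag yourself: the two structural claims you lean on about $\Gamma_1(4)^{\geq 0}$ are both false, and even the target you are shooting for is not the right one.

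First, it is not true that a non-identity $\gamma = \sm{a}{b}{c}{d} \in \Gamma_1(4)^{\geq 0}$ must have $b,c>0$: $L = \sm{1}{1}{0}{1}$ has $c=0$, and $R^4 = \sm{1}{0}{4}{1}$ has $b=0$, and both lie in $\Gamma_1(4)^{\geq 0}$. Consequently, $T_L([0,1]) = [1/2,1]$ and $T_{R^4}([0,1]) = [0,1/5]$ each touch $\{0,1\}$, so "both endpoints in the open interval $(0,1)$" fails. Second, the asserted shape of $LR$-words is also wrong: $LR^4L = \sm{5}{6}{4}{5}$ is in $\Gamma_1(4)^{\geq 0}$ but its final $L$-exponent is $1$, not a multiple of $4$. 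Third, and more fundamentally, your plan is to show that an element $\gamma_1 = wLR^{k_1}$ of $\Gamma_1(4)^{\geq 0}$ "cannot occur." But such elements do occur: $R^4L^4 \in \Gamma_1(4)^{\geq 0}$ has the form $wL$ with $w = R^4L^3$, and indeed $T_{R^4L^4}(0) = T_{R^4L^3}(1/2) = 4/21$, so the shared boundary point is attained by a finite $\Gamma_1(4)^{\geq 0}$ word. Note also that "both endpoints in $(0,1)$," even where it holds, would not close the argument, since $T_w(1/2)$ itself lies in $(0,1)$.

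What actually saves the lemma is a contradiction obtained from $\gamma_1$ and $\gamma_2$ simultaneously, not from either alone. If $\gamma_1 = wLR^{k_1}$ and $\gamma_2 = wRL^{k_2}$ with $w = \sm{p}{q}{r}{s}$, a direct computation gives that the $(2,2)$-entry of $\gamma_1$ and the $(2,1)$-entry of $\gamma_2$ are both $r+s$; but the $\Gamma_1(4)$ conditions force these to be $\equiv 1$ and $\equiv 0 \pmod{4}$ respectively. This is exactly the paper's "cross" case: equating the endpoints $T_{\gamma_1}(0) = b_1/(b_1+d_1)$ and $T_{\gamma_2}(1) = a_2/(a_2+c_2)$ (which are in lowest terms) forces $d_1 = c_2$, a mod-$4$ contradiction, while the aligned cases $T_{\gamma_1}(i) = T_{\gamma_2}(i)$ yield $\gamma_1 = \gamma_2 R^k$ or $\gamma_1 = \gamma_2 L^k$, i.e.\ one is an initial subword of the other.
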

\begin{proof}
    It is clear that $T_{\gamma_1}([0, 1])$ and $T_{\gamma_2}([0, 1])$ have an interior point in common if and only if one of $\gamma_1$ and $\gamma_2$ is an initial subword of the other. 

This leaves only the case that $\gamma_1(i)=\gamma_2(j)$ for $i,j\in\{0, 1\}$. Let $\gamma_1=\genmtx$ and $\gamma_2=\sm{a'}{b'}{c'}{d'}$, and first assume $i=j=0$. Thus $\frac{b}{b+d}=\frac{b'}{b'+d'}$, and as these are reduced fractions, $b=b'$ and $d=d'$. Thus $\gamma_1=\gamma_2 R^k$ for some $k\in\ZZ$.
    The case of $i=j=1$ is similar:  $a=a'$ and $c=c'$, so $\gamma_1=\gamma_2 L^k$.

    Finally, assume $i=0$ and $j=1$, so that $\frac{b}{b+d}=\frac{a'}{a'+c'}$. Again, as these are reduced, $b=a'$ and so $d=c'$. Since our matrices are in $\Gamma_1(4)$, this gives $1\equiv d\equiv c'\equiv 0\pmod{4}$, a contradiction. 
\end{proof}

\begin{proposition}
\label{prop:epsilon-dim}
    $\Psi$ has finitely-generated subgroups $\Gamma'_n$, $n\geq 4$, of Hausdorff dimensions $\delta_n < 1$ approaching $1$.
\end{proposition}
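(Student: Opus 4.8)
The plan is to exhibit, for each $n \ge 4$, an explicit finite family $\mathcal{W}_n \subseteq \Psi$ of non-identity matrices whose associated contraction maps $T_\gamma$ have pairwise disjoint images, and then show that the Hausdorff dimensions $\delta_n := \delta(\Gamma_{\mathcal{W}_n})$ satisfy $\delta_n < 1$ (via Lemma~\ref{lemma:lessthanone}) while $\delta_n \to 1$ as $n \to \infty$. The natural candidates come from long words: fix a matrix $\sigma \in \Psi$ whose continued-fraction (i.e.\ $LR$-)word is long, and take $\mathcal{W}_n$ to be a set of $\sim n$ matrices, each of word-length roughly $n$, all lying in $\Psi$, and chosen so that no one is an initial subword of another. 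By Lemma~\ref{lem:subwordsdistintimages}, the no-initial-subword condition is exactly what guarantees the images $T_\gamma([0,1])$ are pairwise disjoint, so Lemma~\ref{lemma:lessthanone} immediately gives $\delta_n < 1$.

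First I would make the membership-in-$\Psi$ requirement manageable using Theorem~\ref{thm:psicharacterization}: a matrix $\genmtx \in \Gamma_1(4)^{\ge 0}$ lies in $\Psi$ exactly when $\kron{a}{b} = 1$. A clean way to force this is to build each generator as a product $\tau \rho$ where $\tau \in \Psi$ is fixed and $\rho$ ranges over a family in $\Psi$; since $\Psi$ is a semigroup, all such products lie in $\Psi$ automatically, and we only need to arrange the word-incomparability and disjointness combinatorially. For instance, one may take generators of the shape $L^{4}R^{4}\cdots$ with a distinguishing block of the form $R^{4k}$ for $k = 1, \dots, n$ inserted at a common position, so that the words are pairwise incomparable as $LR$-strings; one then checks each resulting matrix satisfies $\kron{a}{b}=1$ (which holds, e.g., because it is a product of members of $\Psi$ like $\sm{1}{4}{0}{1}$ and $\sm{1}{0}{4}{1}$, cf.\ the semigroups $\Psi_1, \Psi_2$).

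For the dimension estimate $\delta_n \to 1$, the idea is to compare $\Gamma_{\mathcal{W}_n}$ with the full semigroup (or a large subsemigroup) of $\Psi$, whose limit set has dimension $1$ by Proposition~\ref{prop:dim1}. Concretely: as $n$ grows, the family $\mathcal{W}_n$ can be taken to contain all reduced $LR$-words of length exactly $n$ whose letters come from a fixed large alphabet and which lie in $\Psi$ — since "lying in $\Psi$" removes at most half of the relevant words (by the $\Psi \cup \Psi'$ decomposition in the proof of Proposition~\ref{prop:dim1}, $\Psi$ is "half" of $\Gamma_1(4)^{\ge 0}$), the pressure/Bowen-equation count of $\mathcal{W}_n$ is within a bounded factor of that of all length-$n$ words, whose associated IFS has dimension tending to $\delta(\Psi) = 1$. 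Thus $\delta_n$ is squeezed between a quantity tending to $1$ and $1$ itself.

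The main obstacle is the lower bound $\delta_n \to 1$: the upper bound ($\delta_n < 1$) is a direct citation of Lemma~\ref{lemma:lessthanone} once disjointness is arranged, but controlling the dimension from below requires relating the Bowen-dimension of the finite IFS generated by $\mathcal{W}_n$ to that of the infinite system underlying $\Psi$, and making sure the "remove at most half the words" step does not cost more than a bounded multiplicative factor in the relevant transfer-operator count. Here one uses the bounded distortion property and the fact that for $\gamma$ of $LR$-length $\ell$ one has $|T_\gamma'| \asymp 2^{-\ell}$ up to the distortion constant $K_{\mathcal{W}}$, so that approximating the zero of the pressure function for $\mathcal{W}_n$ reduces to counting words, and the bounded-factor loss is absorbed in the limit. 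Once this counting comparison is in place, $\delta_n \to 1$ follows, and since each $\mathcal{W}_n \subseteq \Psi$ with $\delta_n < 1$, these $\Gamma_{\mathcal{W}_n}$ are the desired subsemigroups, completing the proof of Proposition~\ref{prop:epsilon-dim} and hence of Theorem~\ref{thm:epsilon-dim}.
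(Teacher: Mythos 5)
Your upper bound $\delta_n < 1$ is sound and matches the paper: arrange $\mathcal{W}_n$ so that no generator is an initial $LR$-subword of another, invoke Lemma~\ref{lem:subwordsdistintimages} to get disjoint images, then Lemma~\ref{lemma:lessthanone}. But the lower bound $\delta_n \to 1$ has two real problems.

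First, the construction itself does not tend to dimension $1$. You propose taking $\mathcal{W}_n$ to be words of length $\sim n$ over a \emph{fixed} finite alphabet. The semigroup generated by any fixed finite alphabet already has Hausdorff dimension some $\delta_0 < 1$, and taking longer words over that same alphabet generates a sub-IFS whose limit set is contained in the same Cantor set — so $\delta_n$ is bounded above by $\delta_0$ and cannot approach $1$. To get $\delta_n \to 1$ the generating sets must \emph{exhaust} $\Psi$ as $n$ grows. This is precisely why the paper takes $S_n = \{\gamma \in \Psi : \text{all entries} \leq n\}$: every finite $X \subseteq \Psi$ eventually sits inside some $S_n$, so $\sup_n \delta(\Gamma'_n) \geq \sup_{X \text{ finite}} \delta(\Gamma_X)$.

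Second, the pressure/counting heuristic is not valid as stated. You claim $|T'_\gamma| \asymp 2^{-\ell}$ for a word of $LR$-length $\ell$, but the derivative of $T_\gamma$ is governed by the size of the matrix entries, which for words of the same length ranges enormously (compare $L^\ell$, whose entries are $O(\ell)$, to an alternating word like $(LR)^{\ell/2}$, whose entries grow like $\phi^\ell$). The bounded distortion constant $K_\mathcal{W}$ controls variation of $T'_\gamma$ over $[0,1]$ for fixed $\gamma$, not across different $\gamma$ of a given length. Consequently, the claim that "removing at most half the words costs a bounded factor in the transfer-operator count" is not justified: removing half the words \emph{by count} could in principle remove the dominant contribution to the pressure. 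The paper sidesteps all of this by citing the Mauldin--Urbanski continuity result (\cite[Theorem 3.15]{MU}), which says exactly that for an infinite conformal IFS, the Hausdorff dimension of the whole is the supremum over finite sub-IFSs. Combined with $\delta(\Psi)=1$ (Proposition~\ref{prop:dim1}) and the exhausting family $S_n$, this gives $\delta_n \to 1$ rigorously without any word-counting.

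One more detail the paper handles carefully that your sketch should also address: after choosing a generating set with the no-initial-subword property, you must verify the reduced set still generates the \emph{same} semigroup as the original. The paper shows that if $\gamma_1, \gamma_2 \in S_n$ and $\gamma_1$ is an initial subword of $\gamma_2$, then $\gamma_3 := \gamma_1^{-1}\gamma_2$ lies in $S_n$ (it is in $\Psi$ by multiplicativity of the Kronecker symbol condition \eqref{eqn:kronchange}, and its entries are bounded by those of $\gamma_2$). So $\gamma_2$ can be deleted without shrinking the generated semigroup. Your proposal, which constructs $\mathcal{W}_n$ directly as a prefix-free set, would need a comparable argument to know what semigroup $\Gamma_{\mathcal{W}_n}$ actually is.
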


\begin{proof}
Let 
\[
S_n = \{\genmtx\in\Psi:a,b,c,d\leq n\},\quad \Gamma'_n = \left\langle S_n\right\rangle^+.
\]
If $\gamma_1,\gamma_2\in\Psi$ are such that $\gamma_1$ is an initial subword of $\gamma_2$, then we can write $\gamma_2=\gamma_1\gamma_3$ with $\gamma_3\in\Gamma_1(4)^{\geq 0}$. By \eqref{eqn:kronchange}, it follows that $\gamma_3\in\Psi$. Furthermore, if $\gamma_1,\gamma_2\in S_n$, then $\gamma_3\in S_n$ as well. 

Therefore removing $\gamma_2$ and repeating as needed, we can reduce $S_n$ to a subset of non-identity matrices $\mathcal{W}_n$ such that $\Gamma'_n=\Gamma_{\mathcal{W}_n}$,  and no element of $\mathcal{W}_n$ is an initial subword of another element.

By Lemma~\ref{lem:subwordsdistintimages}, the assumptions in Lemma~\ref{lemma:lessthanone} hold, so  $\delta(\Gamma'_n)=\delta(\Gamma_{\mathcal{W}_n})<1$. 

Note that for any finite $X \subseteq \Psi$, $\Gamma_X \subseteq \Gamma_n'$ for some $n$.  
Furthermore, the collection $\{T_{\gamma}:\gamma\in\mathcal{W}_n\}$ forms an infinite conformal iterated function system, and by Theorem 3.15 of \cite{MU}, 
\[
1 = \delta(\Psi) = \sup_{X \subseteq \Psi, |X| < \infty} \delta(\Gamma_X) = \lim_{n\rightarrow\infty} \delta(\Gamma'_n),
\]
giving the result.
\end{proof}

As a corollary to the proof, we observe that every finitely-generated subroup of $\Psi$ has Hausdorff dimension less than $1$.  Since $\Psi$ has Hausdorff dimension $1$, it is not finitely generated.  This proves Proposition~\ref{prop:psidim1}.  As a matter of interest, this can be proven constructively.

\begin{lemma}
\label{lemma:psi-infinite}
    Let
    \[
M_k := \sm{12k+1}{4k}{12k+4}{4k+1} \in \Psi, \quad k \ge 0.
\]
Then $M_k$ has no non-trivial initial subword in $\Gamma_1(4)^{\ge 0}$. 
In particular, $\Psi$ is not finitely generated.   
\end{lemma}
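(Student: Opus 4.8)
The plan is to verify directly that each $M_k = \sm{12k+1}{4k}{12k+4}{4k+1}$ lies in $\Psi$ and has no non-trivial initial subword in $\Gamma_1(4)^{\ge 0}$, and then deduce that $\Psi$ cannot be finitely generated. The membership $M_k \in \Psi$ follows from Theorem~\ref{thm:psicharacterization}: one checks $M_k \in \Gamma_1(4)^{\ge 0}$ (the entries are nonnegative, $12k+1 \equiv 4k+1 \equiv 1 \pmod 4$, and $12k+4 \equiv 0 \pmod 4$, and $\det M_k = (12k+1)(4k+1) - (4k)(12k+4) = 1$), and that $\kron{12k+1}{4k} = 1$. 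For the Kronecker symbol, note $\kron{12k+1}{4k} = \kron{12k+1}{4}\kron{12k+1}{\opart{4k}} = \kron{12k+1}{\opart{k}}$ (since $12k+1 \equiv 1 \pmod 4$ kills the $2$-part), and $12k+1 \equiv 1 \pmod{\opart k}$ whenever... — more carefully, $\kron{12k+1}{k'} = \kron{1}{k'} = 1$ where $k' = \opart{k}$ divides $k$, using that $12k + 1 \equiv 1 \pmod{k'}$. So $\kron{12k+1}{4k} = 1$ and $M_k \in \Psi$.

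For the key claim, recall from the discussion before Lemma~\ref{lemma:coset} that a non-identity initial subword of $M_k$ in $\SL(2,\ZZ)^{\ge 0}$ corresponds to a non-trivial common prefix in the $LR$-word of $M_k$; equivalently, by the argument in the proof of Lemma~\ref{lem:subwordsdistintimages}, $M_k$ having a non-trivial initial subword $\gamma$ means $\gamma = L$ is an initial subword or $\gamma = R$ is. Concretely, $M_k = L\cdot M'$ for some $M' \in \SL(2,\ZZ)^{\ge 0}$ iff the first column $(12k+1, 12k+4)$ has first entry $\ge$ second entry after subtracting... — the cleanest route is: $M_k$'s $LR$-word starts with $L$ iff $a \ge c$ and $b \ge d$ (i.e. $L^{-1}M_k$ has nonnegative entries), and starts with $R$ iff $c \ge a$ and $d \ge b$. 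Here $a = 12k+1 < 12k+4 = c$, so the word does not start with $L$; and $d = 4k+1 < 4k+... $ wait, $b = 4k$ and $d = 4k+1$, so $d \ge b$, but $c = 12k+4 > 12k+1 = a$ only gives $c \ge a$, so we'd need to check $R^{-1}M_k = \sm{a}{b}{c-a}{d-b} = \sm{12k+1}{4k}{3}{1}$, which has nonnegative entries, so the word \emph{does} start with $R$. This suggests I should instead argue that the initial subword must itself lie in $\Gamma_1(4)^{\ge 0}$: an initial subword $\gamma$ with $\gamma \in \Gamma_1(4)^{\ge 0}$ forces, by the column analysis in Lemma~\ref{lem:subwordsdistintimages}, that $\gamma$ shares a column with $M_k$, and since $\gamma \in \Gamma_1(4)$ its bottom-left entry is $\equiv 0 \pmod 4$ — but the relevant entries $12k+4$, $3$, $4k+1$, $4k$ of the reduced forms are not all compatible; the point is that no proper initial subword of the $LR$-word of $M_k$ can itself represent a matrix in $\Gamma_1(4)$. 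I would make this precise by writing out the $LR$-word $M_k = R^3 L^{a_2} R^{a_3}\cdots$ explicitly (it is short — the continued fraction of $(12k+1)/(12k+4)$ is $[0;1,2,k,3]$ or similar, computed via the Euclidean algorithm) and observing that the shortest prefix whose matrix lies in $\Gamma_1(4)$ is $M_k$ itself, since $\Gamma_1(4)$ membership requires the word-length partial products to land in the congruence subgroup, and checking the finitely many prefixes shows only the full word works.

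Finally, to conclude non-finite-generation: if $\Psi = \Gamma_X$ for a finite set $X$, then every element of $\Psi$ is a product of elements of $X$, so in particular some $M_k$ with $k$ large has a proper non-identity initial subword lying in $\Psi \subseteq \Gamma_1(4)^{\ge 0}$ (namely the first generator in such a product, which is non-identity and a prefix), contradicting the claim just proved — here I use that a prefix of a product $g_1 g_2 \cdots g_m$ with each $g_i \in \Psi$ includes $g_1$ itself as an initial subword, and $g_1 \neq I$ can be arranged by discarding trivial factors. Combined with Proposition~\ref{prop:dim1} (which already gives non-finite-generation via Hausdorff dimension) this gives an alternative constructive proof. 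The main obstacle I expect is the bookkeeping in the $LR$-word computation: pinning down exactly which prefixes of $M_k$'s word land in $\Gamma_1(4)$ and confirming that none of the proper ones do, uniformly in $k$ — this requires carefully running the continued fraction expansion of $(12k+1)/(12k+4)$ and tracking the $2$-adic and mod-$4$ behaviour of the partial matrix products, but it is a finite check once the word is in hand.
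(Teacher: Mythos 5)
Your overall strategy matches the paper's: identify the $LR$-word of $M_k$, show that no proper non-trivial prefix (equivalently, suffix) lands in $\Gamma_1(4)$, and conclude non-finite-generation from the pigeonhole argument at the end. But there is a concrete gap: you never actually determine the $LR$-word, and your guess is wrong. You suggest the word looks like $R^3 L^{a_2}R^{a_3}\cdots$ and that $\frac{12k+1}{12k+4}$ has continued fraction $[0;1,2,k,3]$; in fact $\frac{12k+1}{12k+4}=[0;1,4k,3]$, so
\[
M_k = R\,L^{4k}\,R^3.
\]
Everything hinges on getting this right, because the ``finite check'' you defer is precisely the verification that no proper prefix of this word lies in $\Gamma_1(4)$. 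Once the word is in hand, your prefix approach works cleanly and uniformly in $k$: the prefixes $R L^j$ for $0\le j\le 4k$ are $\sm{1}{j}{1}{j+1}$, with bottom-left entry $1\not\equiv 0\pmod 4$, and the remaining prefixes $RL^{4k}R$ and $RL^{4k}R^2$ have bottom-left entries $4k+2$ and $4k+3$, again $\not\equiv 0\pmod 4$. So none lie in $\Gamma_1(4)$. The paper instead examines the \emph{suffixes} $L^nR^3$ and observes that left-multiplication by $L$ modulo $4$ cycles $R^3=\sm{1}{0}{3}{1}$ through a length-$4$ orbit whose bottom-left entry is always $3$; both routes are equally short, but the paper's is uniform without needing to enumerate prefix shapes. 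As a minor point, your parenthetical justification that $\kron{12k+1}{4k}=1$ ``since $12k+1\equiv 1\pmod 4$ kills the $2$-part'' is not quite right (one needs $\pm1\pmod 8$, or an even $2$-adic valuation of $4k$, to dispose of $\kron{\cdot}{2}$); the conclusion happens to hold but the stated reason does not. Your final non-finite-generation deduction is correct, provided you add the observation that for $k$ larger than the cardinality of a hypothetical finite generating set $X$, some $M_k\notin X$ and hence must be a product of at least two non-identity generators, yielding a forbidden non-trivial initial subword.
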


\begin{proof}
Recall that $\SL(2, \ZZ)^{\ge 0}$ is generated by $L$ and $R$, with no relations.
    Observe that, in $\SL(2, \ZZ)$ modulo $4$, multiplication by $L$ on the left induces a cycle of order $4$ on $R^3 = \sm{1}{0}{3}{1}$:
    \[
    \lm{1}{0}{3}{1} \rightarrow
    \lm{0}{1}{3}{1} \rightarrow
    \lm{3}{2}{3}{1} \rightarrow
    \lm{2}{3}{3}{1} \rightarrow
    \lm{1}{0}{3}{1}
    \]
    Hence $L^nR^3 \notin \Gamma_1(4)$ for any $n \geq 0$.
    Thus any word of the form
    \[
    M_k = RL^{4k}R^3=\lm{12k+1}{4k}{12k+4}{4k+1}, \quad k \ge 0,
    \]
    is not a product of two non-trivial subwords lying in $\Gamma_1(4)^{\ge 0}$. As $M_k\in \Psi \subseteq \Gamma_1(4)^{\ge 0}$, there can be no finite set of generators for $\Psi$.
\end{proof}

The proof of Theorem~\ref{thm:epsilon-dim} depends on Proposition~\ref{prop:psi1table}, proven in the next section.  In particular, to show that an orbit avoiding squares is actually an example of a reciprocity obstruction, we need to control the congruence obstructions, so that we know they are not themselves ruling out squares.  An easy way to do this is to begin with a finitely generated subsemigroup whose congruence obstructions are known.  We define and investigate a few in the next section.

\begin{proof}[Proof of Theorem~\ref{thm:epsilon-dim}]
Note that $\Psi_1\subseteq\Gamma'_n\subseteq\Psi$ for all $n\geq 4$ ($\Psi_1$ is studied in the next section). By Proposition~\ref{prop:psi1table}, the congruence obstructions for $\Psi_1$ and $\Psi$ are identical, hence they hold for $\Gamma'_n$ as well. In particular, any orbit $\Psi\smcol{x}{y}$ that exhibits reciprocity obstructions ensures that $\Gamma'_n\smcol{x}{y}$ also exhibits reciprocity obstructions.
\end{proof}

\section{Obstructions in $\Psi_1$ and $\Psi_2$}
\label{sec:psi12}

While $\Psi$ is not finitely generated, we can consider finitely generated subsemigroups. Recall the following definitions from Section~\ref{sec:results}:
\[
\Psi_1 = \left\langle L, R^4 \right\rangle^+ = \left\langle\lm{1}{1}{0}{1}, \lm{1}{0}{4}{1}\right\rangle^+,\qquad
\Psi_2 = \left\langle L^4, R^4 \right\rangle^+ =\left\langle\lm{1}{4}{0}{1}, \lm{1}{0}{4}{1}\right\rangle^+.
\]

As subsemigroups of $\Psi$, their orbits avoid squares.  However, this fact is most interesting when squares are allowed by the congruence obstructions (turning them into reciprocity obstructions), and the Hausdorff dimension is sufficiently large.

For $\Psi_2$, by Corollary \ref{cor:psi2and0mod4} (proven later this section), it suffices to compute the Hausdorff dimension for continued fractions built from the alphabet $4\ZZ^+$. For this estimate, we thank Mark Pollicott for sharing with us his code to rigorously compute the Hausdorff dimensions for alphabets built from arithmetic progressions; see \cite{PollicottVytnova}. His code gives $\delta_2=0.6045578\pm 10^{-7}$.

For $\Psi_1$, the Hausdorff dimension certainly meets or exceeds that of $\Psi_2$.  For a simple non-rigorous estimate, we assume a result in the nature of Hensley's growth estimate \eqref{eqn:hensley} holds (from Section 1).  We compute the orbit of $\Psi_1\smcol{1}{1}$ up to $50000$, and estimate the growth rate using a least squares regression. This is done in the function \texttt{test\_psi1\_hdim} in \cite{GHSemigroup}, and results in the estimate of $\delta_1\approx 0.719$.

We classify the possible congruence obstructions in the next section.

\subsection{Strong approximation}
\label{sec:strongapprox}
We claim that any congruence obstruction in an orbit of $\Psi_1$ occurs modulo 4, and of $\Psi_2$ occurs modulo 16.  In other words, we show an explicit strong approximation property with the ``bad modulus'' being $2^2$ or $2^4$, respectively.

For a subgroup $G$ of $\SL(2, \ZZ)$ and a positive integer $n$, denote by $G/n$ the image of the reduction map $\rho_n:G\rightarrow\SL(2, \ZZ/n\ZZ)$. The full statement of explicit strong approximation is as follows.

\begin{theorem}\label{thm:strongapprox}
Let $i \in \{1,2\}$. Writing $n = \opart{n} n'$, with $\opart{n}$ odd and $n'$ a power of $2$, we have:
\begin{enumerate}
\item For $i=1,2$, $\Psi_i/n \cong \Psi_i/\opart{n} \times \Psi_i/n'$.
\item $\Psi_i/\opart{n} = \SL(2,\ZZ)/\opart{n}$.
\item There exists an exponent $e_i \ge 1$ such that for all $k \ge e_i$,
\[
(\Psi_i/2^k) / (\Psi_i/2^{e_i}) = (\SL(2,\ZZ)/2^k) / (\SL(2,\ZZ)/2^{e_i}).
\]
\item We have $e_1 = 2$ and $e_2 = 4$.
\end{enumerate}
\end{theorem}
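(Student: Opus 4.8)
The plan is to dispatch parts (1)--(2) with elementary group theory and the Chinese Remainder Theorem, and to attack (3)--(4) by a congruence-filtration (``strong approximation from one step'') argument bootstrapped by squaring. A preliminary remark makes the whole thing a statement about \emph{groups}: for any modulus $m$, the image $\Psi_i/m$ is a finite sub-semigroup of the finite group $\SL(2,\ZZ/m\ZZ)$, hence a subgroup (a finite cancellative semigroup is a group); containing the two generators, $\Psi_i/m$ therefore equals the reduction mod $m$ of the group they generate. For (2): recall $\SL(2,\ZZ)=\langle L,R\rangle$ (e.g. $LR^{-1}L=\sm{0}{1}{-1}{0}$); modulo an odd $m$ the element $R$ has order exactly $m$, so $R^4$ generates the same cyclic subgroup as $R$, and likewise $L^4$ generates the same subgroup as $L$; hence both $\langle L,R^4\rangle$ and $\langle L^4,R^4\rangle$ surject onto $\SL(2,\ZZ/\opart{n}\ZZ)$. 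For (1): CRT gives $\SL(2,\ZZ/n\ZZ)\cong\SL(2,\ZZ/\opart{n}\ZZ)\times\SL(2,\ZZ/n'\ZZ)$, and $\Psi_i/n$ visibly lies in $\Psi_i/\opart{n}\times\Psi_i/n'$; for the reverse inclusion I would write each target coordinate as a word alternating between the two generators (possible since every element of a finite $2$-generated group is such a word), pad the two words to a common alternating pattern, and then choose, slot by slot, a single nonnegative exponent congruent to the required values modulo the order of that generator in $\SL(2,\ZZ/\opart{n}\ZZ)$ (odd) and modulo its order in $\SL(2,\ZZ/n'\ZZ)$ (a power of $2$); these moduli are coprime, so CRT supplies the exponent, and the resulting word in $\Psi_i$ reduces correctly in both factors.

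For (3)--(4), set $K^{(j)}_k:=\ker(\SL(2,\ZZ/2^k\ZZ)\to\SL(2,\ZZ/2^j\ZZ))$. For $1\le j<k$ the map $I+2^jM\mapsto M$ identifies $K^{(j)}_k/K^{(j+1)}_k$ with $V:=\{M\in M_2(\ZZ/2\ZZ):\operatorname{tr}M=0\}$, a three-dimensional $\ZZ/2\ZZ$-space on which $\SL(2,\ZZ/2^k\ZZ)$ acts by conjugation through $\SL(2,\ZZ/2\ZZ)$. The crucial subtlety, and the whole reason a nontrivial $e_i$ is needed, is that the scalar matrix $I$ lies in $V$ (its trace is $0$ in characteristic $2$), so $V$ is \emph{not} an irreducible module; the standard odd-prime argument --- ``a subgroup surjecting onto one level and meeting the next layer nontrivially contains the whole layer by irreducibility of the adjoint representation'' --- is therefore unavailable, and for $\Psi_2$ the mod-$2$ image is even trivial.

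Next I would prove the reduction to a single step: to obtain $K^{(e_i)}_k\subseteq\Psi_i/2^k$ for \emph{all} $k\ge e_i$ it suffices to obtain it for $k=e_i+1$, together with $e_i\ge2$. This is an induction on $k$: assuming $K^{(e_i)}_k\subseteq\Psi_i/2^k$ with $k\ge e_i+1$, the surjection $\Psi_i/2^{k+1}\twoheadrightarrow\Psi_i/2^k$ gives a subset $S\subseteq\Psi_i/2^{k+1}\cap K^{(e_i)}_{k+1}$ mapping onto $K^{(e_i)}_k$; for each $M\in V$ choose $g\in S$ lifting $I+2^{k-1}M$ (which lies in $K^{(k-1)}_k\subseteq K^{(e_i)}_k$ since $k-1\ge e_i$), and compute $g^2\equiv I+2^kM\pmod{2^{k+1}}$, valid because $2(k-1)\ge k+1$ once $k\ge3$; since $\Psi_i/2^{k+1}$ is a group it contains all such $g^2$, hence all of $K^{(k)}_{k+1}$, and this together with $S$ generates $K^{(e_i)}_{k+1}$. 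The constraint $2(k-1)\ge k+1$ is exactly why this forces $e_i\ge2$.

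It then remains to verify the base cases and the minimality $e_1=2$, $e_2=4$. For $\Psi_1$ at level $8$: check that $L^4=\sm{1}{4}{0}{1}$, $R^4=\sm{1}{0}{4}{1}$ and $[L,R^4]=\sm{21}{-4}{16}{-3}$ all lie in $K^{(2)}_3$, with classes in $V$ equal to $\sm{0}{1}{0}{0}$, $\sm{0}{0}{1}{0}$, $\sm{1}{1}{0}{1}$, which span $V$; as $|K^{(2)}_3|=8=|V|$, products of these fill $K^{(2)}_3$, so $\Psi_1/8\supseteq K^{(2)}_3$. For $\Psi_2$ at level $32$: the analogous roles are played by $L^{16}=(L^4)^4$, $R^{16}=(R^4)^4$ and $[L^4,R^4]=\sm{273}{-64}{64}{-15}$, which lie in $K^{(4)}_5$ with classes $\sm{0}{1}{0}{0}$, $\sm{0}{0}{1}{0}$, $\sm{1}{0}{0}{1}$, again a basis of $V$, so $\Psi_2/32\supseteq K^{(4)}_5$. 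For minimality, $R^4\equiv I\pmod4$ gives $\Psi_1/4=\langle\sm{1}{1}{0}{1}\rangle$, of order $4$, which is too small to contain $K^{(1)}_2$ (order $8$); and $L^4\equiv R^4\equiv I\pmod4$ gives $\Psi_2/4=\{I\}$, while a direct computation gives $\Psi_2/16=\{\sm{1}{4a}{4b}{1}:a,b\in\ZZ/4\ZZ\}$, whose intersection with $K^{(3)}_4$ has order $4<8=|K^{(3)}_4|$; so no smaller exponent works. The main obstacle is genuinely the characteristic-$2$ analysis of (3)--(4): with $V$ reducible (and with the mod-$2$ image of $\Psi_2$ trivial), there is no module-theoretic shortcut, so one must do the critical-level computations by hand --- exhibiting explicit words (fourth/sixteenth powers and commutators of the generators) realizing a basis of $V$ --- and lean on the fact that $g\mapsto g^2$ induces an isomorphism $K^{(j)}_k/K^{(j+1)}_k\to K^{(j+1)}_k/K^{(j+2)}_k$ only once $j\ge2$, which simultaneously drives the bootstrap and pins the bad modulus at $2^{e_i}$ with $e_i\ge2$.
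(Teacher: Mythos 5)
Your proof is correct, and for the interesting parts (3) and (4) it follows a genuinely different path than the paper. For parts (1) and (2) you use essentially the same argument the paper gives in its Lemmas on coprime moduli and on odd moduli: part (1) is CRT combined with the observation that the orders of a fixed generator modulo $\opart{n}$ and modulo $n'$ are coprime (so exponents in a common padded word can be adjusted slot by slot), and part (2) uses that modulo an odd integer, $L^4$ and $R^4$ already generate the same cyclic subgroups as $L$ and $R$, after which one only needs that $L,R$ generate $\SL(2,\ZZ/\opart{n}\ZZ)$. (The paper arrives at $\sm{0}{1}{-1}{0}$ slightly differently, but this is cosmetic.)

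For parts (3)--(4), the paper argues by explicit ``column/row reduction'': it multiplies $M$ on the left by suitable powers of $R^4$ and $L^4$ to normalize $c$, then $a$, then $b$, eventually writing any matrix satisfying the mod-$2^{e_i}$ condition as a word in the generators modulo $2^k$. Your argument instead works with the congruence filtration $K_k^{(j)}=\ker\bigl(\SL(2,\ZZ/2^k)\to\SL(2,\ZZ/2^j)\bigr)$, identifies each graded piece with the trace-zero matrices $V$ over $\mathbb{F}_2$, and bootstraps one level at a time via the squaring map $g\mapsto g^2$, which pushes $K_k^{(k-1)}$ into $K_{k+1}^{(k)}$ once $k\ge 3$. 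This is a clean Frattini-type argument that reduces everything to an explicit check at the critical level ($k=e_i+1$), which you carry out correctly: the classes of $L^4$, $R^4$, $[L,R^4]$ (for $\Psi_1$ mod $8$) and of $L^{16}$, $R^{16}$, $[L^4,R^4]$ (for $\Psi_2$ mod $32$) do span $V$, and the order counts pin down $e_1=2$, $e_2=4$ as minimal. Your remark about the reducibility of $V$ (the scalar $I$ having trace $0$ in characteristic $2$) correctly identifies why the usual adjoint-irreducibility shortcut fails and why explicit words are needed at the base level. The paper's method is shorter and more elementary; yours is more structural, scales more transparently, and makes the constraint $e_i\ge 2$ conceptually visible through the requirement $2(k-1)\ge k+1$. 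Both are valid; no gaps.
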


The group $\SL(2, \ZZ)$ has a multiplicative structure, as a consequence of strong approximation for $\SL(2,\ZZ)$:
\[
\SL(2,\ZZ)/mn \cong \SL(2,\ZZ)/m \times \SL(2,\ZZ)/n, \quad \gcd(n,m)=1.
\]

\begin{lemma}\label{lem:coprimecombine}
	 Let $\gcd(m,n)=1$, and let $i \in \{1,2\}$. Then $\Psi_i/{mn}\cong \Psi_i/{m}\times \Psi_i/n$.
\end{lemma}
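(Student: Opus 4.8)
The plan is to combine the Chinese Remainder Theorem with the explicit unipotent generators of $\Psi_i$. First I would record the reduction: under the isomorphism $\SL(2,\ZZ/mn\ZZ)\cong\SL(2,\ZZ/m\ZZ)\times\SL(2,\ZZ/n\ZZ)$, the subsemigroup $\Psi_i/mn$ lands inside $\Psi_i/m\times\Psi_i/n$, because the two coordinate projections carry $\Psi_i/mn$ \emph{onto} $\Psi_i/m$ and onto $\Psi_i/n$ respectively (they are the reduction maps $\rho_m$, $\rho_n$ composed with $\rho_{mn}$). So the content of the lemma is the reverse containment $\Psi_i/m\times\Psi_i/n\subseteq\Psi_i/mn$, after which the Chinese Remainder isomorphism restricts to a semigroup isomorphism $\Psi_i/mn\cong\Psi_i/m\times\Psi_i/n$. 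Since $\Psi_i/mn$ is the image of the monoid $\Psi_i$ and hence closed under multiplication, it is enough to show that $\Psi_i/m\times\{e\}$ and $\{e\}\times\Psi_i/n$ both lie in $\Psi_i/mn$; then any pair $(\bar g,\bar h)$ is the product $(\bar g,e)(e,\bar h)$. By the symmetry $m\leftrightarrow n$ it suffices to handle $\Psi_i/m\times\{e\}$.

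The key step is a word-surgery argument exploiting that, in both cases, $\Psi_i=\langle P_1,P_2\rangle^+$ with $P_1=\sm{1}{c_1}{0}{1}$ and $P_2=\sm{1}{0}{c_2}{1}$ for $c_1,c_2\in\{1,4\}$. For any modulus $M$ the order $o_j(M)$ of $P_j$ in $\SL(2,\ZZ/M\ZZ)$ equals $M/\gcd(M,c_j)$, which divides $M$; consequently $\gcd(o_j(m),o_j(n))=1$, so by CRT there is a nonnegative integer $t_j$ with $t_j\equiv 1\pmod{o_j(m)}$ and $t_j\equiv 0\pmod{o_j(n)}$, whence $P_j^{t_j}\equiv P_j\pmod m$ and $P_j^{t_j}\equiv I\pmod n$. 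Given $g\in\Psi_i$, I would write $g=X_1X_2\cdots X_\ell$ as a word in the letters $X_k\in\{P_1,P_2\}$ (the case $g=I$ being trivial), and replace each letter $P_j$ by $P_j^{t_j}$ to form $g'\in\Psi_i$; reducing termwise gives $g'\equiv g\pmod m$ and $g'\equiv I\pmod n$, so $(\bar g,e)\in\Psi_i/mn$. This proves $\Psi_i/m\times\{e\}\subseteq\Psi_i/mn$, and the symmetric choice of exponents gives $\{e\}\times\Psi_i/n\subseteq\Psi_i/mn$, completing the argument.

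I expect the only step requiring genuine care to be the identity $o_j(M)=M/\gcd(M,c_j)$, and in particular its consequence $o_j(M)\mid M$: this is exactly what forces the two CRT moduli $o_j(m)$ and $o_j(n)$ to be coprime, and it relies on the explicit unipotent shape of the generators. (For an arbitrary finitely generated subsemigroup of $\SL(2,\ZZ)$ this kind of control over the generators is unavailable, which is precisely why explicit strong approximation is delicate in general.) The remaining verifications — surjectivity of the coordinate projections onto $\Psi_i/m$ and $\Psi_i/n$, membership $g'\in\Psi_i$, and the termwise reductions modulo $m$ and $n$ — are routine and I would not belabour them.
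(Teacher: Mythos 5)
Your proof is correct and follows essentially the same strategy as the paper's: surjectivity of the coordinate projections reduces the claim to showing that each factor $\Psi_i/m\times\{e\}$ and $\{e\}\times\Psi_i/n$ lifts, and this is achieved by the CRT exponent-surgery on the unipotent generators. The paper's version works directly with a pair $(M_1,M_2)$, choosing preimage words $W_1,W_2$ and modifying their exponents in one pass, which is the same idea phrased less modularly; your observation that the order of a unipotent $\sm{1}{c}{0}{1}$ modulo $M$ is $M/\gcd(M,c)$ (hence divides $M$) is a marginally cleaner route to the coprimality of orders than the paper's explicit formula $N\max(1,2^{e-2})$, but the two are interchangeable.
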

\begin{proof}
  We begin with $\Psi_2$.  Using the multiplicative structure of $\SL(2,\ZZ)$, suppose $M \in \SL(2,\ZZ)/mn$ maps to $(M_1, M_2) \in \Psi_2/m \times \Psi_2/n$.  We wish to show $M \in \Psi_2/mn$.
	There exists words $W_1$ and $W_2$ in $L^4$ and $R^4$ which descend to $M_1$ modulo $m$ and $M_2$ modulo $n$, respectively. Note that the order of  $L^4$ (respectively $R^4$) modulo $2^e N$, for $N$ odd, is $N \max(1, 2^{e-2})$. The orders of $L^4$ (respectively, $R^4$) modulo $m$ and $n$ are therefore coprime.  Thus, by the Chinese remainder theorem we can modify the exponents in $W_1$ and $W_2$ to produce a single word $W$ in $L^4$ and $R^4$ that reduces to $M_1$ and $M_2$ modulo $m$ and $n$ respectively.  It is clear that the $\Psi_1$ case works mutatis mutandis.
\end{proof}
 
\begin{lemma}\label{lem:oddsurj}
	Let $n$ be an odd positive integer.  Then $\Psi_2/n = \Psi_1/n = \SL(2,\ZZ)/n$.
\end{lemma}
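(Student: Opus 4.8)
The plan is to prove the statement for $\Psi_2$ and then deduce it for $\Psi_1$ by a sandwich argument: since $\Psi_2 \subseteq \Psi_1 \subseteq \SL(2,\ZZ)$, once we know $\Psi_2/n = \SL(2,\ZZ)/n$ we immediately get $\SL(2,\ZZ)/n = \Psi_2/n \subseteq \Psi_1/n \subseteq \SL(2,\ZZ)/n$, forcing all three to coincide. So the real content is the claim $\Psi_2/n = \SL(2,\ZZ)/n$ for $n$ odd. The first step I would make is to pass from semigroup to group: the image $\Psi_2/n$ lives inside the \emph{finite} group $\SL(2,\ZZ/n\ZZ)$, and a sub-semigroup of a finite group is automatically a subgroup, because any element $g$ satisfies $g^{-1} = g^{\mathrm{ord}(g)-1}$. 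Hence $\Psi_2/n$ is exactly the subgroup of $\SL(2,\ZZ/n\ZZ)$ generated by the reductions of $L^4$ and $R^4$.

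The second step uses that $n$ is odd. Modulo $n$, the matrix $L = \sm{1}{1}{0}{1}$ has order $n$ (its powers are $\sm{1}{j}{0}{1}$ for $j \in \ZZ/n\ZZ$), and since $\gcd(4,n)=1$ the reduction of $L^4$ also has order $n$, hence generates the same cyclic subgroup as $\overline{L}$; the same reasoning applies to $R$ and $R^4$. Therefore the subgroup generated by $\overline{L^4}, \overline{R^4}$ equals the subgroup generated by $\overline{L}, \overline{R}$. Now $L$ and $R$ generate $\SL(2,\ZZ)$ — for instance $L R^{-1} L = \sm{0}{1}{-1}{0}$, which together with $L$ is a standard generating set — so their reductions generate the image of $\SL(2,\ZZ)$ in $\SL(2,\ZZ/n\ZZ)$, which is all of $\SL(2,\ZZ/n\ZZ)$ by surjectivity of reduction modulo $n$ (strong approximation for $\SL_2$). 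This gives $\Psi_2/n = \SL(2,\ZZ)/n$, and then the sandwich above finishes the proof.

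I do not expect a genuine obstacle here: the argument is essentially bookkeeping, and the only points needing care are the finiteness used in the semigroup-to-subgroup passage, the coprimality of $4$ with $n$, and citing that $\{L,R\}$ generates $\SL(2,\ZZ)$ with surjective reduction. One could instead first reduce to prime powers via Lemma~\ref{lem:coprimecombine}, but working directly modulo $n$ sidesteps even that, since $4$ is invertible modulo every odd $n$.
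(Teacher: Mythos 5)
Your proof is correct, and the core mechanism is the same as the paper's: both use that $\gcd(4,n)=1$ to recover $L$ and $R$ from $L^4$ and $R^4$ modulo $n$, then invoke the fact that $L,R$ generate $\SL(2,\ZZ)$ (with surjective reduction), and both finish $\Psi_1$ by the sandwich $\Psi_2 \subseteq \Psi_1 \subseteq \SL(2,\ZZ)$. The one place you differ is in how you handle the semigroup-versus-group issue. The paper, having shown $\SL(2,\ZZ)^{\ge 0}/n \subseteq \Psi_2/n$, must still supply an explicit \emph{nonnegative} matrix reducing to $S = \sm{0}{1}{-1}{0}$, namely $\sm{n}{1}{n^2-1}{n}$, and then cite that $L,R,S$ generate $\SL(2,\ZZ)$. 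You instead observe at the outset that a subsemigroup of a finite group is automatically a subgroup (since $g^{-1} = g^{\mathrm{ord}(g)-1}$), which dispenses with the need for any explicit nonnegative representative of an inverse or of $S$, and lets you argue entirely at the level of cyclic subgroups of $\SL(2,\ZZ/n\ZZ)$. Your version is cleaner on that technical point; the paper's is more explicit and self-contained. Both are sound.
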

\begin{proof}
As $n$ is odd, $(L^4)^k \equiv L \pmod{n}$ for some $k$, hence $\rho_n(L)\in \Psi_2/n$; similarly, $\rho_n(R)\in \Psi_2/n$.  Therefore $\SL(2, \ZZ)^{\ge 0}/n \subseteq \Psi_2/n$, including $\sm{n}{1}{n^2-1}{n} \equiv \sm{0}{1}{-1}{0} \pmod{n}$. But $S := \sm{0}{1}{-1}{0}$ together with $L$ and $R$ generate $\SL(2, \ZZ)$, giving $\Psi_2/n = \SL(2,\ZZ)/n$. The final conclusion is a result of the inclusions $\Psi_2 \subseteq \Psi_1 \subseteq  \SL(2, \ZZ)$.
\end{proof}

\begin{lemma}\label{lem:16enough}
	Let $e\geq 5$ be a positive integer, and let $M\in \SL(2,\ZZ)/2^e$. Then $M\in \Psi_2/{2^e}$ if and only if $\rho_{16}(M)\in \Psi_2/{16}$.
\end{lemma}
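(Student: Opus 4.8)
The plan is to reduce the claim to the single containment
\[
K_e \;:=\; \ker\!\big(\SL(2,\ZZ/2^e\ZZ)\to\SL(2,\ZZ/16\ZZ)\big)\;\subseteq\;\Psi_2/2^e,
\]
and then to establish that containment by descending induction along the $2$-adic congruence filtration. Two preliminaries: a finite sub-monoid of a group is a subgroup, so $\Psi_2/2^e$ and $\Psi_2/16$ are subgroups of $\SL(2,\ZZ/2^e\ZZ)$ and $\SL(2,\ZZ/16\ZZ)$ respectively, and $\rho_{16}$ restricts to a surjective homomorphism $\Psi_2/2^e\to\Psi_2/16$ (surjective since $\Psi_2/16$ is by definition the image of $\Psi_2$, hence of $\Psi_2/2^e$). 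Granting the containment, the lemma follows: the forward implication is immediate, and for the converse, given $M$ with $\rho_{16}(M)\in\Psi_2/16$ choose $N\in\Psi_2/2^e$ with $\rho_{16}(N)=\rho_{16}(M)$; then $MN^{-1}\in K_e\subseteq\Psi_2/2^e$, so $M=(MN^{-1})N\in\Psi_2/2^e$.

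For the containment, set $G_j:=\ker\!\big(\SL(2,\ZZ/2^e\ZZ)\to\SL(2,\ZZ/2^j\ZZ)\big)$ for $4\le j\le e$, so $G_4=K_e$, $G_e=\{I\}$, and $G_4\supseteq G_5\supseteq\cdots\supseteq G_e$. For $1\le j\le e-1$, writing a general element of $G_j$ as $I+2^jN$ and imposing $\det=1$ forces $\operatorname{tr}N$ even, and $I+2^jN\mapsto N\bmod 2$ is a group homomorphism from $G_j$ onto a subgroup of the trace-zero matrices $\mathfrak{sl}_2(\ZZ/2\ZZ)$ with kernel $G_{j+1}$; since $|G_j/G_{j+1}|=8=|\mathfrak{sl}_2(\ZZ/2\ZZ)|$ this ``leading-term'' map induces an isomorphism $G_j/G_{j+1}\cong\mathfrak{sl}_2(\ZZ/2\ZZ)$.

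The key step is to exhibit, for each $j$ with $4\le j\le e-1$, three elements of $\SL(2,\ZZ)$ whose images lie in $G_j\cap(\Psi_2/2^e)$ and whose leading terms span $\mathfrak{sl}_2(\ZZ/2\ZZ)$. The two unipotent directions come from $(L^4)^{2^{j-2}}=\sm{1}{2^j}{0}{1}$ and $(R^4)^{2^{j-2}}=\sm{1}{0}{2^j}{1}$, which lie in $\Psi_2$ outright. The toral direction comes from the commutator, where a direct computation gives
\[
[L^4,R^4]=L^4R^4L^{-4}R^{-4}=\sm{273}{-64}{64}{-15}=I+16\sm{17}{-4}{4}{-1},
\]
so $[L^4,R^4]\in G_4$ with leading term $\sm{1}{0}{0}{1}$, and a binomial estimate gives $[L^4,R^4]^{2^{j-4}}\equiv I+2^j\sm{17}{-4}{4}{-1}\pmod{2^{j+1}}$ for $j\ge 4$, an element of $G_j$ with the same leading term. (Its reduction lies in $\Psi_2/2^e$ because that set is a group; equivalently, $L^{-4},R^{-4}$ are positive powers of $L^4,R^4$ modulo $2^e$.) As $\sm{0}{1}{0}{0}$, $\sm{0}{0}{1}{0}$, $\sm{1}{0}{0}{1}$ form a basis of $\mathfrak{sl}_2(\ZZ/2\ZZ)$, these three elements together with $G_{j+1}$ generate $G_j$.

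The containment $K_e\subseteq\Psi_2/2^e$ now follows by descending induction on $j$ from $j=e$ (where $G_e=\{I\}$) down to $j=4$: if $G_{j+1}\subseteq\Psi_2/2^e$, then the group $\Psi_2/2^e$ contains $G_{j+1}$ and the three level-$j$ elements, hence contains $G_j$. The hypothesis $e\ge 5$ is exactly what makes all exponents $2^{j-4}$ (for $4\le j\le e-1$) nonnegative integers and $G_4$ a genuine kernel. I expect the main obstacle to be the bookkeeping at the bottom level $j=4$: one must recognize that the diagonal direction of $\mathfrak{sl}_2(\ZZ/2\ZZ)$ is supplied not by any power of $L^4$ or $R^4$ but by their commutator, and that this commutator becomes trivial modulo $16$ but not modulo $32$ — which is precisely why the threshold in the statement is $16$ rather than $8$. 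The remaining ingredients are routine $2$-adic matrix arithmetic.
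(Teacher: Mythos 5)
Your proof is correct and takes a genuinely different route from the paper. The paper's argument is a direct row reduction: given $M$ with $\rho_{16}(M) \in \Psi_2/16$, it multiplies $M$ on the left by explicit powers of $L^4$ and $R^4$ in $\SL(2,\ZZ/2^e\ZZ)$ so as to clear the off-diagonal entries, reducing $M$ to a matrix of the form $\sm{1}{b}{0}{1}$ with $4 \mid b$; unwinding the steps exhibits $M$ as a word in $L^{\pm 4}$ and $R^{\pm 4}$, which lies in $\Psi_2/2^e$ since that set is a finite submonoid of a group, hence a group. You instead reduce the lemma to the single containment $K_e \subseteq \Psi_2/2^e$ and climb the congruence filtration $G_4 \supseteq \cdots \supseteq G_e = \{I\}$, filling out each graded quotient $G_j/G_{j+1} \cong \mathfrak{sl}_2(\ZZ/2\ZZ)$ with the appropriate $2$-powers of $L^4$, $R^4$, and $[L^4,R^4]$. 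The paper's proof is shorter and more elementary; yours is more structural and has the pleasant byproduct of explaining why the threshold is $16$ rather than $8$: the diagonal direction of $\mathfrak{sl}_2(\ZZ/2\ZZ)$ is supplied only by the commutator $[L^4,R^4]$, which is congruent to $I$ modulo $16$ but not modulo $32$, so the filtration argument can only start at level $j=4$. One small economy worth noting in your version: once you have produced three elements of $G_j \cap (\Psi_2/2^e)$ whose leading terms span $\mathfrak{sl}_2(\ZZ/2\ZZ)$, surjectivity of the leading-term map follows automatically, so the appeal to the order count $|G_j/G_{j+1}| = 8$ is a useful sanity check but is not logically required.
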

\begin{proof}
We have a group
\[
G = \left\{ \genmtx : b \equiv c \equiv 0 \bmod{4},\;\; a \equiv d \equiv 1 \bmod{16} \right\} \subseteq \SL(2, \ZZ).
\]
In particular, $L^4, R^4 \in G$, and in fact $G/16 = \Psi_2/16$.

Now, assume $M= \genmtx \in \SL(2,\ZZ)/2^e$ satisfies $\rho_{16}(M) \in G/16$. By possibly replacing $M$ by $R^{-4}M=\sm{a}{b}{c-4a}{d-4b}$, we can assume that $v_2(c)=2$.  Observe that $v_2(4kc)=4+v_2(k)$, so there exists a $k$ such that $a+4kc\equiv 1\pmod{2^e}$.  Thus we replace $M$ with $L^{4k}M=\sm{a+4kc}{b+4kd}{c}{d}$, so we can assume $a\equiv 1 \pmod{2^e}$. Next, replacing $M$ with $R^{4\ell}M$ for an appropriate $\ell$, we can assume $M = \sm{1}{b}{0}{d}\pmod{2^e}$. As this has determinant $1$, we observe that $d \equiv 1\pmod{2^e}$, and in fact $M \equiv L^{b}\pmod{2^e}$. As $\rho_{16}(M) \in G/2^e$, $b$ is a multiple of 4, so $b=4b'$. Going back to our original element $M$, we have written
\[M = R^{\text{0 or 4}}L^{-4k}R^{-4\ell}L^{4b'},\]
so by choosing $-k,-\ell,b'\geq 0$, we have written $M$ as a word in $L^4$ and $R^4$, proving that $M\in \Psi_2/{2^e}$.
\end{proof}

\begin{lemma}\label{lem:4enough}
	Let $e\geq 3$ be a positive integer, and let $M\in \SL(2,\ZZ)/2^e$. Then $M\in \Psi_1/{2^e}$ if and only if $\rho_4(M)\in \Psi_1/{4}$.
\end{lemma}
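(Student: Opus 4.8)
The ``only if'' direction is immediate, since the reduction $\rho_4\colon\Psi_1\to\Psi_1/4$ factors through $\Psi_1\to\Psi_1/2^e\to\Psi_1/4$, so $M\in\Psi_1/2^e$ forces $\rho_4(M)\in\Psi_1/4$. For the converse, the plan is to mimic the proof of Lemma~\ref{lem:16enough} almost verbatim, with $L$ playing the role that $L^4$ plays there; the ``bad modulus'' drops from $16$ to $4$ precisely because $L$ itself, and not only $L^4$, is a generator of $\Psi_1$. First note that $\Psi_1/4=\langle\rho_4(L)\rangle=\{\sm{1}{b}{0}{1}:b\in\ZZ/4\ZZ\}$, since $R^4\equiv I\pmod 4$; hence the hypothesis $\rho_4(M)\in\Psi_1/4$ says exactly that $M=\genmtx\in\SL(2,\ZZ/2^e\ZZ)$ satisfies $a\equiv d\equiv 1\pmod 4$ and $c\equiv 0\pmod 4$. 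Also $\Psi_1/2^e$ is a finite sub-monoid of $\SL(2,\ZZ/2^e\ZZ)$, hence a subgroup, so it suffices to left-multiply $M$ by generators of $\Psi_1$ and arrive at \emph{any} element of $\Psi_1/2^e$.

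I would then run the following reductions, each a one-line calculation:
\begin{enumerate}
  \item Replace $M$ by $(R^4)^jM$ for a suitable $j$ so that $c$ has $2$-adic valuation exactly $2$. This is possible because the new lower-left entry $4ja+c$ runs through all multiples of $4$ modulo $2^e$ as $j$ varies ($a$ being a unit, $c$ a multiple of $4$), and $4$ has valuation $2$ in $\ZZ/2^e\ZZ$ as $e\ge 3$. The top row is unchanged and the bottom-right entry stays $\equiv 1\pmod 4$.
  \item Replace $M$ by $L^mM$ for a suitable $m$ so that $a\equiv 1\pmod{2^e}$. The top-left entry changes by $mc$, which runs through all multiples of $4$ since $v_2(c)=2$; as $a\equiv 1\pmod 4$ this can be made $\equiv 1\pmod{2^e}$. \emph{This is the step where having $L$, rather than only $L^4$, lets us get away with the hypothesis modulo $4$ instead of modulo $16$.}
  \item Replace $M$ by $(R^4)^jM$ for a suitable $j$ so that $c\equiv 0\pmod{2^e}$, using that $4j$ runs through all multiples of $4$ and $c$ is a multiple of $4$.
\end{enumerate}
After these steps $M\equiv\sm{1}{b}{0}{d}\pmod{2^e}$; since $\det M=1$ and we have only left-multiplied by elements of $\Psi_1/2^e$, we get $d\equiv 1\pmod{2^e}$, so $M\equiv L^b\pmod{2^e}$, which lies in $\Psi_1/2^e$. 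Undoing the left-multiplications (each generator has finite order modulo $2^e$) shows the original $M\in\Psi_1/2^e$.

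The only real work is the bookkeeping in the three steps — verifying that each entry genuinely ranges over the claimed arithmetic progression modulo $2^e$, and locating where $e\ge 3$ is used (namely Step~1) — but this is a routine adaptation of Lemma~\ref{lem:16enough}, so I do not anticipate a genuine obstacle; the conceptual content is simply that the ``$L$-direction'' of $\Psi_1$ is unrestricted. As an independent sanity check, the lemma also follows from the classical fact that $\Gamma_1(4)$ is freely generated by $\sm{1}{1}{0}{1}$ and $\sm{1}{0}{4}{1}$: being a subgroup of $\SL(2,\ZZ/2^e\ZZ)$ that contains the images of these two matrices, $\Psi_1/2^e$ must equal the image of $\Gamma_1(4)$, which is $\{M:a\equiv d\equiv 1,\ c\equiv 0\pmod 4\}$.
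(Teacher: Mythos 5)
Your proof is correct and follows the same approach the paper intends: the paper's proof of Lemma~\ref{lem:4enough} simply says to repeat the argument of Lemma~\ref{lem:16enough} with $L$ in place of $L^4$, and your three reduction steps are precisely that argument, with the right identification of where the hypothesis drops from modulus $16$ to modulus $4$ (Step 2: left-multiplication by arbitrary powers of $L$ lets $a$ range over a full residue class mod $4$ rather than mod $16$) and where $e\geq 3$ is used (Step 1, so that $4a$ has valuation exactly $2$). The closing ``sanity check'' via freeness of $\Gamma_1(4)$ on $L,R^4$ is valid but appeals to strong approximation for $\Gamma_1(4)$ mod $2^e$, which is essentially the content the explicit reduction proves, so it is better regarded as a consistency check against a classical fact than as an independent proof.
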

\begin{proof}
We have $L, R^4 \in \Gamma_1(4)$, and in fact $\Gamma_1(4)/4 = \Psi_1/4$.  The proof is almost exactly as in the last lemma:  assume $M:= \genmtx  \in \SL(2,\ZZ)/2^e$ satisfies $\rho_{4}(M) \in \Gamma_1(4)/4$, and write it as a word in $L$ and $R^4$.
\end{proof}

\begin{proof}[Proof of Theorem~\ref{thm:strongapprox}]
	Lemmas \ref{lem:coprimecombine}, \ref{lem:oddsurj}, and \ref{lem:16enough} combine to give the congruence obstructions for $\Psi_2$, while Lemmas \ref{lem:coprimecombine}, \ref{lem:oddsurj}, and \ref{lem:4enough} give the result for $\Psi_1$. 
\end{proof}

\subsection{Reciprocity obstructions for $\Psi_1$ and $\Psi_2$}
\label{sec:twoexamples}
We will now prove the next set of results from Section \ref{sec:results}. Proposition~\ref{prop:twoexamples} gives orbits of $\Psi_1$ and $\Psi_2$ which exhibit reciprocity obstructions.

\begin{proof}[Proof of Proposition~\ref{prop:twoexamples}]
    By Theorem~\ref{thm:strongapprox}, the congruence obstructions in $\Psi_1 \smcol{2}{3}$ occur modulo 4. Using $\Psi_1/4=\Gamma_1(4)/4$, it follows that the numerator has no congruence obstructions, and the missing squares result in a reciprocity obstruction.

    For $\Psi_2\smcol{3}{8}$, the congruence obstructions occur modulo 16. From the proof of Lemma \ref{lem:16enough}, it suffices to analyze the denominators of $\sm{1\pmod{16}}{0\pmod{4}}{0\pmod{4}}{1\pmod{16}}\smcol{3}{8}$, which comes out to be $0\pmod{4}$.
\end{proof}

Proposition~\ref{prop:psi1table} states that Table~\ref{table:psi1obstructions} still holds for orbits of $\Psi_1$.

\begin{proof}[Proof of Proposition~\ref{prop:psi1table}]
    By Theorem~\ref{thm:strongapprox}, the congruence obstructions in an orbit $\Psi_1 \smcol{x}{y}$ are controlled by $(x, y)\pmod{4}$, and an easy computation shows they are exactly as described in Table~\ref{table:psi1obstructions}. In particular, the reciprocity obstructions predicted remain reciprocity obstructions, and the table remains unchanged.
\end{proof}

If Table~\ref{table:psi1obstructions} does not predict a reciprocity obstruction (when squares are possible based on the congruence obstructions), then we do not have a proof that the orbit $\Psi_1\smcol{x}{y}$ \emph{does} represent squares. In particular, we do not know if Table~\ref{table:psi1obstructions} is \emph{complete} for $\Psi_1$. The best we can do is compute a large number of orbits, and verify that it appears to be complete.

In the numerators/denominators of $\Psi_1\smcol{x}{y}$ with $1\leq x, y\leq 200$, where a square was not ruled out by congruence obstructions and a reciprocity obstruction was not predicted by Table~\ref{table:psi1obstructions}, we found a square numerator/denominator below $400^2$ in all cases. Code to reproduce this output is found in \cite{GHSemigroup} with the function \texttt{test\_table1\_psi1\_many}. This data supports Conjecture~\ref{conj:psi1localglobal}, which predicts that Table~\ref{table:psi1obstructions} is complete for $\Psi_1$.

Finally, consider the orbit $\Psi_1\smcol{2}{3}$. By computing the numerators up to $10^7$, we observe that the final missing non-square is 10569, lending support to Conjecture~\ref{conj:twoexamples}. This computation can be recreated with \texttt{test\_psi1\_23orbit}.

\section{Connection to continued fractions}
\label{sec:ctd}

The final order of business is to prove Proposition~\ref{prop:psi12incontfrac} and the subsequent results, which connect the orbits of $\Psi_1,\Psi_2$ to continued fractions.

\begin{proof}[Proof of Proposition~\ref{prop:psi12incontfrac}]

Assume $\smcol{u}{v}\in\Psi_2\smcol{x}{y}$, and write
\[\smcol{u}{v}=L^{4b_0}R^{4b_1}L^{4b_2}R^{4b_3}\cdots L^{4b_{2m}}R^{4b_{2m+1}}\smcol{x}{y}=:W_1\smcol{x}{y},\]
for nonnegative integers $b_i$, where $b_i>0$ if $0<i<2m+1$. From the discussion at the start of Section~\ref{sec:results}, we have
\[\smcol{x}{y}=R^{a_1}L^{a_2}R^{a_3}\cdots R^{a_{2n-1}}\smcol{1}{0}=:W_2\smcol{1}{0},\]
Thus $\smcol{u}{v}=W_1W_2\smcol{1}{0}$, and the even continued fraction of $\frac{u}{v}$ is the concatenation of $W_1$ with $W_2$.

It is clear that these steps are reversible, i.e. it is an if and only if. The case of $\Psi_1\smcol{x}{y}$ follows in an analogous fashion.
\end{proof}

Relying on the even continued fraction is a mildly annoying technicality caused by working with the words in $L,R$. This can be removed by considering inverses of rational numbers.

\begin{proposition}\label{prop:psi2nomoreevencontfrac}
    Let $x<y$ be coprime nonnegative integers, and let $\frac{x}{y}=[0;a_1,a_2,\ldots,a_n]$ be a continued fraction expansion. For coprime nonnegative integers $u,v$, we have either $\smcol{u}{v}\in\Psi_2\smcol{x}{y}$ or $\smcol{v}{u}\in\Psi_2\smcol{x}{y}$ if and only if $\frac{u}{v}$ has a continued fraction of the form
    \[
    \frac{u}{v}=
        [4b_0;4b_1, 4b_2, \ldots, 4b_{m-1}, 4b_m+ a_1, a_2, \ldots, a_n],
    \]
    where the $b_i$ are nonnegative integers with $b_i>0$ if $1\leq i\leq m-1$.
\end{proposition}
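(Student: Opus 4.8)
The plan is to re-run the word‑concatenation argument from the proof of Proposition~\ref{prop:psi12incontfrac}, but dropping the hypothesis that the continued fraction of $\frac{x}{y}$ has even length, and to absorb the resulting parity mismatch into the extra clause ``$\smcol{v}{u}\in\Psi_2\smcol{x}{y}$''. The single structural input beyond Proposition~\ref{prop:psi12incontfrac} is a reflection symmetry of $\Psi_2$: writing $J=\sm{0}{1}{1}{0}$ and $\sigma(M)=JMJ$, the map $\sigma$ is an automorphism of $\GL(2,\ZZ)$ interchanging the generators $L^4=\sm{1}{4}{0}{1}$ and $R^4=\sm{1}{0}{4}{1}$, so $\sigma(\Psi_2)=\Psi_2$. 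Since $J\smcol{p}{q}=\smcol{q}{p}$ and $JMJ=\sigma(M)$, left‑multiplying an identity $M\smcol{x}{y}=\smcol{v}{u}$ (with $M\in\Psi_2$) by $J$ gives $\sigma(M)\smcol{y}{x}=\smcol{u}{v}$ with $\sigma(M)\in\Psi_2$, and the step is reversible; hence
\[
\smcol{v}{u}\in\Psi_2\smcol{x}{y}\ \Longleftrightarrow\ \smcol{u}{v}\in\Psi_2\smcol{y}{x}.
\]
We will use this together with $\frac{y}{x}=[a_1;a_2,\ldots,a_n]$, whose $L,R$‑word $L^{a_1}R^{a_2}L^{a_3}\cdots$ begins with an $L$, in contrast to the word $R^{a_1}L^{a_2}R^{a_3}\cdots$ for $\frac{x}{y}=[0;a_1,\ldots,a_n]$, which begins with an $R$; this $L$‑versus‑$R$ dichotomy is exactly what the ``either/or'' records.

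For the ``if'' direction, suppose $\frac{u}{v}=[4b_0;4b_1,\ldots,4b_{m-1},4b_m+a_1,a_2,\ldots,a_n]$ with the $b_i$ as specified. By the continued‑fraction--word dictionary \eqref{eqn:evencontfrac}, $\smcol{u}{v}=W'\smcol{e}$, where $W'=L^{4b_0}R^{4b_1}L^{4b_2}\cdots$ is the alternating word with exponent $4b_m+a_1$ in slot $m$, and the seed $\smcol{e}\in\{\smcol{1}{0},\smcol{0}{1}\}$ is determined by the parity of $m+n-1$. Factor $W'=WT$ by splitting slot $m$: let $W=L^{4b_0}R^{4b_1}\cdots\ell^{4b_m}$, with $\ell$ the slot‑$m$ letter, and let $T=\ell^{a_1}(\cdots)^{a_2}\cdots$ be the remaining word. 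Then $W$ is a word in $L^4,R^4$, so $W\in\Psi_2$. If $m$ is odd then $\ell=R$ and $T=R^{a_1}L^{a_2}\cdots$; as the parities of $m+n-1$ and $n$ agree, $\smcol{e}$ is also the seed realizing $[0;a_1,\ldots,a_n]$, so $T\smcol{e}=\smcol{x}{y}$ and $\smcol{u}{v}=W\smcol{x}{y}\in\Psi_2\smcol{x}{y}$. If $m$ is even then $\ell=L$ and $T=L^{a_1}R^{a_2}\cdots$, so $T\smcol{e}=\smcol{y}{x}$ and $\smcol{u}{v}=W\smcol{y}{x}$, whence $\smcol{v}{u}\in\Psi_2\smcol{x}{y}$ by the reflection principle. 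The disjunction holds in either case.

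For the ``only if'' direction, first suppose $\smcol{u}{v}=W\smcol{x}{y}$ with $W$ written in its unique reduced form as an alternating product of positive powers of $L^4$ and $R^4$ (with $W=I$ the empty product). Concatenating $W$ with the word $R^{a_1}L^{a_2}\cdots$ for $\frac{x}{y}$: if $W$ ends in a positive power of $R^4$, that power fuses with $R^{a_1}$ and the resulting word reads off a continued fraction $[4c_0;\ldots,4c_{k-1},4c_k+a_1,a_2,\ldots,a_n]$ of $\frac{u}{v}$; if $W$ ends in a positive power of $L^4$ (or $W=I$), there is no fusion and one gets $[4c_0;\ldots,4c_k,a_1,a_2,\ldots,a_n]=[4c_0;\ldots,4c_k,4\cdot0+a_1,a_2,\ldots,a_n]$. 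Both are of the required shape, the interior‑positivity conditions on the $b_i$ being inherited from the reduced form of $W$, and a parity count again confirms the seed of the concatenated word is correct. If instead $\smcol{v}{u}\in\Psi_2\smcol{x}{y}$, the reflection principle gives $\smcol{u}{v}=W\smcol{y}{x}$ for some such $W$, and concatenating $W$ with the word $L^{a_1}R^{a_2}\cdots$ for $\frac{y}{x}$ yields, by the analogous two subcases (fusion now when $W$ ends in a positive power of $L^4$), a continued fraction of $\frac{u}{v}$ again of the form $[4b_0;\ldots,4b_{m-1},4b_m+a_1,a_2,\ldots,a_n]$.

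The only point that is not purely mechanical is recognizing that the ``either/or'' is precisely the record of whether the $\Psi_2$‑word abuts the $\frac{x}{y}$‑word on a matching ($R$) letter or a mismatched ($L$) letter, with the mismatched case rescued by $\sigma(\Psi_2)=\Psi_2$. The main obstacle is therefore bookkeeping rather than conceptual: keeping the seed $\smcol{1}{0}$ versus $\smcol{0}{1}$ and the slot parities consistent across the degenerate cases — $m=0$ (where slot $m$ carries $L^0$), a vanishing $b_0$ or $b_m$, $a_n=1$, and $x=0$ (immediate, since $\Psi_2\smcol{0}{1}$ fixes its base vector, or by appeal to Corollary~\ref{cor:psi2and0mod4}).
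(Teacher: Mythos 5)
Your proof is correct in outline, but it takes a genuinely different and longer route than the paper's. The paper reduces Proposition~\ref{prop:psi2nomoreevencontfrac} to Proposition~\ref{prop:psi12incontfrac} in two short moves: first it observes that the stated continued‑fraction form is invariant under switching between the two CF expansions of $\frac{u}{v}$ and $\frac{x}{y}$ simultaneously, so one may assume the CF of $\frac{x}{y}$ is the even one; then it uses the elementary identity $\frac{1}{[c_0;c_1,\ldots,c_k]}=[0;c_0,c_1,\ldots,c_k]$ (for $c_0>0$) to adjust the parity of the CF of $\frac{u}{v}$ by passing to $\frac{v}{u}$, thereby landing directly in the even‑length setting of Proposition~\ref{prop:psi12incontfrac}. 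You instead re‑derive the concatenation argument from scratch, absorbing the parity mismatch by the reflection $\sigma(M)=JMJ$ with $J=\sm{0}{1}{1}{0}$, which swaps $L^4\leftrightarrow R^4$ and hence fixes $\Psi_2$ while giving the equivalence $\smcol{v}{u}\in\Psi_2\smcol{x}{y}\iff\smcol{u}{v}\in\Psi_2\smcol{y}{x}$. This matrix‑level symmetry is a perfectly serviceable substitute for the paper's continued‑fraction inversion identity — the two are really the same fact stated in different languages — but invoking Proposition~\ref{prop:psi12incontfrac} as a black box, as the paper does, dramatically shortens the write‑up by avoiding the slot/seed/parity bookkeeping you carry along. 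One minor inaccuracy in your side remark: $\Psi_2\smcol{0}{1}$ is a whole orbit, not a fixed vector; it is only $R^4$ that fixes $\smcol{0}{1}$. This does not affect your argument, since that case follows at once from Corollary~\ref{cor:psi2and0mod4} (or from the same word analysis with $n=0$), but the phrasing as written would not compile into a valid claim.
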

\begin{proof}
    If $x=0$, we must have $\frac{0}{1}=[0]$, and the condition of either $\smcol{u}{v}$ or $\smcol{v}{u}$ is in $\Psi_2\smcol{0}{1}$ is equivalent to either $\smcol{u}{v}$ or $\smcol{v}{u}$ is in $\Psi_2\smcol{1}{0}$. The condition that $\frac{u}{v}$ has a continued fraction involving only multiples of 4 is equivalent to either $\frac{u}{v}$ or $\frac{v}{u}$ has an even continued fraction involving only multiples of 4, and Proposition~\ref{prop:psi12incontfrac} finishes this case.
    
    Now assume $x>0$, and note that if $\frac{u}{v}$ takes the above form, then it will still take this form if we use the other continued fraction expansions of $\frac{u}{v}$ and $\frac{x}{y}$. Therefore we can assume that $n$ is odd, i.e. we took the even continued fraction expansion of $\frac{x}{y}$.
    
    If $c_0>0$, note that
    \[\frac{1}{[c_0;c_1,c_2,\ldots,c_k]}=[0;c_0,c_1,\ldots,c_k].\]
    In particular, if $\frac{u}{v}$ takes the assumed form, possibly replacing $\frac{u}{v}$ by $\frac{v}{u}$ allows us to assume the total length is even, and therefore we fall under the purview of Proposition~\ref{prop:psi12incontfrac}, proving the result.
\end{proof}

Corollary~\ref{cor:psi2and0mod4} follows immediately from Proposition~\ref{prop:psi2nomoreevencontfrac}.

Theorem~\ref{thm:zarembareciprocity} follows from checking the congruence obstructions for $\Psi_{\mathcal{A}}\smcol{x}{y}$ in the relevant cases, as follows.

\begin{lemma}\label{lem:zarembareicprocitydenom}
    Let $\mathcal{A}$ be a subset of $4\ZZ^+$ containing $4,8$, and let $x,y$ be odd. Then the congruence obstruction on a denominator $d$ of $\Psi_{\mathcal{A}}\smcol{x}{y}$ is $d\equiv x\text{ or }y\pmod{4}$.
\end{lemma}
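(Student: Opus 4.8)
The plan is to establish two halves. \emph{Necessity}: every denominator $d$ of $\Psi_{\mathcal{A}}\smcol{x}{y}$ satisfies $d\equiv x$ or $y\pmod 4$. \emph{No further obstruction}: conversely, for every modulus $m\geq 1$, every $d_0$ with $d_0\equiv x$ or $y\pmod 4$ is congruent modulo $m$ to some denominator in the orbit. Necessity is immediate: each generator $\sm{0}{1}{1}{a}$ with $a\in\mathcal{A}\subseteq 4\ZZ^+$ reduces modulo $4$ to $\sigma:=\sm{0}{1}{1}{0}$, so a word of length $n$ reduces to $\sigma^n$, and $\sigma^n\smcol{x}{y}$ is $\smcol{x}{y}$ or $\smcol{y}{x}$ modulo $4$.

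For the second half, the first step is to reduce to $\Psi_2$. Writing $\sm{0}{1}{1}{a}=R^a\sigma$ and using $\sigma R^a\sigma=L^a$, a word $g_{a_1}\cdots g_{a_n}$ in the generators equals $R^{a_1}L^{a_2}R^{a_3}\cdots$ when $n$ is even (hence an element of $\Psi_2$, as every $a_i\in 4\ZZ^+$) and equals such a word times $\sigma$ when $n$ is odd. Working modulo a fixed $m$, where all semigroup images are finite groups, the identities $g_4^2=R^4L^4$, $g_8g_4=R^8L^4$, $g_4g_8=R^4L^8$ give $R^4\equiv(g_8g_4)(g_4^2)^{-1}$ and $L^4\equiv(g_4^2)^{-1}(g_4g_8)$ as positive even-length words in $g_4,g_8$; hence the even-length words of $\Psi_{\mathcal{A}}$ already surject onto $\Psi_2/m$. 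Consequently, modulo $m$, the denominators arising from even-length words of $\Psi_{\mathcal{A}}$ are exactly the second coordinates of $M\smcol{x}{y}$ with $M$ running over $\Psi_2/m$, and those arising from odd-length words (say with last letter $g_4$) are the second coordinates of $M\smcol{y}{x+4y}$ with $M\in\Psi_2/m$.

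It remains to evaluate these using the explicit strong approximation for $\Psi_2$ (Theorem~\ref{thm:strongapprox}). Factor $m=\opart{m}\cdot 2^e$ with $\opart{m}$ odd, enlarging $m$ if necessary so that $e\ge 4$; then $\Psi_2/m\cong\SL_2(\ZZ/\opart{m})\times\Psi_2/2^e$. Modulo $\opart{m}$, the group $\SL_2(\ZZ/\opart{m})$ is transitive on primitive vectors (and $\smcol{x}{y},\smcol{y}{x+4y}$ are primitive since $\gcd(x,y)=1$), so the second coordinate attains every residue. Modulo $2^e$, one reads off from Theorem~\ref{thm:strongapprox} that $\Psi_2/2^e$ contains every $\sm{1}{0}{c}{1}$ with $4\mid c$; since $x,y$ are odd, applying these to $\smcol{x}{y}$ makes the second coordinate $cx+y$ run over all residues $\equiv y\pmod 4$, and applied to $\smcol{y}{x+4y}$ it runs over all residues $\equiv x\pmod 4$. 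Combining the two factors by the Chinese remainder theorem shows the even-length denominators realize every residue $\equiv y\pmod 4$ modulo $m$ and the odd-length denominators realize every residue $\equiv x\pmod 4$, which is exactly the claim. The step I expect to be the main obstacle is this $2$-adic analysis: one must carry the explicit description of $\Psi_2/2^e$ from Theorem~\ref{thm:strongapprox} through the orbit computation, keep track of the fact that the word-length parity rigidly pins the orbit vector modulo $4$ (even-length words give $\equiv\smcol{x}{y}$, odd-length words $\equiv\smcol{y}{x}$), and confirm that nothing beyond this mod-$4$ constraint survives modulo higher powers of $2$; the odd-modulus part is soft, being pure transitivity of $\SL_2$ on primitive vectors.
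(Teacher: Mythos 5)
Your proof is correct and follows essentially the same strategy as the paper: reduce to the even-length subsemigroup, observe that $R^4\equiv (g_8g_4)(g_4^2)^{-1}$ and $L^4\equiv (g_4^2)^{-1}(g_4g_8)$ modulo $m$ so that the even-length words surject onto $\Psi_2/m$ (the paper writes $C_8C_4(C_4C_4)^{-1}$ for the same trick), split the orbit by word-length parity, and invoke the explicit strong approximation for $\Psi_2$ (Theorem~\ref{thm:strongapprox}) to see that the only constraint is modulo $4$. Your final step reads the $2$-adic condition off the powers of $R^4$ together with transitivity of $\SL_2(\ZZ/\opart{m}\ZZ)$ on primitive vectors, whereas the paper cites the explicit presentation of $\Psi_2/16$ from Lemma~\ref{lem:16enough}; these are two phrasings of the same computation.
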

\begin{proof}
Write $C_a = \sm{0}{1}{1}{a}$, and $U = \sm{0}{1}{1}{0}$. We have that
    \[
    \Psi_\mathcal{A}^2 := \langle C_aC_b :a,b \in \mathcal{A} \rangle^+ = \langle UL^aR^bU : a,b \in \mathcal{A} \rangle^+ \subseteq U\Psi_2U = \Psi_2 \subseteq \SL(2,\ZZ).
    \]
 \textbf{Claim:} For any modulus $m$, $\Psi_2/m = \Psi_\mathcal{A}^2/m$.

    Indeed, let $m$ be a modulus.  Within $\SL(2,\ZZ/m\ZZ)$, the semigroup generated by a list of generators is the same as the group generated by those generators.  Therefore, $\Psi_\mathcal{A}^2/m$ contains
    \[
    C_8C_4 (C_4C_4)^{-1} = R^4, \quad
    (C_4C_4)^{-1} C_4C_8 = L^4.
    \]
    Hence $\Psi_2/m \subseteq \Psi_\mathcal{A}^2/m 
    \subseteq \Psi_2/m$, which proves the claim.

As a consequence, we know the congruence obstructions for $\Psi_\mathcal{A}^2$ are those of $\Psi_2$. A denominator of $\Psi_{\mathcal{A}}\smcol{x}{y}$ is a denominator of either $\Psi_{\mathcal{A}}^2\smcol{x}{y}$ or $\Psi_{\mathcal{A}}^2C_a\smcol{x}{y}$ for some $a\in\mathcal{A}$. Congruence obstructions for $\Psi_2$ occur modulo 16, and the proof of Lemma~\ref{lem:16enough} gives that 
\[\Psi_2/16=\left\{\lm{1}{4b}{4c}{1}\pmod{16}:0\leq b,c\leq 3\right\}.\]
Thus the denominators in $\Psi_{\mathcal{A}}^2\smcol{x}{y}$ are $y\pmod{4}$, and the denominators in $\Psi_{\mathcal{A}}^2C_a\smcol{x}{y}$ are $x\pmod{4}$, finishing the proof.    
\end{proof}

\begin{proof}[Proof of Theorem~\ref{thm:zarembareciprocity}]
By Lemma~\ref{lem:zarembareicprocitydenom}, the denominators of $\Psi_{\mathcal{A}}\smcol{x}{y}$ have congruence obstruction either $1\pmod{4}$ or $1\pmod{2}$, both of which admit squares. Thus, it suffices to show that the denominator cannot be a square.

Next, $\smcol{u}{v}\in\Psi_{\mathcal{A}}\smcol{x}{y}$ if and only if we can write the continued fraction for $\frac{u}{v}$ as
\[[0;b_1, b_2, \ldots, b_{n-1}, b_n+a_0, a_1, \ldots, a_n],\]
where $\frac{x}{y}=[a_0;a_1, a_2, \ldots, a_n]$ and $b_i\in\mathcal{A}$ for all $i$.

By Proposition~\ref{prop:psi2nomoreevencontfrac}, this implies that either $\smcol{u}{v}$ or $\smcol{v}{u}$ is in the orbit of $\Psi_2\smcol{x}{y}$ or $\Psi_2\smcol{y}{x}$ (the second case occurs when $a_0>0$, and one considers $\frac{y}{x}=[0;a_0,a_1,\ldots,a_n]$ instead). Due to the symmetry of $\Psi_2$, we see that if $v$ is a denominator in $\Psi_{\mathcal{A}}\smcol{x}{y}$, then it is either a numerator or denominator in $\Psi_2\smcol{x}{y}$, hence $\Psi\smcol{x}{y}$ as well. By Theorem \ref{thm:semigroupreciprocity} and the assumption that $\kron{x}{y}=-1$, $v$ is not a square, as claimed.
\end{proof}

Specializing Theorem~\ref{thm:zarembareciprocity} to $\Psi_{\mathcal{A}}\smcol{3}{5}$ gives Corollary~\ref{cor:psi2threefiveorbitcontfracwhydoesjamesusesuchlongnames}.  For the second equivalent phrasing, we need only observe that
\[L(\Psi_{\mathcal{A}} \smcol{0}{1}) = (2 \; 3) \Psi_{\mathcal{A}} \smcol{0}{1} = (0 \; 1) \Psi_{\mathcal{A}} \smcol{2}{3},\]
since $\Psi_{\mathcal{A}}$ is invariant under transpose. Finally, let's consider the more general Corollary~\ref{cor:bigcontfracnosquares}, which comes about by taking the orbits of $\smcol{3}{2+3k}$ for all $k\geq 0$.

\begin{proof}[Proof of Corollary~\ref{cor:bigcontfracnosquares}]
    We have $\frac{u}{v}=[0;4a_1, 4a_2, \ldots, 4a_n, k, 1, 2]$ if and only if $\smcol{u}{v}\in\Psi_{4\ZZ^+}\smcol{3}{2+3k}$. Following the final analysis of the proof of Lemma~\ref{lem:zarembareicprocitydenom}, we see that there are no congruence obstructions on the denominator. From the proof of Theorem~\ref{thm:zarembareciprocity}, it suffices to show that the orbits $\Psi\smcol{3}{2+3k}$ never admit squares in the numerator or denominator. This follows from an analysis of Table~\ref{table:psi1obstructions} with the four values of $k\pmod{4}$. The numerator is always $3\pmod{4}$, and thus not a square by congruence. The denominator is not a square if $k= 0, 3\pmod{4}$ by a congruence obstruction, and it is not a square if $k= 1, 2\pmod{4}$ by a reciprocity obstruction.
\end{proof}

All continued fractions of the form $[0;4a_1, 4a_2, \ldots, 4a_n, a_{n+1}, 1, 2]$ with denominator bounded by $B$ can be enumerated by a depth first search. This is implemented in the function \texttt{contfrac\_tail\_missing}, and parallelized in the method \texttt{cfracsearch} in \cite{GHSemigroup}. A lengthy computation up to $B=2\times 10^{13}$ (mostly done on 64 cores, for a rough total of 150000 CPU hours) yields fewer and fewer missing denominators, finally stopping around $7.97\times 10^{12}$. The final missing denominators found in each equivalence class modulo 4 are listed in Table~\ref{table:contfraclast}.

\begin{table}[hbt]
\centering
\caption{Non-square $d$ which are not denominators of numbers of the form $[0;4a_1, 4a_2, \ldots, 4a_n, a_{n+1}, 1, 2]$ up to $B=2\times 10^{13}$.}\label{table:contfraclast}
\begin{tabular}{|c|r|r|r|} 
\hline
$d\pmod{4}$ & Total missing & Largest missing & $\approx B/\text{Largest}$\\ \hline
0 & 2869016 & 2281020902160 & 8.77 \\ \hline
1 & 1028567 & 674124164325 & 29.67 \\ \hline
2 & 6654884 & 7968219670470 & 2.51 \\ \hline
3 & 9665 & 2308640415 & 8663.11\\ \hline
\end{tabular}
\end{table}

In particular, Conjecture \ref{conj:contfraceventuallyall} seems plausible.

\section{Subgroups of $\SL(2, \ZZ)$ with reciprocity obstructions}\label{section:groups}

We end with a few remarks about the case of groups (instead of semigroups).  In Section 4 of \cite{ApolloniusZaremba}, Kontorovich studies a thin orbit arising from Pythagorean triples, in particular from the group
\[\tilde{\Gamma}:=\left\langle\pm\lm{1}{2}{0}{1}, \pm\lm{1}{0}{4}{1}\right\rangle = \left\langle\pm L^2, \pm R^4\right\rangle.\]
This group sits between the groups generated by $\Psi_1$ and $\Psi_2$.  The denominators of $\Psi_2\smcol{3}{8}$ are restricted to be $0\pmod{4}$ and not square.  However,  $\tilde{\Gamma}\smcol{3}{8}$ can have square denominators:
\[R^4L^{-2}R^4L^{-2}\lmcol{3}{8}=\lmcol{75}{256}.\]

This discussion raises a question: is our restriction to semigroups essential?

The failure of our approach for groups is the inevitable involvement of the place at infinity.  Roughly speaking, quadratic reciprocity says, for non-negative numbers, that the behaviour of $\kron{\cdot}{x}$ is the same as the behaviour of $\kron{x}{\cdot}$.  Our semigroup orbits involve only non-negative integers.  However, $\kron{x}{-1}$ tracks the sign of $x$, and $\kron{-1}{x}$ tracks $\opart{x}\pmod{4}$.
The example above is instructive: $R^4L^{-2}\smcol{3}{8}=\smcol{-13}{-44}$, so both numbers have dipped into the negatives. At this point, we still have $\kron{3}{8}=-1=\kron{-13}{-44}$, but after one application of $L^{-2}$ we get $\smcol{75}{-44}$, where now $\kron{75}{-44}=1$.

\begin{question}
    Do there exist subgroups of $\SL(2, \ZZ)$ that exhibit reciprocity obstructions?
\end{question}

A response to this question would require a precise formulation of the notion of reciprocity obstruction.

\bibliographystyle{alpha}
\bibliography{refs}
\end{document}